\providecommand{\U}[1]{\protect\rule{.1in}{.1in}}
\newtheorem{theorem}{Theorem}
\newtheorem{corollary}[theorem]{Corollary}
\newtheorem{definition}[theorem]{Definition}
\newtheorem{Lemma}[theorem]{Lemma}
\newtheorem{proposition}[theorem]{Proposition}
\newtheorem{remark}[theorem]{Remark}
\newtheorem{hypothesis}[theorem]{Hypothesis}
\newenvironment{proof}[1][Proof]{\noindent\textbf{#1.} }{\ \rule{0.5em}{0.5em}}
\providecommand{\keywords}[1]
{
  \small	
  \textbf{\textit{Keywords:}} #1
}
\begin{document}
\title{Inviscid Limit for Stochastic Second-Grade Fluid Equations}
\date{}
\author{Eliseo Luongo$^{1}$ \\ $^1$ Scuola Normale Superiore, Piazza dei Cavalieri,7, 56126 Pisa, Italy \\ Email: eliseo.luongo@sns.it}
\maketitle

\begin{abstract}
  We consider in a smooth and bounded two dimensional domain the convergence in the $L^2$ norm, uniformly in time, of the solution of the stochastic second-grade fluid equations with transport noise and no-slip boundary conditions to the solution of the corresponding Euler equations. We prove, that assuming proper regularity of the initial conditions of the Euler equations and a proper behavior of the parameters $\nu$ and $\alpha$, then the inviscid limit holds without requiring a particular dissipation of the energy of the solutions of the second-grade fluid equations in the boundary layer.
\end{abstract}
\keywords{Inviscid limit; Turbulence; Transport noise; No-slip boundary conditions; Boundary layer;  Additive noise; Second-grade complex fluid}

\maketitle

\section{Introduction}\label{intro}
The second-grade fluid equations are a model for viscoelastic fluids, with two parameters: $\alpha>0$, corresponding to the elastic response, and $\nu>0$, corresponding to viscosity. Considering a constant density, $\rho=1$, their stress tensor is given by 
\begin{equation*}
    T=-pI+\nu A_1+\alpha^2 A_2-\alpha^2 A_1^2,
\end{equation*}
where \begin{align*}
    A_1=\frac{\nabla u+\nabla u^T}{2},\\
    A_2=\partial_t A_1+A_1\nabla u+\nabla u^T A_1,
\end{align*}
being $p$ the pressure and $u$ the velocity field.
Given this stress tensor, the equations of motion for an incompressible homogeneous fluid of grade 2 are given by
\begin{align}\label{second-grade system}
\begin{cases}
\partial_t v=\nu \Delta u-\operatorname{curl}(v)\times u+\nabla p+f\\
\operatorname{div}u=0\\
v=u-\alpha^2\Delta u\\
u|_{\partial D}=0\\ 
u(0)=u_0.
\end{cases}    
\end{align}
where $f$ describes some external forces, possibly stochastic, acting on the fluid, see \cite{dunn1974thermodynamics}, \cite{rivlin1997stress} for further details on the physics behind this system.
The analysis of the deterministic system started with \cite{cioranescu1984existence}. They proved global existence and uniqueness without restricting the problem to the two dimensional case. Setting, formally, $\alpha=0$ in equations \eqref{second-grade system} we can reduce the system to the well-known Navier-Stokes one:
\begin{align}\label{Navier-Stokes system}
\begin{cases}
\partial_t u=\nu \Delta u-u\cdot\nabla u+\nabla p+f\\
\operatorname{div}u=0\\
u|_{\partial D}=0\\ 
u(0)=u_0.
\end{cases}    
\end{align}
Thus \eqref{second-grade system} can be seen as a generalization of \eqref{Navier-Stokes system}. Moreover, in \cite{iftimie2002remarques}, it has been shown that second-grade fluid equations are a good approximation of the Navier-Stokes system. Due to these good properties of the system it is a legitimate question trying to understand if the second-grade fluid equations behave better than the Navier-Stokes ones in problems related to turbulence, like the inviscid limit for domain with boundary and no-slip boundary conditions. In fact, such question is far for being solved for system \eqref{Navier-Stokes system} also in the deterministic framework. Partial results are available: \begin{enumerate}
    \item Unconditioned results. They are based on strong assumptions about the flows. For example flows with radial symmetry \cite{lopes2008vanishing}, \cite{lopes2008vanishing2}, or flows with analytic boundary layers \cite{maekawa2014inviscid}, \cite{sammartino1998zero}.
    \item Conditioned results. They are based on stating some criteria about the behavior of the solutions of the Navier-Stokes equations in the boundary layer in order to prove the inviscid limit. This line of research started with the famous work by Kato \cite{kato1984remarks}, see \cite{constantin2015inviscid}, \cite{temam1997behavior}, \cite{wang2001kato} for other results. For what concern the Stochastic framework few results are available, see for example \cite{luongo2021inviscid} for a generalization of the Kato's results to the additive noise case and a wilder set of initial conditions. 
\end{enumerate}
The analysis of the inviscid limit for the deterministic second-grade fluid equations is a partially well-understood topic. In particular, in \cite{lopes2015approximation}, the authors show that the behavior of the system changes considering different scaling between $\nu$ and $\alpha^2$.

If we set, formally, $\nu=0$ in system \eqref{second-grade system} second-grade fluid equations reduce to the so-called Euler-$\alpha$ equations:
\begin{align}\label{euler alpha system}
\begin{cases}
\partial_t v=-\operatorname{curl}(v)\times u+\nabla p+f\\
\operatorname{div}u=0\\
v=u-\alpha^2\Delta u\\
u|_{\partial D}=0\\ 
u(0)=u_0.
\end{cases}    
\end{align}
This system models the averaged motion of an ideal incompressible fluid when filtering over spatial scales smaller than $\alpha$ and its well-posedness has been treated in \cite{marsden2000geometry},\cite{shkoller2000analysis}. 
Euler-$\alpha$ equations, formally, satisfies the condition of \cite[Theorem 3]{lopes2015approximation}. Therefore we can expect that the inviscid limit holds also in this framework. Indeed, this is true as has been showed in \cite{lopes2015convergence}. 

In this work, we will consider stochastic second-grade fluid equations and stochastic Euler-$\alpha$ equations with transport noise which scales with respect to the elasticity. We want to understand if the good behavior proved in \cite{lopes2015approximation} if $\nu=O(\alpha^2)$ and in \cite{lopes2015convergence} if $\nu=0$ is preserved also in this case. There are several motivations to consider transport noise, as the effect of small scales on large scales in fluid dynamics problems, see \cite{debussche2022second}, \cite{flaWas}, \cite{flandoli2022additive},  \cite{holm2015variational} for several discussions on this topic. A first issue related to the analysis of the inviscid limit in the case of the transport noise is the well-posedness of the systems. In fact the existence of strong probabilistic solution of such systems is outside the framework treated in \cite{razafimandimby2017grade} and \cite{razafimandimby2012strong}, thus we need to improve slightly these results thanks to the properties of the transport noise. In the following $\nu\geq 0$ and we will always speak of second-grade fluid equations even if $\nu=0.$

The paper is organized as follows. In Section \ref{results} we introduce the mathematical problem, we state our main theorems and we give some well-known results for the the Euler equations and the analysis of the stochastic second-grade fluid equations. In
Sections \ref{well posed} we prove that the stochastic second-grade fluid equations with transport noise and no-slip boundary conditions are well posed. In Section \ref{energy estimates}, thanks to the already proven well-posedness and Hypothesis \ref{Hypothesis inviscid limit} below we improve the energy estimates obtained in Section \ref{well posed} in order to get some estimates crucial for the proof of Theorem \ref{Main thm}. The proof of our main theorem on the inviscid limit occupies Section \ref{proof of main thm}. Lastly in Section \ref{additive noise} we add some remarks for the analysis of the additive noise case.
\section{Main Results}\label{results}
Let us start this section introducing some general assumptions which will be always adopted under our analysis even if not recalled.
\begin{hypothesis}\label{General Hypothesis}
\begin{itemize}
    \item $0<T<+\infty$.
    \item $D$ is a bounded, smooth, simply connected domain.
    \item $\left(\Omega,\mathcal{F},\mathcal{F}_t,\mathbb{P}\right)$ is a filtered probability space such that $(\Omega, \mathcal{F},\mathbb{P})$ is a complete probability space, $(\mathcal{F}_t)_{t\in [0,T]}$ is a right continuos filtration and $\mathcal{F}_0$ contains every $\mathbb{P}$ null subset of $\Omega$.
\end{itemize}
\end{hypothesis}
For square integrable semimartingales taking value in separable Hilbert spaces $U_1,\ U_2$ we will denote by
$[M, N]_t$ the quadratic covariation process. If $M, N$ take values in the same separable Hilbert space $U$ with orthonormal basis $u_i$, we will denote by $\langle\langle M,N\rangle\rangle_t=\sum_{i\in \mathbb N} [\langle M,u_i\rangle_U, \langle N,u_i\rangle_U]_t$.
For each $k\in \mathbb{N},\ 1\leq p\leq \infty$ we will denote by $L^p(D)$ and $W^{k,p}(D)$ the well-known Lebesgue and Sobolev spaces. We will denote by $C_{c}^{\infty}(D)$ the space of smooth functions with compact support and by $W^{k,p}_0(D)$ their closure with respect to the $W^{k,p}(D)$ topology.  If $p=2$, we will write $H^k(D)$ (resp. $H^k_0(D)$ ) instead of $W^{k,2}(D)$ (resp. $W^{k,2}_0(D)$). Let $X$ be a separable Hilbert space, denote by $L^p(\mathcal{F}_{t_0},X)$ the space of $p$ integrable random variables with values in $X$, measurable with respect to $\mathcal{F}_{t_0}$. We will denote by $L^p(0,T;X)$ the space of measurable functions from $[0,T]$ to $X$ such that $$\lVert u\rVert_{L^p(0,T;X)}:=\left(\int_0^T \lVert u(t) \rVert_X^p\ dt\right)^{1/p}<+\infty,\ \ 1\leq p<\infty$$
and obvious generalization for $p=\infty.$ For any $r,\ p\geq 1$, we will denote by $L^p(\Omega,\mathcal{F},\mathbb{P};L^r(0,T;X))$ the space of processes with values in $X$ such that \begin{enumerate}
    \item $u(\cdot,t)$ is progressively measurable.
    \item $u(\omega,t)\in X$ for almost all $(\omega,t)$ and \begin{align*}
        \mathbb{E}\left[\lVert u(\omega,\cdot)\rVert_{L^r(0,T;X)}^p\right]<+\infty.
    \end{align*}
    Obvious generalizations for $p=\infty$ or $r=\infty$. 
\end{enumerate}
 Set
$$H=\{f \in L^2(D;\mathbb{R}^2),\ \operatorname{div}f=0,\ f\cdot n|_{\partial D}=0\},\ V=H_0^1(D;\mathbb{R}^2)\cap H,\ D(A)=H^2(D;\mathbb{R}^2)\cap V.  $$ Moreover we introduce the vector space $$W=\{u\in V:\ \operatorname{curl}(u-\alpha^2\Delta u)\in L^2(D;\mathbb{R}^2) \} $$ with norm $\lVert{u}\rVert_W^2=\lVert u\rVert^2+\alpha^2\lVert \nabla u\rVert_{L^2(D;\mathbb{R}^2)}^2+\lVert \operatorname{curl}(u-\Delta u)\rVert_{L^2(D)}^2.$ It is well-known, see for example \cite{cioranescu1984existence}, that we can identify $W$ with the space 
$$\hat{W}=\{u\in H^3(D;\mathbb{R}^2)\cap V \}. $$ Moreover there exists a constant such that \begin{align}\label{equivalence H3-W}
    \lVert u\rVert_{H^3}^2\leq C\left(\lVert u\rVert^2+\lVert \nabla u\rVert_{L^2(D;\mathbb{R}^2)}^2+\lVert \operatorname{curl}(u-\Delta u)\rVert^2_{L^2(D)} \right).
\end{align} We denote by $\langle\cdot,\cdot\rangle$ and $\lVert\cdot\rVert$ the inner product and the norm in $H$ respectively. Other norms and scalar products will be denoted with the proper subscript. On $V$ we introduce the norm $\lVert u\rVert_V^2=\lVert u\rVert^2+\alpha^2\lVert \nabla u\rVert_{L^2(D;\mathbb{R}^2)}^2.$ We will shortly denote by $\lVert u\rVert_*=\lVert \operatorname{curl}(u-\alpha^2\Delta u)\rVert_{L^2(D)}.$ Obviously the following inequality holds for $u\in V$, where $C_p$ is the Poincarè constant associated to $D$,
\begin{align}\label{equivalence H1-V}
    \frac{\lVert u\rVert_V^2}{\alpha^2+C_p^2}\leq \lVert \nabla u\rVert_{L^2(D;\mathbb{R}^2)}^2\leq \frac{\lVert u\rVert_V^2}{\alpha^2}
\end{align}
Denote by $P$ the linear projector of $L^2\left(D;\mathbb{R}^2\right)$ on $H$ and define the unbounded linear operator $A:D(A)\subseteq H\rightarrow H$ by the identity \begin{align}\label{definition of A}
    \langle A v, w\rangle=\langle \Delta v, w \rangle_{L^2(D;\mathbb{R}^2)}
\end{align}
for all $v \in D(A),\ w \in H$. $A$ will be called the Stokes operator. It is well-known (see for example \cite{temam2001navier}) that $A$ is self-adjoint, generates an analytic semigroup of negative type on $H$ and moreover $V=D\left(\left(-A)^{1/2}\right)\right).$
Denote by $\mathbb{L}^{4}$ the space $L^{4}\left(  D,\mathbb{R}^{2}\right)
\cap H$, with the usual topology of $L^{4}\left(  D,\mathbb{R}^{2}\right)  $.
Define the trilinear, continuous form $b:\mathbb{L}^{4}\times V\times\mathbb{L}%
^{4}\rightarrow\mathbb{R}$ as%
\begin{align}\label{definition of b}
b\left(  u,v,w\right)  =\langle u, P(\nabla v w)\rangle.    
\end{align}

Now we introduce some assumptions on the stochastic part of the system.
\begin{hypothesis}\label{hypothesis well-posedness}
\begin{itemize}
    \item $K$ is a (possiblly countable) set of indexes. 
    \item $\sigma_k\in W^{1,\infty}(D;\mathbb{R}^2)\cap V$ satisfying \begin{align*}
        \sum_{k\in K}\lVert \sigma_k\rVert_{W^{1,\infty}}^2<+\infty.
    \end{align*}
    \item $u_0\in \cap_{p\geq 2}L^p(\mathcal{F}_0,W)$.
    \item $\{W^k_t\}_{k\in K}$ is a sequence of real, independent Brownian motions adapted to $\mathcal{F}_t$.
\end{itemize}
\end{hypothesis}
Let us consider the stochastic second-grade fluid equations below. Some physical motivations for the introduction of transport noise in fluid dynamic models can be found in \cite{flaWas}, \cite{holm2015variational}.
\begin{align*}
\begin{cases}
dv=(\nu \Delta u-\operatorname{curl}(v)\times u+\nabla p)dt+\sum_{k\in K} (\sigma_k\cdot \nabla u+\nabla\Tilde{p}_k) \circ dW^k_t\\
\operatorname{div}u=0\\
v=u-\alpha^2\Delta u\\
u|_{\partial D}=0\\ 
u(0)=u_0.
\end{cases}    
\end{align*}
We need to add the additional pressure term $\sum_{k\in K} \nabla\Tilde{p}_k \circ dW^k_t$, the so-called turubulent pressure, in the system above in order to deal with the fact that $\sum_{k\in K} \sigma_k\cdot \nabla u \circ dW^k_t$ is not divergence free, therefore an additional martingale term orthogonal to $H$ must be added to make the system feasible.\\
Introducing the Stokes operator, the previous equation can be rewritten as 
\begin{align}\label{Stratonovich}
\begin{cases}
d(u-\alpha^2 Au)=(\nu A u-P(\operatorname{curl}(v)\times u))dt+\sum_{k\in K} P(\sigma_k\cdot \nabla u) \circ dW^k_t\\
v=u-\alpha^2\Delta u\\
u(0)=u_0
\end{cases}    
\end{align}
or the corresponding It\^o form
\begin{align}\label{Ito}
\begin{cases}
d(u-\alpha^2 Au)& =(\nu A u-P(\operatorname{curl}(v)\times u))dt+\sum_{k\in K} P(\sigma_k\cdot \nabla u)  dW^k_t\\ & +\frac{1}{2}\sum_{k\in K} P(\sigma_k\cdot \nabla((I-\alpha^2A)^{-1}P(\sigma_k\cdot \nabla u)))dt\\
v=u-\alpha^2\Delta u\\
u(0)=u_0.
\end{cases}    
\end{align}
Indeed each of the Stratonovich integrals in equation \eqref{Stratonovich} can be rewritten, thanks to the Stratonovich-It\^o corrector associated to previous equation, in the following form: \begin{align}
    P(\sigma_k\cdot\nabla u)\circ dW^k_t&= P(\sigma_k\cdot\nabla u) dW^k_t+\frac{1}{2}d[P(\sigma_k\cdot\nabla u),W^k]_t \notag\\&= P(\sigma_k\cdot\nabla u) dW^k_t+\frac{1}{2}P\left(\sigma_k\cdot \nabla d[ u,W^k]_t\right) \notag\\ & =P(\sigma_k\cdot\nabla u) dW^k_t+\frac{1}{2}P\left(\sigma_k\cdot \nabla d\left[ \int_0^.(I-\alpha^2A)^{-1}\sum_{j\in K}P(\sigma_j\cdot\nabla u)dW^j_s,W^k\right]_t\right) \notag\\&= P(\sigma_k\cdot\nabla u) dW^k_t+\frac{1}{2}P\left(\sigma_k\cdot \nabla \left((I-\alpha^2 A)^{-1}P(\sigma_k\cdot \nabla u)\right)\right) dt. \label{Ito Strat corrector}
\end{align}
We denote by $F(u)=\frac{1}{2}\sum_{k\in K} P(\sigma_k\cdot \nabla((I-\alpha^2A)^{-1}P(\sigma_k\cdot \nabla u)))$ and $G^k(u)=P(\sigma_k\cdot \nabla u)$. By Corollary \ref{regularity of G^k, F} below \begin{align*}
    F\in \mathcal{L}(V;H\cap H^1(D;\mathbb{R}^2)),\ \ G^K\in \mathcal{L}(V;H).
\end{align*} 
\begin{definition}\label{weak solution}
A stochastic process with weakly continuous trajectories with values in $W$
is a weak solution of equation (\ref{Ito}) if
$$
u \in L^p(\Omega,\mathcal{F},\mathbb{P};L^{\infty}(0,T;W)),\ \forall p\geq 2.
$$
and $\mathbb{P}-a.s.$ for every $t\in [0,T]$ and $\phi \in W$  we have
\begin{align*}
    &\langle u(t)-u_0,\phi\rangle_V+\int_0^t \nu \langle \nabla u(s),\nabla \phi\rangle_{L^2(D;\mathbb{R}^2)}+ \langle \operatorname{curl}(u(s)-\alpha^2\Delta u(s))\times u(s), \phi\rangle_{L^2(D)} ds\\ & =\int_0^t \langle F(u(s)),\phi \rangle ds +\sum_{k\in K}\int_0^t \langle G^k(u(s)),\phi\rangle dW^k_s.
\end{align*}
\end{definition}

\begin{theorem}\label{Thm well posed}
Under Hypothesis \ref{General Hypothesis}-\ref{hypothesis well-posedness}, equation (\ref{Ito}) has a unique solution in the sense of definition \ref{weak solution}. Moreover, almost surely, the stochastic process $u$ has $V$ continuous paths.
\end{theorem}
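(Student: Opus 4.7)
The plan is to establish existence via a Galerkin scheme combined with compactness, and uniqueness via an energy estimate on the difference of two solutions (closed by Grönwall).

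First I would introduce a Galerkin basis adapted to the operator $I - \alpha^2 A$: let $\{e_k\}_{k\geq 1}\subset W$ be the eigenfunctions of the Stokes operator $A$, and set $W_n=\mathrm{span}(e_1,\dots,e_n)$. Projecting equation \eqref{Ito} onto $W_n$ gives a finite-dimensional It\^o SDE for $u^n(t)$ in $W_n$; since $(I-\alpha^2 A)$ is boundedly invertible on $W_n$ and all the nonlinearities are locally Lipschitz there, one obtains a unique maximal strong solution by classical SDE theory. Global existence on $[0,T]$ will follow a posteriori from the uniform energy bounds.

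Second, and this is the crux, I would derive uniform estimates in $L^p(\Omega;L^\infty(0,T;W))$. Because the system is driven by transport noise, it is cleaner to work with the Stratonovich formulation \eqref{Stratonovich}, in which the diffusion operators $u\mapsto P(\sigma_k\cdot\nabla u)$ behave like derivations. The plan is:
\begin{itemize}
\item Apply It\^o's formula to $\|u^n\|_V^2=\langle v^n,u^n\rangle$. The term $\langle \operatorname{curl}(v^n)\times u^n,u^n\rangle$ vanishes by the standard identity, $\nu\|\nabla u^n\|^2$ is a good sign, and the sum of the martingale quadratic variation and the It\^o corrector $2\langle F(u^n),u^n\rangle_V$ collapses, after using the Stratonovich-It\^o relation \eqref{Ito Strat corrector} and the anti-symmetry of transport operators, to a term controlled by $\sum_k\|\sigma_k\|_{W^{1,\infty}}^2\|u^n\|_V^2$. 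Combined with BDG and Grönwall this yields the $V$-bound.
\item Apply It\^o's formula to $\|u^n\|_*^2=\|\operatorname{curl}(v^n)\|_{L^2}^2$. Writing the equation for $q^n=\operatorname{curl}(v^n)$, the drift is a transport along $u^n$ plus a $\nu$-dissipative term plus stretching terms that vanish in $2D$; the noise becomes a transport along $\sigma_k$ plus a commutator $[\operatorname{curl},\sigma_k\cdot\nabla]$ acting on $u^n$ (or $v^n$), which is controlled by $\|\sigma_k\|_{W^{1,\infty}}$. Again the It\^o corrector and the noise quadratic variation combine into a term bounded by $\sum_k\|\sigma_k\|_{W^{1,\infty}}^2$ times lower-order norms, and Grönwall closes the estimate.
\end{itemize}
The hardest point is precisely this second step: producing the correct cancellations in the $\|\cdot\|_*$ norm and controlling the commutator terms only with the $W^{1,\infty}$ regularity assumed on $\sigma_k$.

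Third, for existence I would extract a subsequence from $\{u^n\}$ converging weak-$*$ in $L^p(\Omega;L^\infty(0,T;W))$; using the equation to bound $u^n$ in, say, $C^\gamma([0,T];H^{-s})$ for some small $\gamma,s>0$, the Aubin--Lions lemma plus a Skorokhod/Jakubowski representation gives a.s. strong convergence in $L^2(0,T;V)$ on a new probability space, which is enough to pass to the limit in the nonlinear term $P(\operatorname{curl}(v)\times u)$ (bilinear and bounded from $W\times V$ into $V'$) and in the martingale terms. This produces a martingale solution in the sense of Definition \ref{weak solution}.

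Finally, for pathwise uniqueness let $u_1,u_2$ be two solutions with the same initial datum and set $w=u_1-u_2$. Applying It\^o to $\|w\|_V^2$, the transport-noise contributions cancel as in the a priori estimate, while the difference of the nonlinearities is bounded by $C\bigl(\|u_1\|_W+\|u_2\|_W\bigr)\|w\|_V^2$; since $u_1,u_2\in L^\infty(0,T;W)$ a.s., Grönwall forces $w\equiv 0$. Combined with existence of a martingale solution, the Gy\"ongy--Krylov criterion promotes weak existence plus pathwise uniqueness to a unique strong probabilistic solution on the original filtered space. The $V$-continuity of trajectories then follows from the It\^o energy identity for $\|u(t)\|_V^2$, which holds because $u\in L^\infty(0,T;W)$ makes all drift terms integrable in $V$.
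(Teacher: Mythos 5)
Your overall architecture (Galerkin, uniform $V$- and $W$-energy bounds, passage to the limit, pathwise uniqueness by a weighted Gr\"onwall on $\|w\|_V^2$) matches the paper's, and your uniqueness step is essentially identical to the one given in Section \ref{well posed}. The existence part, however, takes a genuinely different route. You propose the classical compactness chain (Aubin--Lions, Skorokhod/Jakubowski, martingale solution, then Gy\"ongy--Krylov), whereas the paper deliberately avoids changing the probability space: it uses Breckner's direct method, proving in Lemma \ref{crucial lemma} that $1_{t\le\tau_M}(u^N-u)\to 0$ strongly in $L^2(\Omega;L^2(0,T;V))$ via an exponential weight $\sigma(t)=\exp(-\eta_1 t-\eta_2\int_0^t\|u\|_W^2\,ds)$ and a weak--strong pairing argument, and only then identifies $\hat B^*=\hat B(u,u)$. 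Your route is standard and would work, but it buys less: the paper's argument yields, as a byproduct, mean-square convergence of the whole Galerkin sequence (Theorem \ref{theorem convergence galerkin}), which is not decorative --- it is used again in Lemma \ref{thm energy inequality} to transfer the scaled energy identities to the limit solution, and hence feeds directly into the proof of the inviscid limit. If you go through a Skorokhod representation you would have to recover that quantitative convergence separately.

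Two concrete cautions on your sketch. First, you take the Stokes eigenfunctions as Galerkin basis; the paper instead uses the basis of Lemma \ref{lemma eigenfunctions}, orthonormal in $W$ and orthogonal in $V$ with $\langle v,e_i\rangle_W=\lambda_i\langle v,e_i\rangle_V$. This is what lets one multiply the $i$-th component equation by $\lambda_i$ and obtain a clean It\^o formula for $\|u^N\|_W^2=\|u^N\|_V^2+\|u^N\|_*^2$ in Lemma \ref{energy in W}; with the Stokes basis the finite-dimensional projection is not orthogonal in $W$ and the $\|\cdot\|_*$ estimate picks up uncontrolled projection commutators. Second, your claim that the corrector and quadratic variation in the $\|\cdot\|_*$ estimate reduce to ``lower-order norms'' is optimistic: the commutator terms are of size $\|\sigma_k\|_{W^{1,\infty}}\|u^N\|_{H^2}$, and one must interpolate $\|u^N\|_{H^2}^2\lesssim\|\nabla u^N\|_{L^2}\|u^N\|_{H^3}$ and absorb an $\epsilon\alpha^{-2}\|u^N\|_*^2$ either into the $\nu$-dissipation (for $\nu>0$) or by a further Gr\"onwall (for $\nu=0$), exactly as in estimates \eqref{estimate 2}--\eqref{estimate 3} of the paper. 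Finally, $\hat B$ maps $W\times V$ into $W^*$, not into $V'$ as you state; the limit identification must be carried out in the $W^*$--$W$ duality.
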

\begin{remark}\label{weak integrability}
    Actually we can weaken the integrability assumption of $u_0$ with respect to $\mathbb{P}$ in order to get less integrable solution, but regular enough to prove that the inviscid limit holds. Indeed $u_0\in L^4(\mathcal{F}_0,W)$ is the minimal assumption to prove either the well-posedness, see \cite{breckner2000galerkin} and Section \ref{Sec Galerkin approximation} below, and the the inviscid limit, see Sections \ref{energy estimates} and \ref{proof of main thm}. However, we prefer to not stress this assumption in order to make our results comparable to \cite{razafimandimby2012strong}.
\end{remark}
As stated in Section \ref{intro}, the proof of Theorem \ref{Thm well posed} will be the object of Section \ref{well posed}. Usually, in stochastic analysis, the well-posedness of a stochastic partial differential equation is obtained considering some approximating sequence, $\{u^N\}_{N\in \mathbb{N}}$, which solves an approximate equation in the original probability space and showing the tightness of their law in some spaces of functions. Then, by Prokhorov's theorem and Skorokhod's representation theorem, one can find an auxiliary probability space and a solution of the limit equation in this auxiliary probability space, $u$. Lastly, by a Gyongy-Krylov argument, one can recover that the limit process belongs to the original probability space and that the approximating sequence converge in probability to $u$.  See \cite{bensoussan1995stochastic}, \cite{capinski1994stochastic}, \cite{flaWas} for some examples of this method. Here, we follow a different, perhaps, more direct approach introduced by Breckner in \cite{breckner2000galerkin} for Navier-Stokes equations with multiplicative noise with particular regularity properties, but well-suited to treat transport noise, which a priori does not satisfy the general assumptions of \cite[Section 2]{breckner2000galerkin}. This approach uses, in particular, the properties of stopping times, some basic convergence principles from functional analysis and some properties of fluid dynamic non-linearities. Therefore, even if the results of \cite{breckner2000galerkin} were related to Navier-Stokes equations, this approach can be applied also to other fluid dynamic models, see \cite{razafimandimby2012strong}, \cite{carigi2023dissipation} for some examples to different fluid dynamic systems. An important byproduct of this way of proceed is that the approximations converge in mean square to the solution of the second-grade fluid equations, see Theorem \ref{theorem convergence galerkin} below. This fact will be crucial in order to obtain some apriori estimates on the solution, see Lemma \ref{thm energy inequality} below. 

Now we move to consider the inviscid limit problem and introduce a new set of hypotheses.
\begin{hypothesis}\label{Hypothesis inviscid limit}
\begin{itemize}
    \item $\nu=O(\alpha^2),\ \Tilde{\nu}=O(\alpha^2)$.
    \item $\overline{u}_0\in H^s(D;\mathbb{R}^2)\cap H$ for some $s\geq 3.$
    \item \begin{align}
    \mathbb{E}\left[\lVert u^{\alpha}_0-\bar{u}_0\rVert^2\right]\rightarrow 0 \label{L^2 conv initial};\\
    \mathbb{E}\left[\alpha^2\lVert \nabla u^{\alpha}_0\rVert_{L^2(D;\mathbb{R}^2)}^2\right]=o(1) \label{H^1 behav initial};\\
    \mathbb{E}\left[\alpha^6\lVert u^{\alpha}_0\rVert_{H^3(D;\mathbb{R}^2)}^2\right]=O(1) \label{H^3 behav initial}.
\end{align}
    
\end{itemize}
\end{hypothesis}
Let us consider the family of equations
\begin{align}\label{Ito Scaled}
\begin{cases}
d(u^{\alpha}-\alpha^2 Au^{\alpha})& =(\nu A u^{\alpha}-P(\operatorname{curl}(v^{\alpha})\times u^{\alpha}))dt+\sqrt{\Tilde{\nu}}\sum_{k\in K} P(\sigma_k\cdot \nabla u^{\alpha})  dW^k_t\\ & +\frac{\Tilde{\nu}}{2}\sum_{k\in K} P(\sigma_k\cdot \nabla((I-\alpha^2A)^{-1}P(\sigma_k\cdot \nabla u^{\alpha})))dt\\
v^{\alpha}=u^{\alpha}-\alpha^2\Delta u^{\alpha}\\
u^{\alpha}(0)=u_0^{\alpha},
\end{cases}    
\end{align}
where $\sigma_k$ are independent from $\nu,\  \Tilde{\nu},\ \alpha$ and $u^{\alpha}_0$ are random variable satisfying the assumptions of Theorem \ref{Thm well posed}. Energy relations and the behavior of the $H^3$ norm of $u^{\alpha}$ play a crucial role in the analysis of the inviscid limit in the deterministic framework, see Equation (3.2) and Equation (3.7) in \cite{lopes2015approximation}. If we want to have some hope of replicating the approach of \cite{lopes2015approximation} we need some estimates in that direction. This is exactly what happens. Indeed, under Hypothesis \ref{Hypothesis inviscid limit},  Equation (3.2) and Equation (3.7) in \cite{lopes2015approximation} continue to hold in the stochastic framework, see Lemma \ref{thm energy inequality} below. Therefore there is some hope to generalize the results of \cite{lopes2015approximation}, \cite{lopes2015convergence} to our stochastic framework. Now, let us consider the Euler equations
\begin{align}\label{Euler equations}
    \begin{cases}
        \partial_t \bar{u}+\nabla \bar{u}\cdot \bar{u}+\nabla p=0 \ (x,t)\in D\times(0,T)\\ 
        \operatorname{div}\bar{u}=0 \\ 
        \bar{u}\cdot n|_{\partial D}=0 \\ 
        \bar{u}(0)=\bar{u}_0.
    \end{cases}
\end{align}\label{Euler}
\begin{definition}\label{Well Posed Euler }
Given $\bar{u}_0\in H,$ we say that $\bar{u}\in C(0,T;H)$ is a weak solution of equation (\ref{Euler equations}) if for every $\phi \in C^{\infty}_0([0,T]\times D)\cap C^1([0,T];H)$ \begin{align*} \langle \bar{u}(t),\phi(t)\rangle=\langle \bar{u}_0,\phi (0)\rangle+\int_0^t\langle \bar{u}(s),\partial_s\phi(s)\rangle ds +\int_0^t b(\bar{u}(s),\phi(s),\bar{u}(s)) ds
\end{align*}
for every $t \in[0,T]$ and the energy inequality \begin{align*}
    \lVert{\bar{u}(t)}\rVert^2\leq\lVert{\bar{u}_0}\rVert^2
\end{align*}
holds.
\end{definition}
For what concern the well posedness of the Euler equations the following results hold true, see \cite{kato1984nonlinear}, \cite{temam1974euler}.
\begin{theorem}\label{Kato classiche}
Fix $T>0,\ s\geq 3$. Let $\bar{u}_0\in H^s(D;\mathbb{R}^2)\cap H$. Then there exist a unique weak solution of (\ref{Euler equations}) with initial condition $\bar{u}_0$ such that $$ \bar{u}\in C([0,T];H^s(D;\mathbb{R}^2))\cap C^1([0,T];H^{s-1}(D;\mathbb{R}^2))$$
and $\lVert{ \bar{u}(t)}\rVert=\lVert \bar{u}_0\rVert,\ \forall t\in [0,T].$
\end{theorem}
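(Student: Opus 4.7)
The plan is to follow the classical Kato--Temam strategy, based on the observation that in two dimensions the vorticity $\omega = \operatorname{curl} \bar{u}$ is transported by the flow, which yields an a priori $L^\infty$ bound on $\omega$. I would introduce an approximating sequence $\bar{u}^\varepsilon$ obtained either by vanishing viscosity (solutions of Navier--Stokes with small viscosity $\varepsilon$ and matching boundary condition $\bar{u}^\varepsilon \cdot n|_{\partial D}=0$, i.e.\ the free-boundary/Navier type, so as to keep the vorticity transport structure well behaved) or by a spectral Galerkin scheme built on divergence-free eigenfunctions. In either case one obtains smooth global solutions and the limit $\varepsilon\to 0$ will give a weak solution of \eqref{Euler equations}.

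The core of the argument consists of two a priori estimates. First, testing the equation against $\bar{u}^\varepsilon$ and using the antisymmetry $b(\bar{u}^\varepsilon,\bar{u}^\varepsilon,\bar{u}^\varepsilon)=0$ gives $\|\bar{u}^\varepsilon(t)\| = \|\bar{u}_0\|$ in the limit (with an $\varepsilon$-defect which vanishes). Second, higher regularity is obtained from the vorticity equation $\partial_t \omega^\varepsilon + \bar{u}^\varepsilon\cdot\nabla\omega^\varepsilon = 0$ (modulo a dissipative term), which gives $\|\omega^\varepsilon(t)\|_{L^p}=\|\omega_0\|_{L^p}$ for all $p\in[1,\infty]$ uniformly in $\varepsilon$. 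To propagate $H^s$ regularity with $s\geq 3$, I would differentiate the equation $s$ times and use Kato--Ponce commutator estimates to bound
\begin{equation*}
\frac{d}{dt}\|\bar{u}^\varepsilon\|_{H^s}^2 \leq C \|\nabla \bar{u}^\varepsilon\|_{L^\infty}\|\bar{u}^\varepsilon\|_{H^s}^2,
\end{equation*}
and then invoke the Sobolev embedding $\|\nabla \bar{u}^\varepsilon\|_{L^\infty}\lesssim\|\bar{u}^\varepsilon\|_{H^s}$ (valid for $s\geq 3$ in dimension two), so that Gronwall gives a uniform $H^s$ bound on $[0,T]$. The main technical obstacle is handling the pressure and the commutators with the Leray projector in the bounded domain $D$; this is where the smoothness of $\partial D$ is used, via Agmon--Douglis--Nirenberg elliptic estimates for the Stokes problem, to absorb the pressure contribution into lower-order terms.

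With the uniform bound $\bar{u}^\varepsilon\in L^\infty(0,T;H^s)$ in hand, I would extract a weak-$\ast$ limit $\bar{u}$ and use Aubin--Lions (noting $\partial_t \bar{u}^\varepsilon$ is uniformly bounded in $L^\infty(0,T;H^{s-1})$ from the equation) to pass to the limit in the nonlinearity and obtain a weak solution in the sense of Definition~\ref{Well Posed Euler }. The time regularity $C([0,T];H^s)\cap C^1([0,T];H^{s-1})$ is then recovered from the equation itself (continuity in $H^{s-1}$) together with the weak-continuity in $H^s$ upgraded to strong continuity by the standard argument that $t\mapsto\|\bar{u}(t)\|_{H^s}$ is continuous: this follows from an energy-type identity at the $H^s$ level, which uses $s\geq 3$.

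For uniqueness, if $\bar{u}_1,\bar{u}_2$ are two solutions with the same initial datum, I would set $w=\bar{u}_1-\bar{u}_2$, subtract the equations, and test against $w$. Using $b(\bar{u}_1,w,w)=0$ (from the divergence-free and no-penetration properties) and $|b(w,\bar{u}_2,w)|\leq \|\nabla\bar{u}_2\|_{L^\infty}\|w\|^2$, which is finite since $\bar{u}_2\in H^s\hookrightarrow W^{1,\infty}$, Gronwall yields $w\equiv 0$. Finally, the energy identity $\|\bar{u}(t)\|=\|\bar{u}_0\|$ follows by the same computation applied to the limit solution, which is now smooth enough to justify the chain rule rigorously.
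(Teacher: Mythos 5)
The paper does not actually prove this theorem: it is quoted as a classical result with references to Kato--Lai and Temam, so there is no in-paper argument to compare yours against. Judged on its own, your sketch follows the standard classical strategy (approximation, energy conservation, vorticity transport, $H^s$ energy estimates, compactness, Gronwall uniqueness), and most of the ingredients are the right ones.

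There is, however, one genuine gap in the central a priori estimate. The theorem asserts existence on $[0,T]$ for an \emph{arbitrary} fixed $T$, i.e.\ global regularity. Your closure of the $H^s$ estimate substitutes $\lVert \nabla \bar{u}^\varepsilon\rVert_{L^\infty}\lesssim \lVert \bar{u}^\varepsilon\rVert_{H^s}$ into $\frac{d}{dt}\lVert \bar{u}^\varepsilon\rVert_{H^s}^2\leq C\lVert\nabla\bar{u}^\varepsilon\rVert_{L^\infty}\lVert\bar{u}^\varepsilon\rVert_{H^s}^2$, which yields $y'\leq Cy^{3/2}$ for $y=\lVert\bar{u}^\varepsilon\rVert_{H^s}^2$; this is a Riccati-type inequality whose solutions blow up in finite time, so ``Gronwall gives a uniform bound on $[0,T]$'' does not follow --- you only get local existence. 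The two-dimensional mechanism that makes the bound global is precisely the $L^\infty$ vorticity conservation you derived but then never used: one needs the logarithmic (Beale--Kato--Majda type) inequality $\lVert\nabla u\rVert_{L^\infty}\lesssim \lVert\omega\rVert_{L^\infty}\left(1+\log^+\lVert u\rVert_{H^s}\right)+\lVert\omega\rVert_{L^2}$, which combined with Gronwall gives a double-exponential but finite bound on every $[0,T]$. Without feeding the vorticity bound back into the $H^s$ estimate in this way, the argument does not prove the stated theorem. A secondary caveat: the Kato--Ponce commutator estimates you invoke are formulated on $\mathbb{R}^n$ or the torus; in a bounded domain, differentiating the equation $s$ times generates boundary terms that cannot simply be integrated away, which is why the cited proofs (Kato--Lai's abstract quasi-linear framework, Temam's Galerkin/elliptic-regularity scheme) take a structurally different route. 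You flag this as a ``technical obstacle'' but it is in fact the part of the proof where the bounded-domain case genuinely diverges from the whole-space one.
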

Now we can state our main Theorem. According to the analysis started in \cite{galeati2020convergence} and continued, recently, in \cite{flandoli2021quantitative}, \cite{flandoli2021scaling} the influence of the transport noise on the averaged solution is related to the $\ell^2$ norm of its coefficients, therefore we expect that the solution of equation (\ref{Ito Scaled}) converges to the solution of the Euler equations with null forcing term. 
\begin{theorem}\label{Main thm}
Under Hypotheses \ref{General Hypothesis}-\ref{hypothesis well-posedness}-\ref{Hypothesis inviscid limit}, calling $u^{\alpha}$ the solution of (\ref{Ito Scaled}) and $\bar{u}$ the solution of (\ref{Euler equations}), then \begin{align*}
    \lim_{\alpha \rightarrow 0}\mathbb{E}\left[\sup_{t\in [0,T]}\lVert u^{\alpha}(t)-\bar{u}(t)\rVert^2\right]= 0.
\end{align*}
\end{theorem}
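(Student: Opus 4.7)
The strategy is to adapt the deterministic energy method of \cite{lopes2015approximation, lopes2015convergence} to the stochastic framework. The transport noise in \eqref{Ito Scaled} carries the prefactor $\sqrt{\tilde\nu}=O(\alpha)$, so one should expect the stochastic corrections to be absorbable into the error terms that appear in the viscous limit. Since the evolution is written for $v^\alpha=u^\alpha-\alpha^2 Au^\alpha$ rather than for $u^\alpha$ directly, It\^o's formula cannot be applied to $\|u^\alpha-\bar u\|^2$ in one stroke. Instead I would split
\[
\|u^\alpha-\bar u\|^2=\|u^\alpha\|_V^2-\alpha^2\|\nabla u^\alpha\|_{L^2}^2-2\langle u^\alpha,\bar u\rangle+\|\bar u\|^2
\]
and derive an It\^o-type identity for each time-dependent piece separately.

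For $\|u^\alpha\|_V^2=\langle v^\alpha,u^\alpha\rangle$ I would apply It\^o to the pairing and test the equation against $u^\alpha$; the nonlinear term drops by the orthogonality $\langle P(\operatorname{curl}v^\alpha\times u^\alpha),u^\alpha\rangle=0$, and the resulting balance is precisely the one encoded in Lemma \ref{thm energy inequality}. For $\|\bar u\|^2$ I would use the conservation in Theorem \ref{Kato classiche}. The key quantity is the cross term: since $\bar u\notin V$, an integration by parts gives
\[
\langle u^\alpha,\bar u\rangle=\langle v^\alpha,\bar u\rangle-\alpha^2\langle \nabla u^\alpha,\nabla \bar u\rangle+\alpha^2\int_{\partial D}(\partial_n u^\alpha)\cdot\bar u\,d\sigma,
\]
and on the first summand I would apply It\^o's formula combined with the $v^\alpha$ equation and with $\partial_t\bar u=-P((\bar u\cdot\nabla)\bar u)$. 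The resulting nonlinear contribution $\langle P(\operatorname{curl}v^\alpha\times u^\alpha),\bar u\rangle-\langle v^\alpha,(\bar u\cdot\nabla)\bar u\rangle$ is then reorganized via the vector identity relating $\operatorname{curl}v\times u$ to the convective derivative and via the trilinear cancellation of $b$, so as to produce a term quadratic in $u^\alpha-\bar u$ plus remainders controlled by $\alpha^2\|u^\alpha\|_{H^3}\|\bar u\|_{H^s}$.

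Combining the three identities, localizing by a stopping time, taking supremum and expectation, and applying Burkholder-Davis-Gundy to the martingale part (which carries the factor $\sqrt{\tilde\nu}=O(\alpha)$ and is thus negligible), I expect to arrive at an inequality of the form
\[
\mathbb{E}\sup_{s\le t}\|u^\alpha(s)-\bar u(s)\|^2\le \mathbb{E}\|u_0^\alpha-\bar u_0\|^2+R_\alpha(T)+C\int_0^t\mathbb{E}\sup_{r\le s}\|u^\alpha(r)-\bar u(r)\|^2\,ds,
\]
with $R_\alpha(T)\to 0$, and conclude by Gronwall's lemma. Hypotheses \eqref{L^2 conv initial}-\eqref{H^3 behav initial} guarantee smallness of the initial error and of the $\alpha^2\|\nabla u^\alpha\|^2$ contribution, while $\nu,\tilde\nu=O(\alpha^2)$ kills the viscous dissipation, the It\^o correction coming from $F$, and the martingale. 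The main obstacle I anticipate is the boundary term $\alpha^2\int_{\partial D}(\partial_n u^\alpha)\cdot\bar u\,d\sigma$: because $\bar u$ does not satisfy the no-slip condition, this integral survives and must be shown to be $o(1)$ in expectation. I would exploit the tangentiality of $\partial_n u^\alpha$ on $\partial D$ (which follows from $\operatorname{div}u^\alpha=0$ together with $u^\alpha|_{\partial D}=0$), a trace inequality, and interpolation between the $V$-bound and the $O(\alpha^{-3})$ bound on $\|u^\alpha\|_{H^3}$ provided by Lemma \ref{thm energy inequality}, so as to obtain exactly the decay rate needed to close the Gronwall scheme.
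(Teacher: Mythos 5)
Your overall architecture coincides with the paper's: an energy identity for the $L^2$ error obtained through finite-dimensional Stokes projections, the a priori bounds of Lemma \ref{thm energy inequality}, absorption of the transport-noise corrections thanks to $\Tilde{\nu}=O(\alpha^2)$, Burkholder--Davis--Gundy for the martingale part, and a Gr\"onwall closure. The one genuine divergence is the treatment of the boundary mismatch. The paper follows Kato: it introduces a divergence-free corrector $v$ supported in a strip of width $\delta(\alpha)$ with $\bar u-v\in V$, $\lVert v\rVert\lesssim\delta^{1/2}$, $\lVert\nabla v\rVert\lesssim\delta^{-1/2}$, and chooses $\delta\to 0$, $\alpha^2/\delta\to 0$; every pairing of $\Delta u^{\alpha}$ or $Au^{\alpha}$ against $\bar u$ is split through $\bar u=(\bar u-v)+v$, so no boundary integral ever appears. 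You instead keep $\alpha^2\int_{\partial D}(\partial_n u^{\alpha})\cdot\bar u\,d\sigma$ explicit and kill it by trace plus interpolation. The scaling does close: $\lVert\partial_n u^{\alpha}\rVert_{L^2(\partial D)}^2\lesssim\lVert\nabla u^{\alpha}\rVert_{L^2}\lVert u^{\alpha}\rVert_{H^2}$, and with $\lVert\nabla u^{\alpha}\rVert_{L^2}=O(\alpha^{-1})$ and $\lVert u^{\alpha}\rVert_{H^2}\lesssim\lVert u^{\alpha}\rVert_{H^1}^{1/2}\lVert u^{\alpha}\rVert_{H^3}^{1/2}=O(\alpha^{-2})$ from Lemma \ref{thm energy inequality}, the boundary term is $O(\alpha^{1/2})$ in expectation, versus the paper's $O(\delta^{1/2})$ corrector remainder. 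So your route dispenses with the corrector, at the price of also having to treat the analogous boundary term produced by $\nu\langle\bar u,\Delta u^{\alpha}\rangle$ (coming from $2\nu\langle W^{\alpha},Au^{\alpha}\rangle$), which you do not mention but which obeys the same estimate since $\nu=O(\alpha^2)$.

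Two points in your sketch must be repaired before Gr\"onwall closes. First, the Gr\"onwall quantity has to be $\lVert W^{\alpha}(t)\rVert^2+\alpha^2\lVert\nabla u^{\alpha}(t)\rVert_{L^2}^2$, not $\lVert W^{\alpha}(t)\rVert^2$ alone as in your displayed inequality: the It\^o corrections and the absorbed BDG contribution are of size $\Tilde{\nu}\lVert\nabla u^{\alpha}\rVert_{L^2}^2\sim\alpha^2\lVert\nabla u^{\alpha}\rVert_{L^2}^2$, which is only $O(1)$ a priori and therefore cannot be put into $R_\alpha(T)$; its smallness must be propagated from \eqref{H^1 behav initial} through the Gr\"onwall argument itself (this is exactly what the paper does, and why $\alpha^2\operatorname{sup}_t\mathbb{E}[\lVert\nabla u^{\alpha}(t)\rVert_{L^2}^2]\to 0$ appears as a byproduct). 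Second, a remainder ``controlled by $\alpha^2\lVert u^{\alpha}\rVert_{H^3}\lVert\bar u\rVert_{H^s}$'' is useless as stated, since $\alpha^2\lVert u^{\alpha}\rVert_{H^3}=O(\alpha^{-1})$ diverges; the terms $\alpha^2 b(\cdot,\Delta u^{\alpha},\cdot)$ must instead be reorganized as in \cite{lopes2015convergence} into quantities of the type $\alpha^2\lVert\bar u\rVert_{H^3}\lVert\nabla u^{\alpha}\rVert_{L^2}^2$ and $\alpha^2\lVert\bar u\rVert_{H^3}^4\lVert u^{\alpha}\rVert^2$, which are again Gr\"onwall-able against the enlarged quantity above.
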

\begin{remark}
If $\overline{u}_0\in H\cap H^1(D;\mathbb{R}^2)$, the existence of a family $u_0^{\alpha}$ satisfying equations \eqref{L^2 conv initial},\eqref{H^1 behav initial},\eqref{H^3 behav initial} is guaranteed by Proposition 1 of \cite{lopes2015convergence}.
\end{remark}
\begin{remark}
Due to the poor regularity of the coefficients $F$ and $G^k$, equations (\ref{Ito}) are not guaranteed to be well-posed for grant from the results of \cite{razafimandimby2012strong}. Indeed, neither $F$ nor $G^k$ satisfy the assumptions of \cite{razafimandimby2010weak} or \cite{razafimandimby2012strong}. However, due to relation \eqref{trilinear G^{k,N}} and the good estimates of Corollary \ref{regularity of G^k, F}, we will be able to prove in Lemma \ref{energy in V} and Lemma \ref{energy in W} the same, actually stronger, energy estimates that are available in \cite{razafimandimby2012strong}. These and Lemma \ref{crucial lemma} are the main ingredients in order to prove the well-posedness of system \eqref{Ito}. On the contrary the well-posedness in the case of additive noise is completely solved by the results of \cite{razafimandimby2012strong}, thus in Section \ref{additive noise} we will only explain some remarks about the inviscid limit and the well-posedness in the additive noise framework.
\end{remark}
\begin{remark}
Either Theorem \ref{Thm well posed} and Theorem \ref{Main thm} continue to hold considering $\nu=0$. We will give the proof of all the statements below in full details considering the case $\nu>0$. However if something in the proof changes considering $\nu=0$ we will explain in a remark at the end of each proof what we need to change in order to deal to the other case. 
\end{remark}
\begin{remark}
The arbitrariness in the choice of the parameters $\nu$ and $\Tilde{\nu}$ allows us to generalize to this stochastic framework, via  Theorem \ref{Main thm}, some results of \cite{lopes2015convergence} and \cite{lopes2015approximation}. As a byproduct of its proof we obtain that under Hypotheses \ref{General Hypothesis}-\ref{hypothesis well-posedness}-\ref{Hypothesis inviscid limit} \begin{align*}
    \alpha^2\operatorname{sup}_{t\in [0,T]}\mathbb{E}\left[\lVert \nabla u^{\alpha}(t)\rVert_{L^2(D;\mathbb{R}^2)}^2\right]\rightarrow 0.
\end{align*} Moreover, considering $\Tilde{\nu}=\nu>0$ we recover the scaling introduced by Kuksin in \cite{kuksin2004eulerian} which is relevant for the inviscid limit at the level of invariant measures.
\end{remark}
\begin{remark}
The results of these notes are in a certain sense complementary to what we obtained in \cite{luongo2021inviscid}. In \cite{luongo2021inviscid} we
required poor regularity on the initial conditions of the Euler equations and the Navier-Stokes equations but we
got a conditioned result. On the contrary, in these notes we require strong regularity on the initial conditions of
the two problems and a special type of convergence of the initial conditions but we arrive at a not conditioned
result.
\end{remark}
\begin{remark}
The assumption on $\nu=O(\alpha^2)$ is hidden in equation (\ref{Ito Scaled}). For high frequencies $\Delta u$ is a damping term in equation (\ref{Ito Scaled}). In fact, for high frequencies $v\approx -\alpha^2 \Delta u$, thus the equation becomes, formally, \begin{align*}
    -\alpha^2 \partial_t \Delta u-\nu \Delta u+\dots=0.
\end{align*}
Asking $\nu=O(\alpha^2)$, means requiring that the damping coefficient does not blow-up.
\end{remark}
We conclude this section with few notations that will be adopted:
by $C$ we will denote several constant independent from $\nu$, $\alpha^2$ and $\sigma_k$, perhaps changing value line by line. In the case $C$ will depends by $\nu,\alpha$ or $\sigma_k$ we will add the dependence as a subscript. Sometimes we will use the notation $a \lesssim b$, if it exists a constant independent from $\nu$ and $\alpha^2$ such that $\alpha \leq C b$. In order to simplify the notation we will denote Sobolev spaces by $H^s$, forgetting domain and range.

\section{Well-Posedness}\label{well posed}
\subsection{Preliminaries}\label{preliminaries}
Before starting with the analysis of equation (\ref{Ito}), we need to recall some preliminaries results on the nonlinear term in the second-grade fluid equations, the Stokes operator $A$ and the embedding between $W$ and $V$. We will consider the Hilbert triple $$ W\hookrightarrow V\hookrightarrow W^*$$\\
We start recalling in a single lemma some classical facts on the nonlinear part of equation \eqref{second-grade system}. We refer to \cite{cioranescu1984existence},\cite{razafimandimby2010weak}, \cite{razafimandimby2012strong} for the proof of the various statements.
\begin{Lemma}\label{nonlinearity}
For any smooth, divergence free $\phi,\ v,\ w$ the following relation holds \begin{align}\label{equvalence hatB and b}
    \langle \operatorname{curl}\phi \times v, w\rangle_{L^2}=b(v,\phi,w)-b(w,\phi,v).
\end{align}
Moreover for $u,\ v,\ w$ the following inequalities hold \begin{align}
    \lvert \langle \operatorname{curl}(u-\alpha^2\Delta u) \times v, w\rangle_{L^2}\rvert\leq C \lVert u\rVert_{H^3}\lVert v\rVert_V \lVert w\rVert_W \label{inequality trilinear 1}\\
    \lvert \langle \operatorname{curl}(u-\alpha^2\Delta u) \times u, w\rangle_{L^2}\rvert\leq C \lVert u\rVert^2_V \lVert w\rVert_W\label{inequality trilinear 2}
\end{align}
Therefore there exists a bilinear operator $\hat{B}:W\times V\rightarrow W^*$ such that \begin{align}\label{definition hatB}
    \langle \hat{B}(u,v),w\rangle_{W^*,W}=\langle P(\operatorname{curl}(u-\alpha^2\Delta u)\times v),w \rangle
\end{align}
which satisfies for $u\in V,\ v\in W$
\begin{align}
    \lVert \hat{B}(v,u)\rVert_{W^*}\leq C \lVert u\rVert_V\lVert v\rVert_W \label{inequality bilinear 1}\\ 
    \lVert \hat{B}(u,u)\rVert_{W^*}\leq C \lVert u\rVert_V^2\label{inequality bilinear 2}.
\end{align}
Lastly, for $u\in W,\ v\in V, \ w \in W$
\begin{align}\label{antisimmetry hatB}
    \langle \hat{B}(u,v),w\rangle_{W^*,W}=-\langle \hat{B}(u,w),v\rangle_{W^*,W}.
\end{align}
\end{Lemma}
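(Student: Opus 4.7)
The plan is to prove the four parts of the lemma in the order stated, since each builds on the previous. For the algebraic identity \eqref{equvalence hatB and b}, I would proceed by a direct two-dimensional computation. Identifying the scalar $\omega = \operatorname{curl}\phi = \partial_1\phi_2 - \partial_2\phi_1$ with the vector $(0,0,\omega)$ in $\mathbb{R}^3$, one has $(\operatorname{curl}\phi)\times v = (-\omega v_2, \omega v_1)$, so the left-hand side of \eqref{equvalence hatB and b} equals $\int_D \omega(v_1 w_2 - v_2 w_1)\,dx$. Expanding the right-hand side through the definition \eqref{definition of b} of $b$ and grouping the index contributions, the parts symmetric in the pair $(v,w)$ cancel and the surviving antisymmetric combination factorises as $\int_D (v_1 w_2 - v_2 w_1)(\partial_1\phi_2 - \partial_2\phi_1)\,dx$, which matches the previous display. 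The identity then extends from smooth divergence-free functions to the relevant Sobolev spaces by continuity and density.

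For the trilinear bounds \eqref{inequality trilinear 1}--\eqref{inequality trilinear 2}, I would first apply \eqref{equvalence hatB and b} with $\phi = u - \alpha^2\Delta u$ to rewrite the left-hand sides as $b(v, u - \alpha^2\Delta u, w) - b(w, u - \alpha^2\Delta u, v)$. Estimate \eqref{inequality trilinear 1} then follows from H\"older's inequality combined with the two-dimensional Sobolev embedding $V, W \hookrightarrow L^4$: each $b$-term is bounded by $\|\nabla(u-\alpha^2\Delta u)\|_{L^2}\|v\|_{L^4}\|w\|_{L^4}$, and the first factor is controlled by $C\|u\|_{H^3}$ through \eqref{equivalence H3-W}. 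Estimate \eqref{inequality trilinear 2} is the delicate one, since the right-hand side cannot afford an $H^3$ norm of $u$; cancellation is required. The pieces $b(u,u,w)$ and $b(w,u,u)$ are bounded directly by $\|u\|_{L^4}^2\|\nabla w\|_{L^2} \lesssim \|u\|_V^2 \|w\|_W$; the $\alpha^2$-pieces $b(u,\Delta u, w)$ and $b(w,\Delta u, u)$ require a careful integration by parts which shifts one derivative from $u$ onto $w\in W$, where the extra regularity is absorbed by $\|w\|_W$. The incompressibility of $u,v,w$ and the no-slip boundary condition are used to discard the boundary terms that arise.

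Finally, with \eqref{inequality trilinear 1}--\eqref{inequality trilinear 2} in hand, the operator $\hat B\colon W\times V\to W^*$ defined by \eqref{definition hatB} is well-posed through the Riesz representation applied to the continuous linear functional $w\mapsto \langle P(\operatorname{curl}(u-\alpha^2\Delta u)\times v), w\rangle$, and the estimates \eqref{inequality bilinear 1}--\eqref{inequality bilinear 2} follow by taking the supremum over unit vectors in $W$ together with \eqref{equivalence H3-W}. The antisymmetry \eqref{antisimmetry hatB} is an immediate rereading of \eqref{equvalence hatB and b}: swapping the roles of $v$ and $w$ in that identity gives $\langle \operatorname{curl}\phi\times w, v\rangle_{L^2} = -\langle \operatorname{curl}\phi\times v, w\rangle_{L^2}$, which translates directly into the claim after specialising $\phi = u - \alpha^2 \Delta u$. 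I expect the main technical obstacle to be the integration by parts required in \eqref{inequality trilinear 2}: one must verify that only $\|u\|_V$, rather than any higher Sobolev norm, ends up on the right-hand side and that every boundary contribution truly vanishes under the Dirichlet regularity of $u$, $v$, and $w$.
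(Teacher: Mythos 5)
The paper does not actually prove this lemma: it explicitly defers all statements to the references \cite{cioranescu1984existence}, \cite{razafimandimby2010weak}, \cite{razafimandimby2012strong}, so there is no in-paper argument to compare yours against. Judged on its own terms, your plan is correct for the identity \eqref{equvalence hatB and b} (the $2$D index computation is exactly right), for \eqref{inequality trilinear 1}, for the construction of $\hat{B}$ and the bounds \eqref{inequality bilinear 1}--\eqref{inequality bilinear 2}, and for the antisymmetry \eqref{antisimmetry hatB}, which indeed is just \eqref{equvalence hatB and b} read with $v$ and $w$ exchanged (or the identity $(a\times b)\cdot c=-(a\times c)\cdot b$).

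The genuine gap is in \eqref{inequality trilinear 2}, precisely at the point you flag as the "main technical obstacle": the mechanism you propose --- integrate by parts so as to shift one derivative from $u$ onto $w\in W$ --- does not close the estimate. Starting from $-\alpha^2 b(u,\Delta u,w)+\alpha^2 b(w,\Delta u,u)$, one integration by parts necessarily leaves terms carrying \emph{two} derivatives on one factor of $u$ and at most one on the other; schematically a term bounded only by $\alpha^2\lVert \nabla u\rVert_{L^2}\lVert u\rVert_{H^2}\lVert w\rVert_{L^\infty}$, and $\lVert u\rVert_{H^2}$ is not controlled by $\lVert u\rVert_V$. The estimate is saved not by dumping regularity onto $w$ but by an exact structural cancellation: writing $\omega=\operatorname{curl}u$ and using $\Delta u=\nabla^{\perp}\omega$ for divergence-free $u$, the dangerous contribution collapses to
\begin{equation*}
-\frac{\alpha^2}{2}\int_D w\cdot\nabla\bigl(\omega^2\bigr)\,dx
\qquad\text{(equivalently } \tfrac{\alpha^2}{2}\int_D u\cdot\nabla\bigl(\lvert\nabla u\rvert^2\bigr)dx \text{ in the other grouping)},
\end{equation*}
which vanishes because $\operatorname{div}w=0$ and $w$ vanishes on $\partial D$. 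The surviving terms then pair one derivative of $u$ with one derivative of $u$ against $\nabla w$ in $L^\infty$, or $u$ in $L^4$ against $D^2 w$ in $L^4$, and these are genuinely bounded by $\lVert u\rVert_V^2\lVert w\rVert_W$. Until you isolate and use this cancellation (and verify the two harmless integrations by parts of $\Delta$ across $u_1w_2-u_2w_1$, whose boundary terms vanish because that product and its full gradient are zero on $\partial D$), the proof of \eqref{inequality trilinear 2} --- and hence of \eqref{inequality bilinear 2} --- is incomplete. This is exactly the computation carried out in \cite{cioranescu1984existence} and \cite{razafimandimby2010weak}, to which the paper appeals.
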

We need a basis orthonormal either in $W$ and in $V$ in order to deal with the Galerkin approximation of equation \eqref{Ito system}. The existence of such basis is guaranteed by the lemma below. The first part is a consequence of spectral theorem for self-adjoint compact operators stated in \cite{reed2012methods}, we refer to \cite{cioranescu1997weak} for the proof of the second part.
\begin{Lemma}\label{lemma eigenfunctions}
The injection of $W$ into $V$ is compact. Let $I$ be the isomorphism of $W^*$ onto $W$, then the restriction of $I$ to $V$ is a continuous compact operator into itself. Thus, there exists a sequence $e_i$ of elements of $W$ which forms an orthonormal basis in $W$, and an orthogonal basis in $V$. This sequence verifies:

\begin{align}
    \textit{for any   } v\in W \ \ \langle v, e_i\rangle_W=\lambda_i \langle v,e_i\rangle_V
\end{align} where $\lambda_{i+1}> \lambda_i> 0,\  i= 1,2,\dots$. Thus $\sqrt{\lambda_i}e_i$ is an orthonormal basis of $V$.
Moreover $e_i$ belong to $H^4(D;\mathbb{R}^2)$. 
\end{Lemma}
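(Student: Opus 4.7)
The plan is to deduce everything from the spectral theorem for compact self-adjoint positive operators, after installing the compact embedding $W\hookrightarrow V$. The identification of $W$ with $\hat W = H^3(D;\mathbb{R}^2)\cap V$ with equivalent norms (inequality \eqref{equivalence H3-W}) reduces this compactness to Rellich--Kondrachov: on the smooth bounded domain $D$ the inclusion $H^3\hookrightarrow H^1$ is compact, and the $V$-norm is controlled by the $H^1$-norm via \eqref{equivalence H1-V}. This takes care of the first assertion.

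For the second, I would view $V$ as continuously embedded into $W^*$ through $v\mapsto \ell_v:=\langle v,\cdot\rangle_V$; the embedding is injective because $W$ contains $C_c^\infty(D;\mathbb{R}^2)\cap H$, which is dense in $V$. The Riesz isomorphism $I:W^*\to W$ then restricts to a bounded operator from $V$ into $W$, and composing with $W\hookrightarrow V$ shows that $I|_V:V\to V$ is compact. Self-adjointness on $V$ is a one-line consequence of the defining identity $\langle Iv,\phi\rangle_W=\langle v,\phi\rangle_V$ for all $\phi\in W$: taking $\phi=Iw$ gives $\langle Iv,Iw\rangle_W=\langle v,Iw\rangle_V$, and the symmetric choice yields $\langle Iw,Iv\rangle_W=\langle w,Iv\rangle_V$, so self-adjointness follows from the symmetry of $\langle\cdot,\cdot\rangle_W$. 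Positivity is obtained from the same identity with $\phi=Iv$: $\langle Iv,v\rangle_V=\lVert Iv\rVert_W^2\geq 0$, with equality only for $v=0$ by the injectivity noted above.

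The spectral theorem cited from \cite{reed2012methods} now furnishes an orthonormal basis $\{f_i\}$ of $V$ consisting of eigenfunctions of $I|_V$ with positive eigenvalues $\mu_i\to 0$, which can be ordered decreasingly (and, after grouping equal values, strictly). Using the defining identity with $w=f_j$ gives $\mu_i\langle f_i,f_j\rangle_W=\langle f_i,f_j\rangle_V=\delta_{ij}$, so the $f_i$ are orthogonal in $W$ with $\lVert f_i\rVert_W^2=1/\mu_i$. Setting $e_i:=\sqrt{\mu_i}\,f_i$ therefore produces a system orthonormal in $W$, orthogonal in $V$, and the eigenvalue relation rewrites as $\langle v,e_i\rangle_W=\lambda_i\langle v,e_i\rangle_V$ for all $v\in W$ with $\lambda_i:=1/\mu_i>0$ increasing to $+\infty$. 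A direct check then shows $\lVert\sqrt{\lambda_i}\,e_i\rVert_V=1$, yielding the $V$-orthonormality of $\{\sqrt{\lambda_i}\,e_i\}$.

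The remaining, and genuinely delicate, point is the $H^4$ regularity of each $e_i$. The eigenvalue equation $Ie_i=\mu_i e_i$ amounts to a variational identity whose Euler--Lagrange formulation is a fourth-order elliptic boundary value problem inherited from the $W$ inner product (which involves $\operatorname{curl}(u-\Delta u)$). Smoothness of $\partial D$ together with standard elliptic regularity would then bootstrap $e_i\in W\subset H^3$ up to $H^4$. I expect this last step to be the real obstacle: the operator associated with the $W$ inner product is not one of the textbook elliptic operators, so writing the eigenvalue problem in a form where classical Agmon--Douglis--Nirenberg-type estimates apply requires some work; this is precisely what is carried out in \cite{cioranescu1997weak}, which I would invoke as a black box for this part.
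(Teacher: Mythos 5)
Your proposal is correct and follows the same route as the paper, which in fact gives no proof at all for this recalled lemma: it simply attributes the spectral part to the compact self-adjoint spectral theorem of \cite{reed2012methods} and the $H^4$ regularity to \cite{cioranescu1997weak}. Your worked-out details (compactness of $W\hookrightarrow V$ via the identification with $H^3\cap V$, self-adjointness and positivity of $I|_V$, the rescaling $e_i=\sqrt{\mu_i}f_i$ and the identity $\langle v,e_i\rangle_W=\lambda_i\langle v,e_i\rangle_V$) are sound, and deferring the genuinely delicate $H^4$ regularity to \cite{cioranescu1997weak} is exactly what the paper does.
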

We will use also some properties of the projection operator $P$ and the solution map of the Stokes operator. We refer to \cite{temam2001navier} for the proof of the lemmas below.
\begin{Lemma}\label{lemma projection}
The restriction of the projection operator $P:L^2(D;\mathbb{R}^2)\rightarrow H$ to $H^r(D;\mathbb{R}^2)$ is a continuous and linear map between $H^r(D;\mathbb{R}^2)$ and itself.
\end{Lemma}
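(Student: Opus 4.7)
The plan is to base the argument on the orthogonal Helmholtz/Leray decomposition: any $u \in L^2(D;\mathbb{R}^2)$ admits a unique splitting $u = Pu + \nabla \phi$ with $Pu \in H$ and $\nabla \phi$ in the $L^2$-orthogonal complement of $H$. The strategy is to show that when $u$ lies in $H^r$, the scalar potential $\phi$ gains exactly one derivative, so that $\nabla \phi \in H^r$ with a controlled norm, whence $Pu = u - \nabla \phi \in H^r$ as well.

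First I would identify $\phi$ as the weak solution of a Neumann boundary value problem. Testing the decomposition against smooth vector fields and integrating by parts (using that the $L^2$-orthogonal complement of $H$ consists precisely of gradients of $H^1$ functions) leads to
\begin{equation*}
\Delta \phi = \operatorname{div} u \ \text{in } D, \qquad \partial_n \phi = u \cdot n \ \text{on } \partial D,
\end{equation*}
with $\phi$ determined up to an additive constant, which I would normalize by imposing zero mean. Note that the compatibility condition $\int_D \operatorname{div} u = \int_{\partial D} u\cdot n$ holds automatically by the divergence theorem, so the problem is solvable.

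Next I would invoke elliptic regularity for the Neumann Laplacian on the smooth bounded domain $D$ (Hypothesis \ref{General Hypothesis}). The data obey $\|\operatorname{div} u\|_{H^{r-1}(D)} \leq \|u\|_{H^r(D;\mathbb{R}^2)}$ trivially, while the normal-trace theorem gives $\|u\cdot n\|_{H^{r-1/2}(\partial D)} \leq C \|u\|_{H^r(D;\mathbb{R}^2)}$ (for small $r$ one uses duality/interpolation or the density of smooth fields). Standard Neumann regularity then produces $\phi \in H^{r+1}(D)$ with
\begin{equation*}
\|\phi\|_{H^{r+1}(D)} \leq C \|u\|_{H^r(D;\mathbb{R}^2)},
\end{equation*}
and consequently $\|\nabla \phi\|_{H^r(D;\mathbb{R}^2)} \leq C \|u\|_{H^r(D;\mathbb{R}^2)}$. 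The triangle inequality applied to $Pu = u - \nabla \phi$ yields the desired bound $\|Pu\|_{H^r} \leq (1+C) \|u\|_{H^r}$, and linearity is immediate from the uniqueness of the decomposition.

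The main technical hurdle is the elliptic regularity estimate for the Neumann Laplacian at arbitrary Sobolev exponent $r$: while this is classical (localization, flattening the boundary, and a priori estimates for the half-space problem), its rigorous deployment at non-integer $r$ demands either interpolation or Besov-space machinery, and one has to be slightly careful with the compatibility condition and the normalization of $\phi$. All remaining steps are routine bookkeeping with trace inequalities and the orthogonal decomposition.
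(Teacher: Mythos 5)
Your argument is correct, and it is the standard proof of this fact: the paper itself gives no proof but defers to Temam's book, where the continuity of the Leray projector on $H^r$ is obtained exactly as you do, by writing $Pu=u-\nabla\phi$ with $\phi$ solving the Neumann problem $\Delta\phi=\operatorname{div}u$, $\partial_n\phi=u\cdot n$, and invoking elliptic regularity. Your closing remark is the right one to handle non-integer $r$ cleanly — establish boundedness on $L^2$ and on integer-order $H^m$ and interpolate — and note that the paper only ever uses the lemma for integer $r$, so no further care is needed.
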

\begin{Lemma}\label{Stokes}
 Let $f\in H^m(D;\mathbb{R}^2)$. Then, there exists a unique couple $(u,p)$, with $p$ defined up to an additive constant, solution of \begin{align*}
    u-\alpha^2\Delta u+\nabla p=f\\ 
    \operatorname{div}u=0\\
    u|_{\partial D}=0.
\end{align*}
Moreover $u=(I-\alpha^2A)^{-1}f\in H^{m+2}(D;\mathbb{R}^2),\ p\in H^{m+1}(D)$,\begin{align*}
\lVert u\rVert_{H^{m+2}}+\lVert p\rVert_{H^{m+1}}\leq C\lVert f\rVert_{H^m}.    
\end{align*}

\end{Lemma}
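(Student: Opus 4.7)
The plan is to split the argument into three stages: existence and uniqueness of a $V$-valued weak solution via Lax--Milgram, recovery of the pressure $p$ through a De Rham argument, and the upgrade to the claimed $H^{m+2}\times H^{m+1}$ regularity by a bootstrap based on classical Stokes theory.

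First I would introduce the bilinear form
\[ a(u,v):=\langle u,v\rangle+\alpha^{2}\langle\nabla u,\nabla v\rangle_{L^{2}} \]
on $V\times V$, which coincides with $\langle\cdot,\cdot\rangle_{V}$ and is therefore continuous and coercive with constant $1$. Since $f\in H^{m}\hookrightarrow L^{2}$, the functional $v\mapsto\langle f,v\rangle$ is continuous on $V$, so Lax--Milgram yields a unique $u\in V$ with $a(u,v)=\langle f,v\rangle$ for every $v\in V$. Testing against divergence-free $\phi\in C_{c}^{\infty}(D;\mathbb{R}^{2})$ gives $\langle u-\alpha^{2}\Delta u-f,\phi\rangle=0$, so by De Rham's theorem (equivalently Ne\v{c}as's lemma on the smooth bounded simply-connected domain $D$) there exists $p\in L^{2}(D)$, unique up to an additive constant, such that $u-\alpha^{2}\Delta u+\nabla p=f$ in $\mathcal{D}'(D)$. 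Projecting the equation by $P$ and using $P\nabla p=0$ identifies $u=(I-\alpha^{2}A)^{-1}f$.

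Next I would boost regularity by reading the identity as the inhomogeneous Stokes system $-\alpha^{2}\Delta u+\nabla p=f-u$, $\operatorname{div}u=0$, $u|_{\partial D}=0$. After absorbing $\alpha^{2}$ into the pressure this is the classical Stokes problem on a smooth bounded domain, and the Cattabriga / Agmon--Douglis--Nirenberg theorem gives
\[ \lVert w\rVert_{H^{k+2}}+\lVert q\rVert_{H^{k+1}}\le C\lVert g\rVert_{H^{k}} \]
for every solution of the corresponding Stokes problem with data $g\in H^{k}$. Starting from $u\in V\subset H^{1}$ one has $f-u\in H^{\min(1,m)}$, hence $u\in H^{\min(1,m)+2}$ and $p\in H^{\min(1,m)+1}$; iterating gains two derivatives on $u$ and one on $p$ at each step until $u\in H^{m+2}$ and $p\in H^{m+1}$. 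Summing the chain of estimates produces the announced bound $\lVert u\rVert_{H^{m+2}}+\lVert p\rVert_{H^{m+1}}\le C\lVert f\rVert_{H^{m}}$.

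The mildly delicate point is the pressure recovery: one needs Ne\v{c}as's inequality $\lVert q\rVert_{L^{2}/\mathbb{R}}\le C\lVert\nabla q\rVert_{H^{-1}(D)}$ on the smooth bounded domain $D$ to place $p$ in $L^{2}$ to begin with. Everything else is either a coercive variational argument or a routine elliptic bootstrap, and uniqueness is built into Lax--Milgram. Since the statement is quoted from Temam's monograph, in practice one simply cites it, but the above is the self-contained route.
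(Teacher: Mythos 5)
Your argument is correct, and it is precisely the standard proof (Lax--Milgram on $V$ with the bilinear form $\langle\cdot,\cdot\rangle_V$, De Rham/Ne\v{c}as recovery of the pressure, then a Cattabriga--Agmon--Douglis--Nirenberg bootstrap on $-\alpha^{2}\Delta u+\nabla p=f-u$) that the paper does not reproduce but simply delegates to the cited monograph of Temam. The only caveat worth recording is that dividing the Stokes system by $\alpha^{2}$ in the bootstrap makes the constant $C$ depend on $\alpha$, which is consistent with how the lemma is invoked later in the paper.
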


\begin{Lemma}\label{eigenfunction Stokes}
The injection of $V$ in $H$ is compact. Thus there exists a sequence $\Tilde{e}_i$ of elements of $H$ which forms an orthonormal basis in $H$ and an orthogonal basis in $V$. This sequence verifies \begin{align*}
    -A\Tilde{e}_i=\Tilde{\lambda}_i\Tilde{e}_i
\end{align*}
where $\Tilde{\lambda}_{i+1}>\Tilde{\lambda}_{i}>0,\ i=1,2,\dots$. Moreover $\Tilde{\lambda}_i\rightarrow +\infty$. Lastly $\Tilde{e}_i\in C^{\infty}(\overline{D};\mathbb{R}^2)$  under our assumptions on $D$ 
\end{Lemma}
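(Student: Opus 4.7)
The plan is to reduce the statement to the classical spectral theorem for compact self-adjoint operators on a Hilbert space (already cited in the paper via Reed--Simon), and then to extract the regularity of the eigenfunctions from elliptic regularity for the stationary Stokes problem, which is essentially the content of Lemma \ref{Stokes}.

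First I would establish the compactness of the embedding $V\hookrightarrow H$. Since $V=H^{1}_{0}(D;\mathbb{R}^2)\cap H$ is continuously included in $H^{1}(D;\mathbb{R}^2)$ and $D$ is bounded and smooth, the Rellich--Kondrachov theorem gives a compact inclusion $H^{1}(D;\mathbb{R}^2)\hookrightarrow L^{2}(D;\mathbb{R}^2)$; restricting to $V$ and noting that the $H$-norm coincides with the $L^{2}$-norm on divergence-free fields yields the claim. Next I would consider the inverse Stokes operator $S:=(-A)^{-1}:H\rightarrow H$. It is well defined and bounded because $-A$ is strictly positive on $H$ (Poincaré inequality on $V$), self-adjoint as the inverse of a self-adjoint operator, and maps $H$ into $D(A)\subset V$; composing with the compact inclusion $V\hookrightarrow H$ proven above shows that $S$ is compact on $H$. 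Applying the Hilbert--Schmidt spectral theorem to $S$ produces an $H$-orthonormal basis $\{\Tilde{e}_i\}_{i\in\mathbb{N}}$ of eigenfunctions with eigenvalues $\mu_i>0$ accumulating only at $0$. Setting $\Tilde{\lambda}_i:=1/\mu_i$ and reordering so that the displayed values are strictly increasing (grouping multiplicities, which are finite by compactness) gives the sequence of eigenvalues of $-A$ tending to $+\infty$.

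For the orthogonality in $V$, I would compute, for $i\neq j$,
\begin{align*}
\langle \Tilde{e}_i,\Tilde{e}_j\rangle_V = \langle \Tilde{e}_i,\Tilde{e}_j\rangle + \alpha^{2}\langle \nabla \Tilde{e}_i,\nabla \Tilde{e}_j\rangle_{L^2} = \bigl(1+\alpha^{2}\Tilde{\lambda}_j\bigr)\langle \Tilde{e}_i,\Tilde{e}_j\rangle = 0,
\end{align*}
after an integration by parts against $\Tilde{e}_i\in V$ using the Dirichlet boundary condition and the definition of $A$ recalled in \eqref{definition of A}. For the smoothness $\Tilde{e}_i\in C^\infty(\overline{D};\mathbb{R}^2)$, each eigenfunction solves the stationary Stokes problem
\begin{align*}
-\Delta \Tilde{e}_i+\nabla p_i=\Tilde{\lambda}_i\Tilde{e}_i,\qquad \operatorname{div}\Tilde{e}_i=0,\qquad \Tilde{e}_i|_{\partial D}=0,
\end{align*}
so a bootstrap based on Stokes elliptic regularity (the $\alpha=0$ analogue of Lemma \ref{Stokes}, available with the same constants under the smoothness of $\partial D$) raises the Sobolev index by two at each step: from $\Tilde{e}_i\in H^k$ one deduces $\Tilde{e}_i\in H^{k+2}$, and Sobolev embedding then gives $\Tilde{e}_i\in C^\infty(\overline{D};\mathbb{R}^2)$.

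The main potential obstacle is not conceptual but technical: one needs Stokes elliptic regularity up to the boundary in a sharp, iterable form in order to run the bootstrap indefinitely. This is precisely where the smoothness of $\partial D$ assumed in Hypothesis \ref{General Hypothesis} enters; granted this, every step of the argument is standard and this is why the paper is content to cite \cite{temam2001navier} for the proof.
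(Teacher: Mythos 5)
Your proposal is correct and is essentially the argument the paper defers to by citing \cite{temam2001navier}: compactness of $V\hookrightarrow H$ via Rellich--Kondrachov, the spectral theorem applied to the compact self-adjoint inverse $(-A)^{-1}$, $V$-orthogonality by the integration-by-parts identity $\langle \nabla \Tilde{e}_i,\nabla \Tilde{e}_j\rangle_{L^2}=\Tilde{\lambda}_j\langle \Tilde{e}_i,\Tilde{e}_j\rangle$, and smoothness by bootstrapping Stokes elliptic regularity on the eigenvalue problem. The only cosmetic point is that the strict inequality $\Tilde{\lambda}_{i+1}>\Tilde{\lambda}_i$ in the statement is incompatible with listing one eigenvalue per eigenfunction when multiplicities occur, which you handle as well as the statement permits by grouping multiplicities.
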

Combining Lemma \ref{lemma eigenfunctions} and Lemma \ref{Stokes} above, it follows that for each $f\in H^1$, $i \in\mathbb{N}$
\begin{align}
\langle (I-\alpha^2A)^{-1}f, e_i\rangle_W=\lambda_i\langle (I-\alpha^2A)^{-1}f, e_i\rangle_V=\lambda_i \langle f, e_i\rangle \label{result inner product eigenfucntions}.
\end{align}
Moreover, Lemmas \ref{lemma projection}, \ref{Stokes}, \ref{eigenfunction Stokes} above allow us to prove some useful estimates that will be exploited along the paper. We will need Corollary \ref{regularity of G^k, F} in order to evaluate the regularity of the linear operators appearing in equation \eqref{Ito system}. Instead we will need Lemma \ref{behavior G^k, F} in order to quantify explicitly the dependence from $\alpha$ in several embeddings and operators. This will be crucial in Section \ref{energy estimates} and Section \ref{proof of main thm}.\\
We recall first that by Poincaré inequality, equation \eqref{equivalence H3-W}, triangle inequality and equation \eqref{equivalence H1-V} the following relations hold:
\begin{align}
   & \lVert u \rVert_{H^3}^2 \leq C( \lVert \nabla u\rVert_{L^2}^2+\lVert \operatorname{curl}(u-\Delta u)\rVert_{L^2}^2)\leq C\left(\frac{\alpha^2+1}{\alpha^2}\right)^2\lVert \nabla u\rVert_{L^2}^2+\frac{C}{\alpha^4}\lVert \operatorname{curl}( u-\alpha^2\Delta u)\rVert_{L^2}^2 \label{scaling H^3}\\ & \lVert \nabla u\rVert_{L^2}^2\leq \frac{\lVert u\rVert_V^2}{\alpha^2}\label{scaling H^1}.
\end{align}
\begin{Lemma}\label{behavior G^k, F}

Let  $h\in H,\ u\in V,\ w\in V\cap H^2$, then
\begin{align}
        \lVert G^k(u)\rVert &\leq \lVert \sigma_k\rVert_{L^{\infty}}\lVert \nabla u\rVert_{L^2} \leq \lVert \sigma \rVert_{L^{\infty}}\frac{\lVert u\rVert_V}{\alpha}, \label{behave G^k V H}\\ 
    \lVert \nabla G^k(w)\rVert_{L^2} &\leq C\lVert \sigma_k\rVert_{W^{1,\infty}}{\lVert w\rVert_{H^2}},\label{behave G^k H^2 H^1}\\
    \lVert (I-\alpha^2A)^{-1}h\rVert & \leq \lVert h\rVert \label{norm HD(A)},\\ \lVert (-A)^{1/2}(I-\alpha^2A)^{-1}h\rVert&\leq\frac{1}{2\alpha} \lVert h\rVert,\label{norm HV}\\ \lVert -A(I-\alpha^2A)^{-1}h\rVert &\leq\frac{1}{\alpha^2} \lVert h\rVert\label{norm HH},\\ 
    \lVert (I-\alpha^2A)^{-1}(P(\sigma_k\cdot\nabla w))\rVert_W &\leq C\lVert \sigma_k\rVert_{W^{1,\infty}}\lVert w\rVert_{H^2}  \label{behave tilde G^k}.
\end{align}
Therefore, if $u\in V$ the following inequalities hold true
\begin{align}
    \lVert (I-\alpha^2A)^{-1}P(\sigma_k\cdot \nabla u)\rVert &\leq \lVert \sigma_k\rVert_{L^{\infty}}\lVert \nabla u\rVert_{L^2}\leq  \lVert \sigma_k\rVert_{L^{\infty}}\frac{\lVert u\rVert_{V}}{\alpha}\label{G^k H H^2},\\ 
     \lVert \nabla(I-\alpha^2A)^{-1}P(\sigma_k\cdot \nabla u)\rVert_{L^2} & \leq \frac{\lVert \sigma_k\rVert_{L^{\infty}}}{2\alpha}\lVert \nabla u\rVert_{L^2} \label{G^k H H^1},\\
      \lVert P(\sigma_k\cdot\nabla((I-\alpha^2A)^{-1}P(\sigma_k\cdot\nabla u)))\rVert & \leq \frac{\lVert \sigma_k\rVert_{L^{\infty}}^2}{2\alpha}\lVert \nabla u\rVert_{L^2}\label{preliminary F1}.\\
        \lVert P(\sigma_k\cdot\nabla((I-\alpha^2A)^{-1}P(\sigma_k\cdot\nabla u)))\rVert_{H^1} & \leq \frac{C\lVert \sigma_k\rVert_{L^{\infty}}\lVert \sigma_k\rVert_{W^{1,\infty}}\lVert \nabla u\rVert_{L^2} }{\alpha^2}\label{preliminary F2}.
\end{align}
\end{Lemma}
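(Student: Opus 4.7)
The statement collects a toolbox of mostly routine estimates whose common ingredients are: the continuity of the Leray projector on Sobolev scales (Lemma \ref{lemma projection}), H\"older and the Leibniz rule applied to $\sigma_k\cdot\nabla u$, the spectral decomposition of $-A$ furnished by Lemma \ref{eigenfunction Stokes}, and the elliptic regularity of the Stokes problem (Lemma \ref{Stokes}). The plan is to derive first the three scalar resolvent bounds \eqref{norm HD(A)}--\eqref{norm HH}, then the $G^k$-bounds \eqref{behave G^k V H}--\eqref{behave G^k H^2 H^1}, next the mixed estimate \eqref{behave tilde G^k}, and finally the consequences \eqref{G^k H H^2}--\eqref{preliminary F2} by direct composition.

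For the resolvent bounds I would use the eigenbasis $\{\tilde{e}_i\}$ of Lemma \ref{eigenfunction Stokes} with $-A\tilde{e}_i=\tilde{\lambda}_i\tilde{e}_i$, $\tilde{\lambda}_i>0$, and recall that $\|(-A)^{1/2}u\|^2=\|\nabla u\|_{L^2}^2$ for $u\in V$. Expanding $h=\sum_i h_i\tilde{e}_i$ reduces \eqref{norm HD(A)}, \eqref{norm HV}, \eqref{norm HH} to estimating
\[
\sup_{\lambda\geq 0}\frac{\lambda^{s}}{1+\alpha^{2}\lambda}\qquad (s=0,\tfrac{1}{2},1),
\]
which equal $1$, $\tfrac{1}{2\alpha}$ (the middle value being attained at $\lambda=\alpha^{-2}$) and $\alpha^{-2}$ respectively. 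The $G^k$ bounds are then immediate: \eqref{behave G^k V H} follows from the $L^2$-contractivity of $P$, H\"older's inequality, and \eqref{equivalence H1-V}; \eqref{behave G^k H^2 H^1} uses Lemma \ref{lemma projection} to move $\nabla$ across $P$ together with the product rule $\|\sigma_k\cdot\nabla w\|_{H^1}\leq C\|\sigma_k\|_{W^{1,\infty}}\|w\|_{H^2}$.

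For \eqref{behave tilde G^k} I would set $X:=(I-\alpha^{2}A)^{-1}P(\sigma_k\cdot\nabla w)$ and invoke Lemma \ref{Stokes} to recover the equation $X-\alpha^{2}\Delta X+\nabla p=\sigma_k\cdot\nabla w$ for a suitable pressure $p$; taking the curl kills the pressure gradient and yields
\[
\|\operatorname{curl}(X-\alpha^{2}\Delta X)\|_{L^{2}}=\|\operatorname{curl}(\sigma_k\cdot\nabla w)\|_{L^{2}}\leq C\|\sigma_k\|_{W^{1,\infty}}\|w\|_{H^{2}}.
\]
The remaining terms $\|X\|$ and $\alpha\|\nabla X\|_{L^{2}}$ in the $W$-norm are controlled by \eqref{norm HD(A)} and \eqref{norm HV} applied to the datum $\|P(\sigma_k\cdot\nabla w)\|\leq\|\sigma_k\|_{L^{\infty}}\|\nabla w\|_{L^{2}}$. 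Summing the three contributions gives the announced bound.

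The three consequences \eqref{G^k H H^2}, \eqref{G^k H H^1} and \eqref{preliminary F1} are obtained by directly concatenating \eqref{behave G^k V H} with \eqref{norm HD(A)}, \eqref{norm HV} (and applying one more H\"older bound for the outer $P(\sigma_k\cdot\nabla\,\cdot\,)$ in the third case). For \eqref{preliminary F2} one combines Lemma \ref{lemma projection} and Leibniz to obtain $\|P(\sigma_k\cdot\nabla X)\|_{H^{1}}\leq C\|\sigma_k\|_{W^{1,\infty}}\|X\|_{H^{2}}$, then applies elliptic regularity for the Stokes operator, $\|X\|_{H^{2}}\leq C\|AX\|$ with $C$ independent of $\alpha$, together with \eqref{norm HH} to get $\|X\|_{H^{2}}\leq\frac{C\|\sigma_k\|_{L^{\infty}}}{\alpha^{2}}\|\nabla u\|_{L^{2}}$. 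The delicate point throughout is keeping the $\alpha$-dependence sharp: the constant $\tfrac{1}{2\alpha}$ in \eqref{norm HV} forces the spectral optimization rather than a naive energy estimate (which would yield only $\tfrac{1}{\sqrt{2}\,\alpha}$), and for \eqref{preliminary F2} one must verify that the Stokes elliptic regularity constant does not secretly carry a negative power of $\alpha$.
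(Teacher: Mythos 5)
Your proposal is correct and follows essentially the same route as the paper: the spectral decomposition of $-A$ for the three resolvent bounds, H\"older/Leibniz together with the continuity of $P$ for the $G^k$ estimates, the curl identity from the Stokes problem of Lemma \ref{Stokes} for \eqref{behave tilde G^k}, and direct composition (with $\lVert X\rVert_{H^2}\leq C\lVert AX\rVert$ plus \eqref{norm HH}) for \eqref{G^k H H^2}--\eqref{preliminary F2}. The only cosmetic difference is that the paper bounds the $V$-part of \eqref{behave tilde G^k} via the variational identity $\lVert (I-\alpha^2A)^{-1}f\rVert_V^2=\langle f,(I-\alpha^2A)^{-1}f\rangle$ rather than by summing \eqref{norm HD(A)} and \eqref{norm HV}, which changes nothing.
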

\begin{proof}
Inequalities \eqref{behave G^k V H}, \eqref{behave G^k H^2 H^1} are trivial. Indeed, by Lemma \ref{lemma projection} it holds 
\begin{align*}
    \lVert G^k(u)\rVert=\lVert P(\sigma_k\cdot \nabla u)\rVert\leq \lVert \sigma_k\cdot\nabla u\rVert_{L^2}\leq \lVert \sigma_k\rVert_{L^{\infty}}{\lVert \nabla u\rVert_{L^2}}\leq \lVert \sigma_k\rVert_{L^{\infty}}\frac{\lVert u\rVert_V}{\alpha}.
    \end{align*}
\begin{align*}
    \lVert \nabla G^k(w)\rVert_{L^2}&=\lVert \nabla P(\sigma_k\cdot \nabla w)\rVert_{L^2}\\ &\leq C \lVert  \sigma_k\cdot \nabla w\rVert_{H^1}\\ & \leq C \lVert \sigma_k\rVert_{W^{1,\infty}}\lVert \nabla w\rVert_{H^1}\\ & \leq C \lVert \sigma_k\rVert_{W^{1,\infty}}\lVert  w\rVert_{H^2}.
\end{align*}
In order to prove inequalities \eqref{norm HD(A)}, \eqref{norm HV}, \eqref{norm HH} we exploit the Fourier decomposition $h=\sum_{i\in \mathbb{N}}\langle h,\Tilde{e}_i\rangle \Tilde{e}_i$. Therefore it holds
\begin{align*}
    \lVert (I-\alpha^2A)^{-1}h\rVert^2=\sum_{i\in\mathbb{N}}\frac{\langle h,\Tilde{e}_i\rangle^2}{(1+\alpha^2\Tilde{\lambda}_i)^2}\leq \lVert h\rVert^2,
\end{align*}
\begin{align*}
    \lVert (-A)^{1/2}(I-\alpha^2A)^{-1}h\rVert^2=\sum_{i\in\mathbb{N}}\frac{\Tilde{\lambda}_i}{(1+\alpha^2\Tilde{\lambda}_i)} \langle h,\Tilde{e}_i\rangle^2\leq\frac{1}{4\alpha^2} \lVert h\rVert^2,
\end{align*}
 \begin{align*}
     \lVert -A(I-\alpha^2A)^{-1}h\rVert=\sum_{i\in\mathbb{N}}\frac{\Tilde{\lambda}_i^2}{(1+\alpha^2\Tilde{\lambda}_i)} \langle h,\Tilde{e}_i\rangle^2\leq\frac{1}{\alpha^4} \lVert h\rVert^2.
\end{align*}  
For what concerns inequality \eqref{behave tilde G^k}, by definition of the norm in the space $W$ it holds \begin{align*}
    \lVert (I-\alpha^2A)^{-1}(P(\sigma_k\cdot\nabla w))\rVert_W^2=\lVert (I-\alpha^2A)^{-1}(P(\sigma_k\cdot\nabla w))\rVert_V^2+\lVert \operatorname{curl}((I-\alpha^2 \Delta)(I-\alpha^2A)^{-1}(P(\sigma_k\cdot\nabla w)))\rVert_{L^2}^2.
\end{align*}
 From Lemma \ref{Stokes}, we know that \begin{align}\label{preliminary G1}
     \lVert (I-\alpha^2A)^{-1}(P(\sigma_k\cdot\nabla w))\rVert_V^2&=\langle P(\sigma_k\cdot\nabla w),(I-\alpha^2A)^{-1}(P(\sigma_k\cdot\nabla w))\rangle\notag\\ & \leq \lVert \sigma_k\rVert_{L^{\infty}}\lVert \nabla w\rVert_{L^2}\lVert (I-\alpha^2A)^{-1}(P(\sigma_k\cdot\nabla w))\rVert_V\notag \\ & \leq \lVert \sigma_k\rVert_{L^{\infty}}^2\lVert \nabla w\rVert_{L^2}^2,\end{align}
     \begin{align}\label{preliminary G2}
    \lVert \operatorname{curl}\left((I-\alpha^2\Delta)(I-\alpha^2A)^{-1}(P(\sigma_k\cdot\nabla w))\right)\rVert_{L^2}=\lVert \operatorname{curl }(P(\sigma_k\cdot\nabla w))\rVert_{L^2}\leq C\lVert \sigma_k\rVert_{W^{1,\infty}}\lVert w\rVert_{H^2}
    \end{align}
Combining \eqref{preliminary G1} and \eqref{preliminary G2}, inequality \eqref{behave tilde G^k} follows.

Combining relation \eqref{behave G^k V H} with relations \eqref{norm HD(A)} and \eqref{norm HV}, inequalities \eqref{G^k H H^2} and \eqref{G^k H H^1} follow immediately. 
Let us now prove equation \eqref{preliminary F1}. By H\"older's inequality and relation \eqref{behave G^k H^2 H^1} we have
\begin{align*}
     \lVert P(\sigma_k\cdot\nabla((I-\alpha^2A)^{-1}P(\sigma_k\cdot\nabla u)))\rVert &\leq  \lVert\sigma_k\cdot\nabla((I-\alpha^2A)^{-1}P(\sigma_k\cdot\nabla u))\rVert_{L^2}\notag\\ & \leq \lVert \sigma_k\rVert_{L^{\infty}}\lVert  \nabla((I-\alpha^2A)^{-1}P(\sigma_k\cdot\nabla u))\rVert_{L^2}\notag\\ & \leq \frac{\lVert \sigma_k\rVert_{L^{\infty}}^2}{2\alpha}\lVert \nabla u\rVert_{L^2}.
\end{align*}

For what concerns the last one, by Lemma \ref{lemma projection}, \ref{Stokes} and relations \eqref{norm HH} it holds
    \begin{align*}
        \lVert P(\sigma_k\cdot\nabla((I-\alpha^2A)^{-1}P(\sigma_k\cdot\nabla u)))\rVert_{H^1} &\leq C  \lVert\sigma_k\cdot\nabla((I-\alpha^2A)^{-1}P(\sigma_k\cdot\nabla u))\rVert_{H^1}\notag\\ & \leq C\lVert \sigma_k\rVert_{W^{1,\infty}}\lVert \nabla((I-\alpha^2A)^{-1}P(\sigma_k\cdot\nabla u)) \rVert_{H^1}\notag\\ & \leq C\lVert \sigma_k\rVert_{W^{1,\infty}}\lVert (I-\alpha^2A)^{-1}P(\sigma_k\cdot\nabla u) \rVert_{H^2}\notag\\ & \leq C\lVert \sigma_k\rVert_{W^{1,\infty}}\lVert A(I-\alpha^2A)^{-1}P(\sigma_k\cdot\nabla u) \rVert\notag\\ & \leq \frac{C}{\alpha^2}\lVert \sigma_k\rVert_{W^{1,\infty}}\lVert P(\sigma_k\cdot\nabla u) \rVert\notag\\ & \leq \frac{C\lVert \sigma_k\rVert_{L^{\infty}}\lVert \sigma_k\rVert_{W^{1,\infty}}\lVert \nabla u\rVert_{L^2} }{\alpha^2}.
    \end{align*}

\end{proof}
\begin{corollary}\label{regularity of G^k, F}
\begin{align*}
    G^k\in \mathcal{L}(V;H),\quad F\in \mathcal{L}(V;H\cap H^1(D;\mathbb{R}^2)).
\end{align*}
In particular 
\begin{align}
    \lVert G^k(u)\rVert & \leq \lVert \sigma \rVert_{L^{\infty}}\frac{\lVert u\rVert_V}{\alpha}, \label{behave G^k V H bis}\\
      \lVert F(u)\rVert_{H}&\leq \frac{1}{2\alpha} \sum_{k\in K}\lVert \sigma_k\rVert_{L^{\infty}}^2\lVert \nabla u\rVert_{L^2} \label{behave F V H}\\
    \lVert F(u)\rVert_{H^1}&\leq \frac{C }{\alpha^2} \sum_{k\in K}\lVert \sigma_k\rVert_{L^{\infty}}\lVert \sigma_k\rVert_{W^{1,\infty}}\lVert \nabla u\rVert_{L^2} \label{behave F V H1}.
\end{align}
\end{corollary}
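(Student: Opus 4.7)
The statement is a direct corollary of Lemma \ref{behavior G^k, F} combined with the summability assumption $\sum_{k\in K}\lVert \sigma_k\rVert_{W^{1,\infty}}^2 < +\infty$ from Hypothesis \ref{hypothesis well-posedness}, so my plan is essentially to package the per-$k$ inequalities already proved in that lemma into the claimed continuous linear maps, adding only a triangle-inequality step on $k$.

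For $G^k$, linearity in $u$ is immediate from the formula $G^k(u)=P(\sigma_k\cdot\nabla u)$, and both the continuity from $V$ to $H$ and the quantitative bound \eqref{behave G^k V H bis} are exactly the content of \eqref{behave G^k V H}. No summation in $k$ is involved, so nothing further is needed here.

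For $F$, defined by $F(u)=\tfrac{1}{2}\sum_{k\in K}P(\sigma_k\cdot\nabla((I-\alpha^2A)^{-1}P(\sigma_k\cdot\nabla u)))$, linearity is again transparent. To obtain the $H$-bound \eqref{behave F V H} I would apply the triangle inequality termwise and invoke \eqref{preliminary F1}, producing
\begin{align*}
\lVert F(u)\rVert &\leq \frac{1}{2}\sum_{k\in K}\lVert P(\sigma_k\cdot\nabla((I-\alpha^2A)^{-1}P(\sigma_k\cdot\nabla u)))\rVert \\
&\leq \frac{1}{4\alpha}\sum_{k\in K}\lVert \sigma_k\rVert_{L^\infty}^2\lVert \nabla u\rVert_{L^2},
\end{align*}
which is even slightly stronger than the stated inequality. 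An entirely analogous computation, with \eqref{preliminary F2} in place of \eqref{preliminary F1}, yields \eqref{behave F V H1}. Convergence of the two series in $H$ and in $H^1$ and hence membership $F(u)\in H\cap H^1(D;\mathbb{R}^2)$ follow from $\lVert \sigma_k\rVert_{L^\infty}\leq \lVert \sigma_k\rVert_{W^{1,\infty}}$, Cauchy--Schwarz, and the $\ell^2$ summability in Hypothesis \ref{hypothesis well-posedness}. There is no genuine obstacle: the corollary is a bookkeeping step that records the bounds of Lemma \ref{behavior G^k, F} in the form needed for the well-posedness arguments of the next sections.
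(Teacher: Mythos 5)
Your proof is correct and follows exactly the route the paper intends: the corollary is stated without proof precisely because it is the termwise application of \eqref{behave G^k V H}, \eqref{preliminary F1} and \eqref{preliminary F2} followed by summation over $k$ using $\lVert\sigma_k\rVert_{L^\infty}\le\lVert\sigma_k\rVert_{W^{1,\infty}}$ and the summability in Hypothesis \ref{hypothesis well-posedness}. Your observation that the $H$-bound comes out with constant $\tfrac{1}{4\alpha}$ rather than $\tfrac{1}{2\alpha}$ is also accurate.
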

Lastly we recall two technical tools used in the proof of Theorem \ref{Main thm}. We refer to \cite{galdi2011introduction} for the proof of the interpolation inequality and to \cite{scheutzow2013stochastic} for the proof of the stochastic Gr\"onwall's Lemma.
\begin{theorem}Each function $f\in H^2$ satisfies the following inequality:
\begin{align}\label{interpolation estimate}
\lVert f\rVert_{H^1}\leq C   \lVert f\rVert_{L^2}^{1/2}\lVert f\rVert_{H^2}^{1/2}.
\end{align}
\end{theorem}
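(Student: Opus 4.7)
The plan is to reduce the interpolation estimate on the bounded smooth domain $D$ to the corresponding statement on $\mathbb{R}^2$ via a Sobolev extension, and then to prove the latter by elementary Fourier analysis.

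First I would invoke Stein's extension theorem: since $D$ is smooth and bounded, there exists a single bounded linear operator $E\colon L^2(D)\to L^2(\mathbb{R}^2)$ whose restriction to $H^s(D)$ maps continuously into $H^s(\mathbb{R}^2)$ for $s=0,1,2$ simultaneously, with $(Ef)|_D=f$. This gives $\lVert Ef\rVert_{H^s(\mathbb{R}^2)}\leq C_s\lVert f\rVert_{H^s(D)}$ uniformly in $f$. Writing $g=Ef$, I would then apply Plancherel together with Cauchy--Schwarz in Fourier variables:
\[
\lVert\nabla g\rVert_{L^2}^2=\int_{\mathbb{R}^2}|\xi|^2|\hat g(\xi)|^2\,d\xi\leq\Bigl(\int_{\mathbb{R}^2}|\hat g|^2\,d\xi\Bigr)^{1/2}\Bigl(\int_{\mathbb{R}^2}|\xi|^4|\hat g|^2\,d\xi\Bigr)^{1/2}=\lVert g\rVert_{L^2}\lVert\Delta g\rVert_{L^2}.
\]
Combining this with $\lVert g\rVert_{L^2}^2\leq\lVert g\rVert_{L^2}\lVert g\rVert_{H^2}$ yields $\lVert g\rVert_{H^1}^2\leq 2\lVert g\rVert_{L^2}\lVert g\rVert_{H^2}$.

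Restricting back to $D$ and inserting the extension bounds then gives $\lVert f\rVert_{H^1(D)}\leq\lVert Ef\rVert_{H^1(\mathbb{R}^2)}\leq\sqrt{2}\lVert Ef\rVert_{L^2(\mathbb{R}^2)}^{1/2}\lVert Ef\rVert_{H^2(\mathbb{R}^2)}^{1/2}\leq C\lVert f\rVert_{L^2(D)}^{1/2}\lVert f\rVert_{H^2(D)}^{1/2}$, which is exactly the claimed inequality.

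There is no serious obstacle: the whole-space inequality is a one-line application of Cauchy--Schwarz in Fourier space, and the only nontrivial ingredient is the existence of a common extension operator acting continuously on $L^2$, $H^1$ and $H^2$ at once, for which the smoothness of $\partial D$ (already assumed in Hypothesis \ref{General Hypothesis}) is precisely what is needed. An entirely equivalent alternative would be to cite real interpolation of the Sobolev scale: $[L^2(D),H^2(D)]_{1/2,2}=H^1(D)$ with equivalent norms on smooth bounded domains, from which the inequality follows at once with a constant depending only on $D$.
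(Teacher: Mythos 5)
Your proof is correct. Note first that the paper does not actually prove this statement: it simply cites Galdi's monograph for the interpolation inequality, so there is no in-text argument to compare against. Your extension-plus-Fourier route is a standard and complete way to supply the missing proof. The two points that genuinely need care are both handled: (i) you need a \emph{single} extension operator bounded simultaneously on $L^2(D)$, $H^1(D)$ and $H^2(D)$ (a total extension operator in the sense of Stein or Calder\'on), which is exactly where the smoothness of $\partial D$ from Hypothesis \ref{General Hypothesis} enters — the naive integration-by-parts proof of $\lVert\nabla f\rVert_{L^2}^2\leq\lVert f\rVert_{L^2}\lVert\Delta f\rVert_{L^2}$ fails on a bounded domain because of uncontrolled boundary terms, so the extension is not a cosmetic step; and (ii) on $\mathbb{R}^2$ the Cauchy--Schwarz estimate in Fourier variables uses $|\xi|^4\leq(1+|\xi|^2)^2$ to dominate $\lVert\Delta g\rVert_{L^2}$ by $\lVert g\rVert_{H^2}$, which is consistent with the standard Sobolev norm. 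Your closing remark that one could instead quote $(L^2(D),H^2(D))_{1/2,2}=H^1(D)$ is also valid and is essentially the abstract repackaging of the same extension argument; the Fourier computation has the advantage of being self-contained and of making the constant's dependence on $D$ (through the extension operator only) explicit.
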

\begin{theorem}\label{stochastic gronwall }
Let $Z(t)$ and $H(t)$ be continuous, nonnegative, adapted processes, $\psi(t) $ a nonnegative deterministic function and $M(t)$ a continuous local martingale such that \begin{align*}
    Z(t)\leq \int_0^t\psi(s)Z(s)ds +M(t)+H(t) \quad \forall t\in [0,T].
\end{align*}
Then $Z(t)$ satisfies the following inequality
\begin{align}
    \mathbb{E}[Z(t)]\leq exp\left(\int_0^t \psi(s) ds\right)\mathbb{E}\left[\operatorname{sup}_{r\in [0,s]}H(s)\right].
\end{align}
\end{theorem}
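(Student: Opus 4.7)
The plan is a standard localization argument reducing the statement to the classical deterministic Gronwall inequality. The main obstacle is that $M$ is only a continuous local martingale, so one cannot directly take expectations of the given inequality (both because $\mathbb{E}[M(t)]$ need not vanish and because $Z$ is not a priori integrable). To handle this, I would introduce the stopping times
\[
\sigma_n = \inf\{t\in[0,T]:\ |M(t)|\geq n\ \text{or}\ Z(t)\geq n\}\wedge T,
\]
with the convention $\inf\emptyset=T$. Since $M$ and $Z$ are continuous and adapted, $\sigma_n\uparrow T$ almost surely, and on $[0,\sigma_n]$ both $|M|$ and $Z$ are bounded by $n$. In particular, $M^{\sigma_n}$ is a bounded continuous martingale, so $\mathbb{E}[M(t\wedge\sigma_n)]=0$ for every $t\in[0,T]$.

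Next, I would evaluate the hypothesis at $t\wedge\sigma_n$ and use the nonnegativity of $\psi Z$ together with the bound
\[
\int_0^{t\wedge\sigma_n}\psi(s)Z(s)\,ds\ \leq\ \int_0^t \psi(s)\,Z(s\wedge\sigma_n)\,ds,
\]
then take expectations. By Fubini (the integrand is nonnegative and bounded by $n\,\psi(s)$) and the martingale property above, setting $f_n(t):=\mathbb{E}[Z(t\wedge\sigma_n)]$ one obtains
\[
f_n(t)\ \leq\ \int_0^t \psi(s)\,f_n(s)\,ds\ +\ \mathbb{E}\!\left[\sup_{r\in[0,t]} H(r)\right].
\]
Since $0\leq f_n(t)\leq n$ on $[0,T]$, the classical deterministic Gronwall inequality immediately yields
\[
f_n(t)\ \leq\ \exp\!\left(\int_0^t \psi(s)\,ds\right)\,\mathbb{E}\!\left[\sup_{r\in[0,t]} H(r)\right].
\]

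Finally, I would pass to the limit $n\to\infty$: because $\sigma_n\uparrow T$ almost surely and $Z$ is continuous, $Z(t\wedge\sigma_n)\to Z(t)$ a.s.\ for every $t\in[0,T]$, so Fatou's lemma gives
\[
\mathbb{E}[Z(t)]\ \leq\ \liminf_{n\to\infty} f_n(t)\ \leq\ \exp\!\left(\int_0^t \psi(s)\,ds\right)\,\mathbb{E}\!\left[\sup_{r\in[0,t]} H(r)\right],
\]
which is the claimed bound. The only genuinely delicate point is the \emph{joint} localization of $M$ and $Z$ by the same stopping times: killing $M$ before blow-up alone would not force $Z$ to be integrable, but stopping simultaneously at the first exit of either process from $[-n,n]$ makes $f_n$ bounded, after which deterministic Gronwall and Fatou close the argument.
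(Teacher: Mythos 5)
Your argument is correct: the joint localization of $M$ and $Z$ by the same stopping times, followed by the deterministic Gr\"onwall inequality for the bounded functions $f_n$ and Fatou's lemma, is exactly the standard route, and it is essentially the proof given in the reference \cite{scheutzow2013stochastic} that the paper cites in lieu of a proof (the paper itself does not prove this lemma). The only implicit point worth flagging is that you use $\mathbb{E}[M(t\wedge\sigma_n)]=0$, which requires $M(0)=0$; this is tacit in the statement (otherwise the claimed bound is false), so it is not a gap in your argument.
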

\subsection{Galerkin Approximation and Limit Equations}\label{Sec Galerkin approximation}
Let $W^N=\operatorname{span}\{e_1,\dots,\ e_N\}\subseteq W$ and $P^N:W\rightarrow W^N$ the orthogonal projector. We start looking for a finite dimension approximation of the solution of equation (\ref{Ito}). We define $$ u^N(t)=\sum_{i=1}^N c_{i,N}(t)e_i.$$ 
The $c_{i,N}$ have been chosen in order to satisfy $\forall e_i,\ \ 1\leq i\leq N$
\begin{align*}
    \langle u^N(t), e_i\rangle_V-  \langle u^N_0, e_i\rangle_V&=\nu \int_0^t \langle \nabla u^N(s), \nabla e_i\rangle_{L^2} ds- \int_0^t b(u^N(s), u^N(s)-\alpha^2 \Delta u^N(s),e_i)ds\\ &-\alpha^2 \int_0^t b(e_i, \Delta u^N(s),u^N(s))ds+\int_0^t \langle F^N(s),e_i \rangle ds \\ &+ \sum_{k\in K} \int_0^t \langle G^{k,N}(s),e_i \rangle  dW^k_s \ \ \mathbb{P}-a.s.
\end{align*}
where $u_0^N=\sum_{i=1}^N\langle u_0,e_i\rangle_W e_i$, $F^N(s)=F(u^N(s))$ and $G^{k,N}(s)=G^k(u^N(s))$. The local well-posedness of this equation follows from classical results about stochastic differential equations with locally Lipshitz coefficients, see for example \cite{karatzas2012brownian},\cite{skorokhod1982studies}. The global well-posedness follows from the a priori estimates in Lemma \ref{energy in V}, \ref{energy in W}.

\begin{Lemma}\label{energy in V}
Assuming Hypothesis \ref{hypothesis well-posedness}, the following relations hold:
\begin{itemize}
    \item The It\^o's formula
\begin{align}\label{ito galerkin}
    d \lVert u^N\rVert_V^2=-2\nu \lVert \nabla u^N\rVert_{L^2}^2 dt-\sum_{k\in K} b(\sigma_k, u^N,(I-P^N)(I-\alpha^2 A)^{-1}P(\sigma_k\cdot\nabla u^N))dt.
\end{align}

\item The inequality below holds uniformly in $N$
\begin{align}\label{norm V galerkin}
    \mathbb{E}\left[\operatorname{sup}_{t\in [0,T]}\lVert u^N(t)\rVert_V^p\right]\leq C_{p,\alpha,\{\sigma_k\}_{k\in K}},\ \ \forall p\geq 1.
\end{align}
\end{itemize}
\end{Lemma}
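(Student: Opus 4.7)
The idea is to apply the finite-dimensional It\^o formula to $\lVert u^N\rVert_V^2$ and then use the divergence-free structure of the $\sigma_k$ together with the antisymmetry of $b$ from Lemma \ref{nonlinearity} to kill every term that should not survive. Since by Lemma \ref{lemma eigenfunctions} $\{e_i\}$ is orthogonal in $V$ with $\lVert e_i\rVert_V^2 = 1/\lambda_i$, writing $a_i = \langle u^N, e_i\rangle_V$ gives $\lVert u^N\rVert_V^2 = \sum_{i=1}^N \lambda_i\, a_i^2$ and classical It\^o applies to this finite sum. A preliminary step is to identify the $W^N$-valued diffusion coefficient of $u^N$ as $P^N(I-\alpha^2 A)^{-1}G^{k,N}(u^N)$: indeed the scalar equation for $a_i$ pairs $du^N$ with $e_i$ in the $V$-inner product whereas $G^{k,N}$ is paired in $H$, and the two conventions are linked precisely by \eqref{result inner product eigenfucntions} together with Lemma \ref{Stokes}.

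Setting $\psi_k=(I-\alpha^2A)^{-1}G^{k,N}(u^N)$, It\^o's formula for $\lVert u^N\rVert_V^2$ produces four contributions. The viscous term gives $-2\nu\lVert \nabla u^N\rVert_{L^2}^2\,dt$. The nonlinear drift equals $-2b(u^N,v^N,u^N) - 2\alpha^2 b(u^N,\Delta u^N,u^N)$, and since $b(u^N,u^N,u^N)=0$ by antisymmetry one has $b(u^N,v^N,u^N) = -\alpha^2 b(u^N,\Delta u^N,u^N)$, so the two pieces cancel. The stochastic integral
\begin{equation*}
2\sum_k \langle u^N, P^N\psi_k\rangle_V\, dW^k_t = 2\sum_k \langle u^N, \psi_k\rangle_V\, dW^k_t = 2\sum_k \langle u^N, G^{k,N}\rangle\, dW^k_t = 2\sum_k b(\sigma_k,u^N,u^N)\, dW^k_t
\end{equation*}
also vanishes: $P^N$ is $V$-self-adjoint with $u^N\in W^N$, Lemma \ref{Stokes} gives $\langle u^N,\psi_k\rangle_V = \langle u^N, G^{k,N}\rangle$, and $\sigma_k\in V$ is divergence-free and no-slip. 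The remaining two drift pieces satisfy, by the same two ingredients,
\begin{equation*}
2\langle F^N, u^N\rangle = -\sum_k b(\sigma_k,u^N,\psi_k), \qquad \sum_k \lVert P^N\psi_k\rVert_V^2 = \sum_k b(\sigma_k,u^N,P^N\psi_k),
\end{equation*}
and their sum collapses to $-\sum_k b(\sigma_k, u^N, (I-P^N)\psi_k)$, giving \eqref{ito galerkin}.

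\textbf{Energy estimate.} Identity \eqref{ito galerkin} contains \emph{no} stochastic integral, hence it is a pathwise ODE-type bound. By H\"older and the fact that $P^N$ is $V$-orthogonal,
\begin{equation*}
|b(\sigma_k, u^N, (I-P^N)\psi_k)| \le \lVert\sigma_k\rVert_{L^\infty}\lVert\nabla u^N\rVert_{L^2}\lVert\psi_k\rVert_V,
\end{equation*}
and \eqref{G^k H H^2}-\eqref{G^k H H^1} give $\lVert\psi_k\rVert_V \le C\lVert\sigma_k\rVert_{L^\infty}\lVert\nabla u^N\rVert_{L^2}$. Summing in $k$ via Hypothesis \ref{hypothesis well-posedness} and using \eqref{equivalence H1-V} bounds the remaining term in \eqref{ito galerkin} by $C_{\alpha,\sigma}\lVert u^N\rVert_V^2$. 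Deterministic Gr\"onwall produces
\begin{equation*}
\sup_{t\in[0,T]} \lVert u^N(t)\rVert_V^2 \le \lVert u_0^N\rVert_V^2\, e^{C_{\alpha,\sigma}T}
\end{equation*}
pathwise; raising to the $p/2$-th power, taking expectation, and using $\lVert u_0^N\rVert_V \le \lVert u_0^N\rVert_W \le \lVert u_0\rVert_W$ ($P^N$ being $W$-orthogonal) together with the $W$-integrability of $u_0$ from Hypothesis \ref{hypothesis well-posedness}, yields \eqref{norm V galerkin}.

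The principal obstacle is the It\^o computation: one must carefully track the two inner products used in the scalar equation, identify the Galerkin diffusion as $P^N(I-\alpha^2 A)^{-1}G^{k,N}$, and observe that the It\^o-Stratonovich corrector $F^N$ combines with the quadratic variation of $G^{k,N}$, through the antisymmetry of $b$, into a single trilinear form carrying only the projection defect $I-P^N$. Once this algebraic cancellation is in hand, the stochastic integral has disappeared and the energy estimate reduces to a deterministic Gr\"onwall argument.
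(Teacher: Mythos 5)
Your proposal is correct and follows essentially the same route as the paper: It\^o's formula applied to $\sum_{i=1}^N\lambda_i\langle u^N,e_i\rangle_V^2$, identification of the diffusion coefficient as $P^N(I-\alpha^2A)^{-1}G^{k,N}$ via \eqref{result inner product eigenfucntions}, vanishing of the martingale part and of the nonlinear drift by the antisymmetry of $b$, the collapse of $2\langle F^N,u^N\rangle$ plus the quadratic variation into $-\sum_k b(\sigma_k,u^N,(I-P^N)\psi_k)$, and a pathwise Gr\"onwall argument. The only (harmless) deviation is that you close the estimate with $\lVert u_0^N\rVert_V\le\lVert u_0\rVert_W$ rather than the paper's $\lVert u_0^N\rVert_V\le\lVert u_0\rVert_V$, which is still covered by Hypothesis \ref{hypothesis well-posedness}.
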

\begin{proof}
If we apply the It\^o's formula to $\sum_{i=1}^N \lambda_i\langle u^N(t),e_i\rangle_V^2$, we get \begin{align}\label {Ito Norm V galerkin}
    \lVert u^N(t)\rVert_V^2+2\nu\int_0^t \lVert \nabla u^N(s)\rVert_{L^2}^2 ds& =\lVert u^N_0\rVert_V^2+2\int_0^t \langle F^N(s),u^N(s)\rangle ds+\sum_{i=1}^N\sum_{k\in K} \lambda_i \int_0^t \langle G^{k,N}(s),e_i\rangle^2 ds\notag\\ &+ 2\sum_{k=1}^N \int_0^t \langle G^{k,N}(s), u^N(s)\rangle dW^k_s.
\end{align}
In the last relation we exploited the fact that $b(u^N(s), u^N(s), u^N(s))=b(u^N(s),\Delta u^N(s), u^N(s))=0$.
Now we observe that for each $k$, $\langle G^{k,N}(s), u^N(s)\rangle=0$. In fact \begin{align}\label{trilinear G^{k,N}}
    \langle G^{k,N}(s), u^N(s)\rangle=b(\sigma_k, u^N(s),u^N(s))=0.
\end{align}
Moreover we have \begin{align}\label{corrector Galerkin}
    2\int_0^t \langle F^N(s),u^N(s)\rangle ds+\sum_{i=1}^N\sum_{k\in K} \lambda_i \int_0^t \langle G^{k,N}(s),e_i\rangle^2 ds=-\sum_{k\in K} b(\sigma_k, u^N(s),(I-P^N)(I-\alpha^2 A)^{-1}P(\sigma_k\cdot\nabla u^N(s))).
\end{align}  In fact, \begin{align*}
    \sum_{i=1}^N \lambda_i  \langle G^{k,N}(s),e_i\rangle^2 \ =b(\sigma_k,u^N(s),\sum_{i=1}^N \lambda_i e_i b(\sigma_k,u^N(s),e_i)).
\end{align*}
It remains to show that \begin{align*}
    &b(\sigma_k,u^N(s),\sum_{i=1}^N \lambda_i e_i b(\sigma_k,u^N(s),e_i))+b(\sigma_k,(I-\alpha^2A)^{-1}P(\sigma_k\cdot \nabla u^N(s))), u^N)=\\ &-b(\sigma_k, u^N(s),(I-P^N)(I-\alpha^2 A)^{-1}P(\sigma_k\cdot\nabla u^N(s))).
\end{align*} 
Thus it is enough to show that  $\langle\sum_{i=1}^N \lambda_i e_i b(\sigma_k,u^N(s),e_i),v\rangle_V=\langle (I-\alpha^2A)^{-1}P(\sigma_k\cdot \nabla u^N(s))),v\rangle_V$ for all $v\in V_N$, where $V_N=\operatorname{span}\{e_i\}_{i=1}^N$. The last claim is true, in fact \begin{align*}
    & \langle(I-\alpha^2A)^{-1}P(\sigma_k\cdot \nabla u^N(s))), v \rangle_V =b(\sigma_k, u^N(s),v)\\ & b(\sigma_k,  u^N(s),\sum_{i=1}^N \lambda_i e_i\langle e_i,v\rangle_v)=\langle\sum_{i=1}^N \lambda_i e_i b(\sigma_k,u^N(s),e_i),v\rangle_V. 
\end{align*}
Therefore, combining equation \eqref{Ito Norm V galerkin} and equations \eqref{trilinear G^{k,N}}, \eqref{corrector Galerkin} we obtain
\begin{align*}
    \lVert u^N(t)\rVert_V^2+2\nu\int_0^t \lVert \nabla u^N(s)\rVert_{L^2}^2 ds & =\lVert u^N_0\rVert_V^2-\sum_{k\in K} \int_0^t  b(\sigma_k,u^N(s),(I-P^N)(I-\alpha^2A)^{-1}P(\sigma_k\cdot \nabla u^N(s)))ds\\ & \leq \lVert  u_0\rVert_V^2+\sum_{k\in K}\lVert \sigma_k\rVert_{L^{\infty}}\int_0^t \lVert \nabla u^N(s)\rVert_{L^2} \lVert(I-P^N)(I-\alpha^2A)^{-1}P(\sigma_k\cdot \nabla u^N(s)) \rVert ds\\ & \leq \lVert  u_0\rVert_V^2+\sum_{k\in K}\lVert \sigma_k\rVert_{L^{\infty}}\int_0^t \lVert u^N(s)\rVert_V \lVert(I-\alpha^2A)^{-1}P(\sigma_k\cdot \nabla u^N(s)) \rVert_Vds\\ & \leq \lVert  u_0\rVert_V^2+\sum_{k\in K}\lVert \sigma_k\rVert_{L^{\infty}}\int_0^t \lVert u^N(s)\rVert_V \lVert P(\sigma_k\cdot \nabla u^N(s)) \rVert ds\\ & \leq \lVert  u_0\rVert_V^2+\frac{1}{\alpha}\sum_{k\in K}\lVert \sigma_k\rVert_{L^{\infty}}^2\int_0^t \lVert u^N(s)\rVert_V ^2ds.
\end{align*}
Thus, by Gr\"onwall,\begin{align}\label{grownall V 2 galerkin}
    \operatorname{sup}_{t\in [0,T]} \lVert u^N(t)\rVert_V^2\leq C_{\alpha,\{\sigma_k\}_{k\in K}} \lVert u_0\rVert_V^2.
\end{align} Taking the expected value of equation \eqref{grownall V 2 galerkin} we get the thesis for $p\leq 2$. If $p>2$, raising to the power $p/2$ both sides of equation \eqref{grownall V 2 galerkin} the thesis follows easily.
\end{proof}
\begin{Lemma}\label{energy in W}
Assuming Hypothesis \ref{hypothesis well-posedness}, the following relation holds:
\begin{align}\label{norm W Galerkin}
    \mathbb{E}\left[\operatorname{sup}_{t\in [0,T]}\lVert u^N(t)\rVert_W^p\right]\leq C_{p,\nu, \alpha,\{\sigma_k\}_{k\in K}},\ \ \forall p\geq 1
\end{align}
where $C_{p,\nu, \alpha,\{\sigma_k\}_{k\in K}}$ is a constant independent from $N$.
\end{Lemma}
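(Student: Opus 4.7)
The plan is to mimic the proof of Lemma \ref{energy in V} one step higher in regularity, applying It\^o's formula to $\lVert u^N(t)\rVert_W^2=\sum_{i=1}^N c_{i,N}^2(t)$, where $c_{i,N}=\langle u^N,e_i\rangle_W=\lambda_i\langle u^N,e_i\rangle_V$ by Lemma \ref{lemma eigenfunctions}. The SDE for $c_{i,N}$ is obtained by multiplying by $\lambda_i$ the Galerkin equation already written for $\langle u^N,e_i\rangle_V$, and applying It\^o to $\sum_i c_{i,N}^2$ produces an identity formally analogous to \eqref{Ito Norm V galerkin} but weighted by $\lambda_i^2$ instead of $\lambda_i$.

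Four contributions appear on the right-hand side and must be controlled. The viscous term, after integration by parts, either carries a good sign or can be absorbed into $C_{\nu,\alpha}\lVert u^N\rVert_W^2$. The bilinear drift is estimated via \eqref{inequality bilinear 2}, which gives $\lVert \hat{B}(u^N,u^N)\rVert_{W^*}\le C\lVert u^N\rVert_V^2$; a duality pairing then yields a contribution of order $\lVert u^N\rVert_V^2\lVert u^N\rVert_W$. The It\^o corrector drift from $F^N$ and the quadratic variation of the stochastic integral combine, following the manipulation leading to \eqref{corrector Galerkin}, into a single transport-type residual involving $(I-P^N)(I-\alpha^2A)^{-1}P(\sigma_k\cdot\nabla u^N)$. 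This residual is estimated by means of \eqref{behave tilde G^k} and \eqref{preliminary F2} of Lemma \ref{behavior G^k, F}, producing a bound of order $C_{\alpha,\{\sigma_k\}_{k\in K}}\lVert u^N\rVert_W^2$. Assembling these estimates I expect to arrive at
\begin{equation*}
\lVert u^N(t)\rVert_W^2\leq \lVert u_0^N\rVert_W^2 + \int_0^t C_{\nu,\alpha,\{\sigma_k\}_{k\in K}}\bigl(1+\lVert u^N(s)\rVert_V^2\bigr)\lVert u^N(s)\rVert_W^2\,ds + M_t,
\end{equation*}
with $M_t$ a continuous local martingale whose quadratic variation is controlled by the same inequalities.

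The conclusion will then follow from the stochastic Gr\"onwall inequality of Theorem \ref{stochastic gronwall } combined with BDG for $M_t$ and the uniform-in-$N$ bound \eqref{norm V galerkin} on the $V$-norm, first for $p=2$; the general case $p\geq 1$ is obtained by raising to the power $p/2$ and re-applying BDG, or equivalently by repeating the argument with It\^o's formula applied directly to $\lVert u^N\rVert_W^p$. The main obstacle is the noise--corrector combination in the second step: at the $V$-level the identity \eqref{corrector Galerkin} has weights $\lambda_i$ and produces a residual that pairs naturally against $u^N$ itself, whereas at the $W$-level the weights are $\lambda_i^2$, corresponding formally to an additional factor of $-A$. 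Consequently the residual has to be estimated using the stronger $H^1$-bound \eqref{preliminary F2} in place of \eqref{preliminary F1}, which is precisely where the $\alpha^{-2}$ dependence and the full $W^{1,\infty}$-regularity of the $\sigma_k$ enter the final constant $C_{p,\nu,\alpha,\{\sigma_k\}_{k\in K}}$.
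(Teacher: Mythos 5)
Your overall architecture (It\^o on $\sum_i\langle u^N,e_i\rangle_W^2$ with the $\lambda_i^2$-weighted Galerkin equations, then Gr\"onwall plus BDG, with the noise terms controlled through \eqref{behave tilde G^k}, \eqref{preliminary F2} and the $V$-bound \eqref{norm V galerkin}) is the same as the paper's. But there is a genuine gap in your treatment of the nonlinear term. You claim the bilinear drift can be handled by the $W^*$ bound \eqref{inequality bilinear 2} and "a duality pairing", yielding a contribution of order $\lVert u^N\rVert_V^2\lVert u^N\rVert_W$. At the $W$-level the drift is $-2\sum_{i\le N}\lambda_i^2\langle u^N,e_i\rangle_V\langle\hat B(u^N,u^N),e_i\rangle_{W^*,W}$, which telescopes (via \eqref{result inner product eigenfucntions}) not into $\langle\hat B(u^N,u^N),u^N\rangle_{W^*,W}$ but into $\langle\hat B(u^N,u^N),u^N\rangle_{W^*,W}+\langle \operatorname{curl}\bigl(\operatorname{curl}(v^N)\times u^N\bigr),\operatorname{curl}(u^N-\alpha^2\Delta u^N)\rangle_{L^2}$ with $v^N=u^N-\alpha^2\Delta u^N$. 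The first piece vanishes by \eqref{antisimmetry hatB}; the second involves $\nabla\operatorname{curl}(v^N)$, i.e.\ four derivatives of $u^N$, and no $W^*$ duality bound of the form $C\lVert u^N\rVert_V^2\lVert u^N\rVert_W$ is available for it (the element you would have to pair against has $W$-norm of order $\lambda_N\lVert u^N\rVert_W$, so the bound is not uniform in $N$). The term is controlled only through the exact two-dimensional vorticity-transport cancellation $\langle u^N\cdot\nabla q^N,q^N\rangle_{L^2}=0$ for $q^N=\operatorname{curl}(u^N-\alpha^2\Delta u^N)$, which is precisely what the paper imports through equation (4.48) of the cited reference on the way to \eqref{ito formula norm * galerkin step 2}. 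Without identifying this cancellation your estimate does not close.

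Two secondary points. First, the cancellation \eqref{corrector Galerkin} between $F^N$ and the quadratic variation is specific to the $\lambda_i$ weights; with weights $\lambda_i^2$ the two contributions do not recombine into a single $(I-P^N)$ residual, and the paper instead estimates $\sum_k\sum_i\lambda_i^2\langle G^{k,N},e_i\rangle^2$ and $\lVert\operatorname{curl}F^N\rVert_{L^2}$ separately, using the interpolation \eqref{interpolation estimate} to absorb an $\epsilon\lVert u^N\rVert_*^2$ term — you have the right ingredients but the claimed combination is not quite what happens. Second, your closing Gr\"onwall has the random weight $1+\lVert u^N(s)\rVert_V^2$ multiplying $\lVert u^N(s)\rVert_W^2$, whereas Theorem \ref{stochastic gronwall } requires a deterministic $\psi$; the paper avoids this by working up to the stopping times $\tau_M^N$ and closing the $p=2$ estimate directly from the already-proved $V$-bound, with monotone convergence in $M$.
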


\begin{proof}
This proof is similar to Lemma 2.4-2.5 of \cite{razafimandimby2010weak}. We will need some changes due to the poor regularity of the coefficients $F$ and $G^k$. In the part where we will not need any changes, we will refer to the equations in \cite{razafimandimby2010weak}.
Let
\begin{align*}
    \tau_M^N=\inf\{t:\ \lVert u^N(t)\rVert_V+\lVert u^N(t)\rVert_*\geq M \}\wedge T
\end{align*}
and $\Tilde{G}^{k,N}=(I-\alpha^2A)^{-1}G^{k,N}$ the solution of Stokes problem defined in Lemma \ref{Stokes}. From the regularity of the eigenvectors $e_i$, $G^{k,N}\in H^1$, thus $\Tilde{G}^{k,N}\in W$ and by equations \eqref{result inner product eigenfucntions} and \eqref{behave tilde G^k} the following relations hold true \begin{align}
    \langle \Tilde{G}^{k,N},e_i\rangle_W&=\lambda_i\langle G^{k,N},e_i\rangle \label{rel 1 tilde G^kn}\\ 
    \lVert \Tilde{G}^{k,N}\rVert_W&\leq C\lVert \sigma_k\rVert_{W^{1,\infty}}\lVert u^N\rVert_{H^2}.\label{rel 2 tilde G^kn}
\end{align}
Let us call $$\phi^N=-\nu \Delta u^N+\operatorname{curl}(u^N-\alpha^2\Delta u^N)\times u^N-F^N.$$
From the regularity of the $e_i$, we have that $\phi^N\in H^1$. Thus we can find a $v^N\in W$ such that  $v^N=(I-\alpha^2A)^{-1}\phi^N$. We rewrite shortly the weak formulation satisfied by $u^N$ \begin{align*}
    d\langle u^N, e_i\rangle_V+\langle \phi^N,e_i\rangle dt=d\langle u^N, e_i\rangle_V+\langle v^N,e_i\rangle_V dt=\sum_{k\in K}\langle G^{k,N},e_i\rangle dW^k_t
\end{align*}
Multiplying each equation by $\lambda_i$ we get
\begin{align*}
    d\langle u^N, e_i\rangle_W+\langle v^N,e_i\rangle_W dt=\sum_{k\in K}\langle \Tilde{G}^{k,N},e_i\rangle_W dW^k_t.
\end{align*}
Now we apply the It\^o's formula to $\sum_{i=1}^N \langle u^N, e_i\rangle_W^2$ and we obtain
\begin{align*}
    &d (\lVert u^N\rVert_V^2+\lVert u^N\rVert_*^2)+2\left(\langle v^N,u^N\rangle_V+\langle \operatorname{curl}(u^N-\alpha^2\Delta u^N),\operatorname{curl}(v^N-\alpha^2\Delta v^N)\rangle_{L^2}\right)dt=\\& 2\sum_{k\in K} \langle \operatorname{curl}(\Tilde{G}^{k,N}-\alpha^2\Delta \Tilde{G}^{k,N} ),\operatorname{curl}(u^N-\alpha^2\Delta u^N )\rangle_{L^2} dW^k_t\\ & +\sum_{k\in K}\sum_{i=1}^N \lambda_i^2\langle \Tilde{G}^{k,N},e_i \rangle_V^2\ dt+2\sum_{k\in K} \langle \Tilde{G}^{k,N}, u^N\rangle_V dW^k_t.
\end{align*}
Exploiting the definition of $v^N,\ \Tilde{G}^{k,N}$, equation \eqref{trilinear G^{k,N}} and the classical fact that $\operatorname{curl}\nabla=0$ we get
\begin{align}\label{ito formula norm * galerkin step 1}
    &d (\lVert u^N\rVert_V^2+\lVert u^N\rVert_*^2)+2\left(\langle \phi^N,u^N\rangle+\langle \operatorname{curl}(\phi^N),\operatorname{curl}(u^N-\alpha^2\Delta u^N)\rangle_{L^2}\right)dt=\notag\\& 2\sum_{k\in K} \langle \operatorname{curl}({G}^{k,N} ),\operatorname{curl}(u^N-\alpha^2\Delta u^N )\rangle_{L^2} dW^k_t +\sum_{k\in K}\sum_{i=1}^N \lambda_i^2\langle {G}^{k,N},e_i \rangle^2\ dt.
\end{align}
From Lemma \ref{energy in V} we already know that \begin{align*}
    d \lVert u^N\rVert_V^2=-2\nu \lVert \nabla u^N\rVert_{L^2}^2 dt-\sum_{k\in K} b(\sigma_k, u^N,(I-P^N)(I-\alpha^2 A)^{-1}P(\sigma_k\cdot\nabla u^N))dt.
\end{align*} Substituting this relation in the It\^o's formula \eqref{ito formula norm * galerkin step 1} we get
\begin{align}\label{ito formula norm * galerkin step 2}
    &d (\lVert u^N\rVert_*^2)+2\left(\langle \operatorname{curl}(\phi^N),\operatorname{curl}(u^N-\alpha^2\Delta u^N)\rangle_{L^2}\right)dt= \notag\\& 2\sum_{k\in K} \langle \operatorname{curl}({G}^{k,N} ),\operatorname{curl}(u^N-\alpha^2\Delta u^N )\rangle_{L^2} dW^k_t +\sum_{k\in K}\sum_{i=1}^N (\lambda_i+\lambda_i^2)\langle {G}^{k,N},e_i \rangle^2\ dt.
\end{align}
Analogously to equation (4.48) in \cite{razafimandimby2010weak}, the relation below holds true\begin{align*}
    \langle \operatorname{curl}\phi^N,\operatorname{curl}(u^N-\alpha^2\Delta u^N)\rangle_{L^2}=\frac{\nu}{\alpha^2}\lVert u^N\rVert_*^2-\frac{\nu}{\alpha^2}\langle\operatorname{curl}u^N+\frac{\alpha^2}{\nu}\operatorname{curl}F^N,\operatorname{curl}(u^N-\alpha^2\Delta u^N) \rangle_{L^2}.
\end{align*}
Using this relation in the It\^o's formula \eqref{ito formula norm * galerkin step 2} and integrating between $0$ and $t\leq \tau_M^N$ we get
\begin{align}\label{ito formula norm * galerkin step 2.5}
    &\lVert u^N(t)\rVert_*^2+\frac{2\nu}{\alpha^2}\int_0^t \lVert u^N(s)\rVert_*^2 - \sum_{k\in K}\sum_{i=1}^N(\lambda_i+\lambda_i^2)\langle {G}^{k,N}(s),e_i \rangle^2ds\notag\\& =\lVert u^N_0\rVert_*^2+\int_0^t \frac{2\nu}{\alpha^2}\langle\operatorname{curl}u^N(s)+\frac{\alpha^2}{\nu}\operatorname{curl}F^N(s),\operatorname{curl}(u^N(s)-\alpha^2\Delta u^N(s)) \rangle_{L^2} ds\notag\\ &+2\sum_{k\in K} \int_0^t\langle \operatorname{curl}({G}^{k,N}(s) ),\operatorname{curl}(u^N(s)-\alpha^2\Delta u^N(s) )\rangle_{L^2} dW^k_s \notag\\ &\leq \lVert u^N_0\rVert_*^2+\int_0^t \frac{2\nu}{\alpha^2}\lVert \operatorname{curl}u^N(s)\rVert_{L^2}\lVert u^N(s)\rVert_*ds+2\int_0^t \lVert \operatorname{curl}F^N(s)\rVert_{L^2}\lVert u^N(s)\rVert_*ds\notag\\& +2\left\lvert \sum_{k\in K} \int_0^t\langle \operatorname{curl}({G}^{k,N}(s) ),\operatorname{curl}(u^N(s)-\alpha^2\Delta u^N(s) )\rangle_{L^2} dW^k_s \right\rvert.
\end{align}
Taking the supremum between $0$ and $r\wedge \tau_M^N$ in relation \eqref{ito formula norm * galerkin step 2.5} and, then, the expected value we get
\begin{align}\label{ito formula norm * galerkin step 3}
    &\mathbb{E}\left[\operatorname{sup}_{t\leq r\wedge \tau_M^N}\lVert u^N(t)\rVert_*^2\right]+\frac{2\nu}{\alpha^2}\mathbb{E}\left[\int_0^{r\wedge \tau_M^N} \lVert u^N(s)\rVert_*^2ds\right] \notag\\& \leq2 \mathbb{E}\left[\lVert u^N_0\rVert_*^2\right]+\mathbb{E}\left[\int_0^{r\wedge \tau_M^N} \frac{4\nu}{\alpha^2}\lVert \operatorname{curl}u^N(s)\rVert_{L^2}\lVert u^N(s)\rVert_*ds\right]+4\mathbb{E}\left[\int_0^{r\wedge \tau_M^N} \lVert \operatorname{curl}F^N(s)\rVert_{L^2}\lVert u^N(s)\rVert_*ds\right]\notag\\& +4\mathbb{E}\left[\left\lvert \sum_{k\in K} \int_0^{r\wedge \tau_M^N}\langle \operatorname{curl}({G}^{k,N}(s) ),\operatorname{curl}(u^N(s)-\alpha^2\Delta u^N(s) )\rangle_{L^2} dW^k_s\right\rvert\right]\notag\\ & \leq 2\mathbb{E}\left[\lVert u^N_0\rVert_*^2\right]+4\mathbb{E}\left[\operatorname{sup}_{t\leq r\wedge \tau_M^N}\left\lvert \sum_{k\in K} \int_0^{t}\langle \operatorname{curl}({G}^{k,N}(s) ),\operatorname{curl}(u^N(s)-\alpha^2\Delta u^N(s) )\rangle_{L^2} dW^k_s\right\rvert\right]\notag\\ &+\mathbb{E}\left[\int_0^{r\wedge \tau_M^N} \left(\frac{2\nu\epsilon_1}{\alpha^2}+2\epsilon_2\right)\lVert u^N(s)\rVert_*^2 ds\right]+\mathbb{E}\left[\int_0^{r\wedge \tau_M^N}\frac{2\nu}{\alpha^2\epsilon_1}\lVert \operatorname{curl}u^N(s)\rVert_{L^2}^2ds\right]\notag\\ &+ \mathbb{E}\left[\int_0^{r\wedge \tau_M^N}\frac{2}{\epsilon_2}\lVert \operatorname{curl} F^N(s)\rVert_{L^2}^2 ds \right]+2\sum_{k\in K}\sum_{i=1}^N(\lambda_i+\lambda_i^2)\mathbb{E}\left[\int_0^{r\wedge \tau_M^N}\langle {G}^{k,N}(s),e_i \rangle^2ds\right].
\end{align}
Choosing $\epsilon_1=\frac{1}{4}$ and $\epsilon_2=\frac{\nu}{4\alpha^2}$ we arrive at
\begin{align}\label{energy star}
    &\mathbb{E}\left[\operatorname{sup}_{t\leq r\wedge \tau_M^N}\lVert u^N(t)\rVert_*^2\right]+\frac{\nu}{\alpha^2}\mathbb{E}\left[\int_0^{r\wedge \tau_M^N} \lVert u^N(s)\rVert_*^2ds\right] \notag\\ & \leq 2\mathbb{E}\left[\lVert u^N_0\rVert_*^2\right]+4\mathbb{E}\left[\operatorname{sup}_{t\leq r\wedge \tau_M^N}\left\lvert \sum_{k\in K} \int_0^{t}\langle \operatorname{curl}({G}^{k,N}(s) ),\operatorname{curl}(u^N(s)-\alpha^2\Delta u^N(s) )\rangle_{L^2} dW^k_s\right\rvert\right]\notag\\ &+\mathbb{E}\left[\int_0^{r\wedge \tau_M^N}\frac{8\nu}{\alpha^2}\lVert \operatorname{curl}u^N(s)\rVert_{L^2}^2ds\right]+ \mathbb{E}\left[\int_0^{r\wedge \tau_M^N}\frac{8\alpha^2}{\nu}\lVert \operatorname{curl} F^N(s)\rVert_{L^2}^2 ds \right]+2\sum_{k\in K}\sum_{i=1}^N(\lambda_i+\lambda_i^2)\mathbb{E}\left[\int_0^{r\wedge \tau_M^N}\langle {G}^{k,N}(s),e_i \rangle^2ds\right]
\end{align}
From equations \eqref{corrector Galerkin} and \eqref{G^k H H^2} we know that
\begin{align}\label{estimate 1}
    \sum_{k\in K}\sum_{i=1}^N\lambda_i\mathbb{E}\left[\int_0^{r\wedge \tau_M^N}\langle {G}^{k,N}(s),e_i \rangle^2ds\right]=&\sum_{k\in K}\mathbb{E}\left[\int_0^{r\wedge \tau_M^N} b(\sigma_k,u^N(s),P^N(I-\alpha^2A)^{-1}P(\sigma_k\cdot\nabla u^N(s)))ds\right]\notag\\ & \leq \sum_{k\in K} \lVert \sigma_k\rVert_{L^{\infty}} \mathbb{E}\left[\int_0^{r\wedge \tau_M^N}\lVert \nabla u^N(s)\rVert_{L^2} \lVert (I-\alpha^2A)^{-1}P(\sigma_k\cdot \nabla u^N(s))\rVert_V ds\right]\notag\\ & \leq \sum_{k\in K} \lVert \sigma_k\rVert_{L^{\infty}} \mathbb{E}\left[\int_0^{r\wedge \tau_M^N}\lVert \nabla u^N(s)\rVert_{L^2} \lVert (I-\alpha^2A)^{-1/2}P(\sigma_k\cdot \nabla u^N(s))\rVert ds\right]\notag\\ & \leq \sum_{k\in K} \lVert \sigma_k\rVert_{L^{\infty}}^2 \mathbb{E}\left[\int_0^{r\wedge \tau_M^N}\lVert \nabla u^N(s)\rVert_{L^2}^2 ds \right].
\end{align}
Thanks to equations \eqref{rel 1 tilde G^kn}, \eqref{rel 2 tilde G^kn}, the interpolation estimate \eqref{interpolation estimate} and relation \eqref{scaling H^3} we have
\begin{align}\label{estimate 2}
\sum_{k\in K}\sum_{i=1}^N\lambda_i^2\mathbb{E}\left[\int_0^{r\wedge \tau_M^N}\langle {G}^{k,N}(s),e_i \rangle^2ds\right]=&  \sum_{k\in K}\sum_{i=1}^N\mathbb{E}\left[\int_0^{r\wedge \tau_M^N}\langle \Tilde{G}^{k,N}(s),e_i \rangle_W^2ds\right]\notag\\ & \leq \sum_{k\in K}  \mathbb{E}\left[\int_0^{r\wedge \tau_M^N}\lVert \Tilde{G}^{k,N}(s)\rVert_W^2ds\right]\notag\\ & \leq C\sum_{k\in K}\lVert \sigma_k\rVert_{W^{1,\infty}}^2 \mathbb{E}\left[\int_0^{r\wedge \tau_M^N}\lVert u^{N}(s)\rVert_{H^2}^2ds\right]\notag\\ & \leq C\sum_{k\in K}\lVert \sigma_k\rVert_{W^{1,\infty}}^2 \mathbb{E}\left[\int_0^{r\wedge \tau_M^N}\lVert \nabla u^{N}(s)\rVert_{L^2}\lVert u^{N}(s)\rVert_{H^3}ds\right]\notag\\ & \leq C\sum_{k\in K}\lVert \sigma_k\rVert_{W^{1,\infty}}^2 \mathbb{E}\left[\int_0^{r\wedge \tau_M^N}\lVert \nabla u^{N}(s)\rVert_{L^2}\left(\frac{\alpha^2+1}{\alpha^2}\lVert \nabla u^{N}(s)\rVert_{L^2}+\frac{1}{\alpha^2}\lVert u^N(s)\rVert_*\right)ds\right]\notag\\ & \leq \sum_{k\in K}\lVert \sigma_k\rVert_{W^{1,\infty}}^2 \mathbb{E}\left[\int_0^{r\wedge \tau_M^N} C\frac{\alpha^2+1+\frac{1}{\epsilon}}{\alpha^2}\lVert \nabla u^{N}(s)\rVert_{L^2}^2+\frac{\epsilon}{\alpha^2}\lVert u^N(s)\rVert_*^2 ds\right].
\end{align}
Thanks to Burkholder-Davis-Gundy inequality, equation \eqref{behave G^k H^2 H^1}, the interpolation inequality \eqref{interpolation estimate} and relation \eqref{scaling H^3} we get
\begin{align}\label{estimate 3}
    &4\mathbb{E}\left[\operatorname{sup}_{t\leq r\wedge \tau_M^N}\left\lvert \sum_{k\in K} \int_0^{t}\langle \operatorname{curl}({G}^{k,N}(s) ),\operatorname{curl}(u^N(s)-\alpha^2\Delta u^N(s) )\rangle_{L^2} dW^k_s\right\rvert\right]\notag\\ &\leq C \mathbb{E}\left[\left(\sum_{k\in K} \int_0^{r\wedge \tau_M^N}\lVert \operatorname{curl}G^{k,N}(s)\rVert_{L^2}^2\lVert u^N(s)\rVert_*^2ds\right)^{1/2}\right]\notag\\ &\leq C \mathbb{E}\left[\operatorname{sup}_{t\leq r\wedge \tau_M^N}\lVert u^N(t)(s)\rVert_*\left(\sum_{k\in K} \int_0^{r\wedge \tau_M^N}\lVert \operatorname{curl}G^{k,N}(s)\rVert_{L^2}^2 ds\right)^{1/2}\right]\notag\\  &\leq \frac{1}{2}\mathbb{E}\left[\operatorname{sup}_{t\leq r\wedge \tau_M^N}\lVert u^N(t)\rVert_*^2\right]+C\mathbb{E}\left[\sum_{k\in K} \int_0^{r\wedge \tau_M^N}\lVert \operatorname{curl}G^{k,N}(s)\rVert_{L^2}^2 ds \right] \notag\\ & \leq \frac{1}{2}\mathbb{E}\left[\operatorname{sup}_{t\leq r\wedge \tau_M^N}\lVert u^N(t)\rVert_*^2\right]+C\sum_{k\in K}\lVert \sigma_k\rVert_{W^{1,\infty}}^2\mathbb{E}\left[\int_0^{r\wedge \tau_M^N} \lVert  u^{N}(s)\rVert_{H^2}^2 ds\right]\notag\\  &\leq\frac{1}{2}\mathbb{E}\left[\operatorname{sup}_{t\leq r\wedge \tau_M^N}\lVert u^N(t)\rVert_*^2\right]+\sum_{k\in K}\lVert \sigma_k\rVert_{W^{1,\infty}}^2 \mathbb{E}\left[\int_0^{r\wedge \tau_M^N} C\frac{\alpha^2+1+\frac{1}{\epsilon}}{\alpha^2}\lVert \nabla u^{N}(s)\rVert_{L^2}^2+\frac{\epsilon}{\alpha^2}\lVert u^N(s)\rVert_*^2 ds\right].
\end{align}

Lastly, thanks to equation \eqref{behave F V H1} we have
\begin{align}\label{estimate 4}
    \mathbb{E}\left[\int_0^{r\wedge \tau_M^N}\frac{8\alpha^2}{\nu}\lVert \operatorname{curl} F^N(s)\rVert_{L^2}^2 ds \right]& \leq \frac{C}{\nu\alpha^2}\left(\sum_{k\in K}\lVert \sigma_k\rVert_{L^{\infty}}\lVert \sigma_k\rVert_{W^{1,\infty}}\right)^2\mathbb{E}\left[\int_0^{r\wedge \tau_M^N}\lVert \nabla u^N(s)\rVert_{L^2}^2 ds\right].
\end{align}
Combining estimates \eqref{estimate 1},\eqref{estimate 2},\eqref{estimate 3},\eqref{estimate 4} above we obtain
\begin{align}\label{final norm W 2}
    &\mathbb{E}\left[\operatorname{sup}_{t\leq r\wedge \tau_M^N}\lVert u^N(t)\rVert_*^2\right]+\frac{\nu-\epsilon\sum_{k\in K}\lVert \sigma_k\rVert_{W^{1,\infty}}^2}{\alpha^2}\mathbb{E}\left[\int_0^{r\wedge \tau_M^N} \lVert u^N(s)\rVert_*^2ds\right] \notag\\ & \leq C\left(\mathbb{E}\left[\lVert u^N_0\rVert_*^2\right]+\frac{\alpha^2+1+\frac{1}{\epsilon}}{\alpha^2}\sum_{k\in K}\lVert \sigma_k\rVert_{W^{1,\infty}}^2 \mathbb{E}\left[\int_0^{r\wedge \tau_M^N} \lVert \nabla u^{N}(s)\rVert_{L^2}^2\right]\right)\notag\\ & +\frac{C}{\nu\alpha^2}\left(\sum_{k\in K}\lVert \sigma_k\rVert_{L^{\infty}}\lVert \sigma_k\rVert_{W^{1,\infty}}\right)^2\mathbb{E}\left[\int_0^{r\wedge \tau_M^N}\lVert \nabla u^N(s)\rVert_{L^2}^2 ds\right].
\end{align}

Therefore, choosing $\epsilon$ small enough, by equation \eqref{grownall V 2 galerkin} we have
\begin{align*}
    \mathbb{E}\left[\operatorname{sup}_{t\leq r\wedge \tau_M^N}\lVert u^N(t)\rVert_*^2\right]+\frac{\nu}{2\alpha^2}\mathbb{E}\left[\int_0^{r\wedge \tau_M^N} \lVert u^N(s)\rVert_*^2ds\right]\leq C_{\nu, \alpha,\{\sigma_k\}_{k\in K}} \ \text{indipendent from }M,\ N.
\end{align*}
Last inequality proves the Lemma for $p=2$, letting $M$ to $+\infty$ thanks to monotone convergence Theorem. Now we consider $p\geq 4$ and we restart from equation (4.79) in \cite{razafimandimby2010weak}.
\begin{align*}
    \lVert u^N(t)\rVert_*^p &\leq C\lVert u^N_0\rVert_*^p+C \bigg(\int_0^t \lVert u^N(s)\rVert_*^{p/2-2} \\ &\times \bigg(2\langle \operatorname{curl}F^N(s),\operatorname{curl}(u^N(s)-\alpha^2\Delta u^N(s))\rangle_{L^2}\\ & +\frac{1}{2}\sum_{k\in K}\sum_{i=1}^N(\lambda_i+\lambda_i^2) \langle G^{k,N}(s),e_i\rangle^2\\ &+\frac{2\nu}{\alpha^2}\langle \operatorname{curl}u^N(s),\operatorname{curl}(u^N(s)-\alpha^2\Delta u^N(s))\rangle_{L^2} -\frac{2\nu}{\alpha^2}\lVert u^N(s)\rVert_*^2\\ & +\frac{p-4}{p}\sum_{k\in K}\frac{\langle \operatorname{curl}G^{k,N}(s),\operatorname{curl}(u^N(s)-\alpha^2\Delta u^N(s))\rangle_{L^2}^2}{\lVert u^N(s)\rVert_*^2}\bigg)ds\bigg)^2\\ & + \left(\int_0^t\lVert u^N(s)\rVert_*^{p/2-2}\sum_{k\in K}\langle \operatorname{curl}G^{k,N}(s),\operatorname{curl}(u^N(s)-\alpha^2\Delta u^N(s))\rangle_{L^2} dW^k_s\right)^2.
\end{align*}
Let us consider all the terms, one by one. Arguing as before we have
\begin{align*}
\sum_{k\in K}\sum_{i=1}^N(\lambda_i+\lambda_i^2)\langle {G}^{k,N}(s),e_i \rangle^2\leq C_{\epsilon,\alpha,\{\sigma_k\}_{k\in K}}\lVert u^N(s)\rVert_V^2+\epsilon \lVert u^N(s)\rVert_*^2,
\end{align*}
\begin{align*}
    \lvert \langle \operatorname{curl}u^N(s),\operatorname{curl}(u^N(s)-\alpha^2\Delta u^N(s))\rangle_{L^2}\rvert & \leq C_{\alpha}(1+\lVert u^N(s)\rVert_V)(1+\lVert u^N(s)\rVert_W),
\end{align*}
\begin{align*}
    \lvert\langle \operatorname{curl}F^N(s),\operatorname{curl}(u^N(s)-\alpha^2\Delta u^N(s))\rangle_{L^2}\leq  C_{\alpha,\{\sigma_k\}_{k\in K}}(1+\lVert u^N(s)\rVert_V)(1+\lVert u^N(s)\rVert_W),
\end{align*}
\begin{align*}
    \left\lvert \frac{\langle \operatorname{curl}G^{k,N}(s),\operatorname{curl}(u^N(s)-\alpha^2\Delta u^N(s))\rangle_{L^2}^2}{\lVert u^N(s)\rVert_*^2}\right\rvert & \leq \lVert \operatorname{curl} G^{k,N}(s)\rVert_{L^2}^2\\ & \leq C_{\epsilon, \alpha,\{\sigma_k\}_{k\in K}}(1+\lVert u^N(s)\rVert_V)^2+\epsilon\rVert u^N(s)\rVert_W^2.
\end{align*}
Exploiting the relations above and the continuous embedding $W\hookrightarrow V$ we get
\begin{align*}
    \lVert u^N(t)\rVert_*^p &\leq C\lVert u^N_0\rVert_*^p+ C_{\epsilon,\nu,\alpha,\{\sigma_k\}_{k\in K}} \left(\int_0^t \lVert u^N(s)\rVert_*^{p/2-2} (1+\lVert u^N(s)\rVert_W)^2ds\right)^2\\ &+\left(\int_0^t\lVert u^N(s)\rVert_*^{p/2-2}\sum_{k\in K}\langle \operatorname{curl}G^{k,N}(s),\operatorname{curl}(u^N(s)-\alpha^2\Delta u^N(s))\rangle_{L^2} dW^k_s\right)^2.
\end{align*}
Thus taking the supremum in time for $t\leq r$ and the expected value of this we get the thesis via Grownall's Lemma arguing exactly as in the proof of Lemma 4.3 in \cite{razafimandimby2010weak} and exploiting previous estimate \eqref{estimate 3} on $\langle \operatorname{curl}G^{k,N}(s),\operatorname{curl}(u^N(s)-\alpha^2\Delta u^N(s))\rangle_{L^2}$.
\end{proof}
\begin{remark}\label{changes Lemma energy W} 
In case of $\nu=0 $, arguing as above we get 
\begin{align}
    \mathbb{E}\left[\operatorname{sup}_{t\leq r\wedge \tau_M^N}\lVert u^N(t)\rVert_*^2\right]& \leq 2\mathbb{E}\left[\lVert u^N_0\rVert_*^2\right]+\mathbb{E}\left[\int_0^{r\wedge \tau_M^N} \lVert u^N(s)\rVert_*^2 ds\right]\notag\\ & +4\mathbb{E}\left[\operatorname{sup}_{t\leq r\wedge \tau_M^N}\left\lvert \sum_{k\in K} \int_0^{t}\langle \operatorname{curl}({G}^{k,N}(s) ),\operatorname{curl}(u^N(s)-\alpha^2\Delta u^N(s) )\rangle_{L^2} dW^k_s\right\rvert\right]\notag\\ &+ 4\mathbb{E}\left[\int_0^{r\wedge \tau_M^N}\lVert \operatorname{curl} F^N(s)\rVert_{L^2}^2 ds \right]+2\sum_{k\in K}\sum_{i=1}^N(\lambda_i+\lambda_i^2)\mathbb{E}\left[\int_0^{r\wedge \tau_M^N}\langle {G}^{k,N}(s),e_i \rangle^2ds\right]\label{ito formula norm * galerkin step 3 remark}.
\end{align}
Therefore, thanks to Lemma \ref{energy in V} and estimates \eqref{estimate 1},\eqref{estimate 2},\eqref{estimate 3},\eqref{estimate 4} we obtain 

\begin{align}\label{final norm W 2 remark}
    \mathbb{E}\left[\operatorname{sup}_{t\leq r\wedge \tau_M^N}\lVert u^N(t)\rVert_*^2\right]& \leq   \frac{\sum_{k\in K}\lVert \sigma_k\rVert_{W^{1,\infty}}^2}{\alpha^2}\mathbb{E}\left[\int_0^{r\wedge \tau_M^N} \lVert u^N(s)\rVert_*^2ds\right]\notag\\ &+ C\left(\mathbb{E}\left[\lVert u^N_0\rVert_*^2\right]+\frac{\alpha^2+1}{\alpha^2}\sum_{k\in K}\lVert \sigma_k\rVert_{W^{1,\infty}}^2 \mathbb{E}\left[\int_0^{r\wedge \tau_M^N} \lVert \nabla u^{N}(s)\rVert_{L^2}^2\right]\right)\notag\\ & +\frac{C}{\alpha^4}\left(\sum_{k\in K}\lVert \sigma_k\rVert_{L^{\infty}}\lVert \sigma_k\rVert_{W^{1,\infty}}\right)^2\mathbb{E}\left[\int_0^{r\wedge \tau_M^N}\lVert \nabla u^N(s)\rVert_{L^2}^2 ds\right]\notag\\ & \leq C_{\alpha,\{\sigma_k\}_{k\in K}}+C_{\alpha,\{\sigma_k\}_{k\in K}}\mathbb{E}\left[\int_0^{r\wedge \tau_M^N} \lVert u^N(s)\rVert_*^2ds\right]\notag \\ & \leq C_{\alpha,\{\sigma_k\}_{k\in K}}+C_{\alpha,\{\sigma_k\}_{k\in K}}\int_0^r \mathbb{E}\left[ \lVert u^N(s)\rVert_*^21_{[0,\tau_M^N]}(s)ds\right] ds.
\end{align}
Since $\mathbb{E}\left[\operatorname{sup}_{t\leq r\wedge \tau_M^N}\lVert u^N(t)\rVert_*^2\right]=\mathbb{E}\left[\operatorname{sup}_{t\leq r}\lVert u^N(t)\rVert_*^21_{[0,\tau_M^N]}(t)\right],$
by Gr\"onwall's Lemma 
 \begin{align*}
     \mathbb{E}\left[\operatorname{sup}_{t\in [0,T\wedge\tau_M^N]}\lVert u^N(t)\rVert_W^2\right]\leq C_{\alpha,\{\sigma_k\}_{k\in K}} \textit{ indipendent from }M,\ N.
 \end{align*}
 Last inequality proves the Lemma for $p=2$, letting $M$ to $\infty$ thanks to monotone convergence Theorem. The case $p\geq 4$ can be treated as in the case $\nu>0$, therefore we do not add other details.
\end{remark}
Let us now introduce the operator $\hat{A}=(I-\alpha^2A)^{-1}A$. By Lemma \ref{nonlinearity} and Lemma \ref{Stokes} the weak formulation satisfied by the Galerkin approximations can be rewritten as \begin{align*}
    \langle u^N(t), e_i\rangle_V-  \langle u^N_0, e_i\rangle_V&=\nu \int_0^t \langle \hat{A} u^N(s),  e_i\rangle_V ds- \int_0^t \langle \hat{B}(u^N(s),u^N(s)),e_i\rangle_{W^*,W}ds+\int_0^t \langle F^N(s),e_i \rangle ds \\ &+ \sum_{k\in K} \int_0^t \langle G^{k,N}(s),e_i \rangle  dW^k_s \ \ \mathbb{P}-a.s.
\end{align*}
Thanks to relations \eqref{norm V galerkin},\eqref{norm W Galerkin} and the continuity of $B,\ F$ and $G^k$, we know that exists a subsequence of the Galerkin approximations, which we will denote again by $u^N$ just for simplicity, and processes $u$ and $\hat{B}^*$ such that \begin{align}\label{convergences}\begin{cases}
    u^N\stackrel{*}{\rightharpoonup}u  \text{ in } L^p(\Omega,\mathcal{F},\mathbb{P};L^{\infty}(0,T;W)),\ \ p\geq 2\\
    u^N{\rightharpoonup}u  \text{ in } L^p(\Omega,\mathcal{F},\mathbb{P};L^{q}(0,T;V)),\ \ \ p,\ q\geq 2\\ 
    \hat{A}u^N{\rightharpoonup}\hat{A}u  \text{ in } L^2(\Omega,\mathcal{F},\mathbb{P};L^{2}(0,T;V))\\
    \hat{B}(u^N,u^N)\rightharpoonup \hat{B}^* \text{ in } L^2(\Omega,\mathcal{F},\mathbb{P};L^{2}(0,T;W^*))\\
    F(u^N)\rightharpoonup F(u)  \text{ in } L^2(\Omega,\mathcal{F},\mathbb{P};L^{2}(0,T;H\cap H^1))\\ 
    G^k(u^N)\rightharpoonup G^k(u)  \text{ in } L^2(\Omega,\mathcal{F},\mathbb{P};L^{2}(0,T;H))
\end{cases}
    \end{align}

Next step will be showing that $\hat{B}^*=\hat{B}(u,u)$. In this way the existence of a solution of equation (\ref{Ito}) will follow. In fact, we know that $\mathbb{P}-a.s.$ for each $i\in \mathbb{N}$,\ for each $t\in [0,T]$
\begin{align*}
    \langle u(t), e_i\rangle_V-  \langle u_0, e_i\rangle_V&=\nu \int_0^t \langle \hat{A} u(s),  e_i\rangle_V ds- \int_0^t \langle \hat{B}^*(s),e_i\rangle_{W^*,W}ds+\int_0^t \langle F(u(s)),e_i \rangle ds \\ &+ \sum_{k\in K} \int_0^t \langle G^{k}(u(s)),e_i \rangle  dW^k_s. \ \
\end{align*}
For what concern the continuity in $V$ we can argue in the following way via It\^o's formula and Kolmogorov continuity Theorem.
From the weak formulation above we get the weak continuity in $V$ of $u$ applying the Kolmogorov continuity Theorem for the SDE satisfied by $\langle u(t), e_i\rangle_V$, applying the It\^o's formula to $\lVert u\rVert_V^2$ we get
\begin{align*}
    d \lVert u\rVert_V^2=-2\nu\lVert\nabla u\rVert_{L^2}^2 dt -\langle \hat{B}^*,u\rangle_{W^*,W}dt. 
\end{align*}
From this, we get the continuity of $\lVert u\rVert_V^2$ thanks to the integrability properties of $u$. Weak continuity and continuity of the norm implies strong continuity, thus we have the strong continuity of $u$ as a process taking values in $V$. Weak continuity of $u$ as a process taking values in $W$ follows from Lemma 1.4, pag. 263 in \cite{temam2001navier}. Alternatively the strong continuity in $V$ of $u$ follows arguing as in \cite{breckner2000galerkin} or \cite{pardoux1975equations}.
\subsection{Existence, Uniqueness and Further Results}
To prove the existence of the solutions of equations (\ref{Ito}) we need the following Lemma. As stated in Section \ref{results}, this way of proceed  has been introduced in \cite{breckner2000galerkin} for Navier-Stokes equations.
\begin{Lemma}\label{crucial lemma}
Let $$\tau_M=\operatorname{inf}\{t\in [0,T]: \lVert u(t)\rVert_V+\lVert u(t)\rVert_*\geq M\}\wedge T$$
then \begin{align*}
    1_{t\leq \tau_M}(u^N-u)\rightarrow 0 \text{\ in\ } L^2(\Omega,\mathcal{F},\mathbb{P};L^2(0,T;V)).
\end{align*}
\end{Lemma}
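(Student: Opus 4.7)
The strategy is to prove a Gronwall estimate for
\[ Z^N(t)\;=\;\mathbb{E}\bigl\|u^N(t\wedge\tau_M)-u(t\wedge\tau_M)\bigr\|_V^2, \]
along the Breckner scheme of \cite{breckner2000galerkin} adapted in \cite{razafimandimby2012strong}. The stopping time $\tau_M$ is what turns the a priori unbounded norms of the limit into a deterministic constant, which is essential to close the nonlinear estimate. Since both $u^N$ and $P^N u$ (the $V$-orthogonal projection of $u$ onto $W^N$, which coincides with the $W$-projection by Lemma \ref{lemma eigenfunctions}) lie in $W^N$, it is convenient to split $\|u^N-u\|_V^2=\|u^N-P^Nu\|_V^2+\|P^Nu-u\|_V^2$. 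The second summand vanishes in $L^2(\Omega;L^2(0,T\wedge\tau_M;V))$ by dominated convergence, using pointwise convergence $P^Nu\to u$ in $V$ and the uniform bound $\|P^Nu\|_V\leq\|u\|_V\leq M$ on $\tau_M$. Hence it suffices to control the first summand.

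The key computation applies It\^o's formula to $\|u^N-P^Nu\|_V^2=\sum_{i=1}^N\lambda_i\langle u^N-u,e_i\rangle_V^2$, using the Galerkin SDE for $\langle u^N,e_i\rangle_V$ and the limit weak formulation tested against $e_i\in W^N$. Stopping at $\tau_M$ and taking expectations produces four contributions:
(i) the viscous drift contributes $-2\nu\,\mathbb{E}\!\int\!\|\nabla(u^N-P^Nu)\|_{L^2}^2\,ds\leq 0$ (helpful);
(ii) the $F$-drift plus the quadratic-variation corrector combine, by linearity of $F,\,G^k$ and Corollary \ref{regularity of G^k, F}, into a term bounded by $C_{\alpha,\{\sigma_k\}}\,\mathbb{E}\!\int\!\|u^N-P^Nu\|_V^2\,ds$ plus a residual $\mathcal{O}(\|P^Nu-u\|_V)$ which is $o(1)$ in $N$;
(iii) the stochastic integrals have zero expectation;
(iv) the nonlinear drift $-2\,\mathbb{E}\!\int\!\langle\hat B(u^N,u^N)-\hat B^*,\,u^N-P^Nu\rangle_{W^*,W}\,ds$, which must be proved to give either a $C_M\,\|u^N-P^Nu\|_V^2$ Gronwall contribution or a vanishing error $\varepsilon_N$.

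The nonlinear term is the heart of the argument. We split
\[ \hat B(u^N,u^N)-\hat B^*\;=\;\bigl[\hat B(u^N,u^N)-\hat B(P^Nu,P^Nu)\bigr]\;+\;\bigl[\hat B(P^Nu,P^Nu)-\hat B^*\bigr]. \]
The first bracket is bilinearly expanded into $\hat B(u^N-P^Nu,u^N)+\hat B(P^Nu,u^N-P^Nu)$; by the antisymmetry \eqref{antisimmetry hatB} the second of these, paired against $u^N-P^Nu$, vanishes, while the first is bounded via \eqref{inequality trilinear 1} by $C(\|u^N\|_W+\|P^Nu\|_W)\|u^N-P^Nu\|_V^2$, and on $\tau_M$ the factor in parentheses is deterministically dominated by $C_M$, thanks to the stopping-time bound $\|u\|_V+\|u\|_*\leq M$ transferred to $P^Nu$ and to $u^N$ via the uniform Galerkin estimates of Lemma \ref{energy in W}. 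The second bracket, paired against $u^N-P^Nu$, yields an $\varepsilon_N$: indeed, $\hat B(P^Nu,P^Nu)\to\hat B(u,u)$ strongly in $L^2(\Omega;L^2(0,T;W^*))$ (by continuity of $\hat B$ and the strong convergence $P^Nu\to u$ in $W$), and against the $W$-weakly-convergent-to-zero sequence $u^N-P^Nu$ this tends to zero; the remaining $\bigl[\hat B(u,u)-\hat B^*\bigr]$-pairing is handled by the weak-strong duality between $\hat B^*-\hat B(u,u)\in L^2(W^*)$ and $u^N-P^Nu\rightharpoonup 0$ in $L^2(W)$, taken from the weak convergences of \eqref{convergences}.

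Combining (i)--(iv) gives
\[ Z^N_P(t)\leq \mathbb{E}\|u_0^N-P^Nu_0\|_V^2\;+\;C_M\!\int_0^t Z^N_P(s)\,ds\;+\;\varepsilon_N(t),\qquad Z^N_P(t):=\mathbb{E}\|u^N(t\wedge\tau_M)-P^Nu(t\wedge\tau_M)\|_V^2, \]
with $\varepsilon_N(t)\to 0$. Since $u_0^N\to u_0$ in $W\hookrightarrow V$, the initial term vanishes, and Gronwall's lemma yields $Z^N_P(t)\to 0$ uniformly in $t\leq T$; integrating in $t$ and combining with the vanishing of $\|P^Nu-u\|_V$ produces the claimed convergence. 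The principal obstacle is item (iv): the cancellation of the nonlinear contribution against the unidentified weak limit $\hat B^*$---this is precisely the step that later enables the identification $\hat B^*=\hat B(u,u)$ in the passage to the limit of the Galerkin equations.
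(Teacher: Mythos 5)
Your overall architecture is the same as the paper's (It\^o's formula for the $V$-norm of $u^N-P^Nu$, stopping at $\tau_M$, weak--strong duality to kill the $\hat B^*$ pairing, Gr\"onwall; the paper packages the Gr\"onwall step as an exponential weight $\sigma(t)=\exp(-\eta_1t-\eta_2\int_0^t\lVert u(s)\rVert_W^2\,ds)$, which is cosmetic here). However, there is a genuine gap in your item (iv). You expand $\hat B(u^N,u^N)-\hat B(P^Nu,P^Nu)=\hat B(u^N-P^Nu,u^N)+\hat B(P^Nu,u^N-P^Nu)$ and, after the antisymmetry cancellation, the surviving term is $\langle\hat B(w,u^N),w\rangle$ with $w=u^N-P^Nu$; the cancellation estimate (the one used in the paper's uniqueness proof, obtained from \eqref{equvalence hatB and b} together with \eqref{inequality trilinear 2} --- not from \eqref{inequality trilinear 1}, which would put the $H^3$ norm on the difference $w$ and ruin the homogeneity) bounds this by $C\lVert w\rVert_V^2\lVert u^N\rVert_W$. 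The factor is $\lVert u^N\rVert_W$, and your claim that it is ``deterministically dominated by $C_M$'' on $\{t\le\tau_M\}$ is false: $\tau_M$ is built from $u$ alone, so it gives no pathwise control on $u^N$, and Lemma \ref{energy in W} supplies only moment bounds $\mathbb{E}[\sup_t\lVert u^N(t)\rVert_W^p]\le C$, uniform in $N$ but useless for producing an almost-sure constant inside a Gr\"onwall integrand. With $\lVert u^N\rVert_W$ in the prefactor the inequality $\mathbb{E}\int_0^{t\wedge\tau_M}\lVert u^N\rVert_W\lVert w\rVert_V^2\,ds\le C_M\int_0^tZ_P^N(s)\,ds$ cannot be justified, and the Gr\"onwall loop does not close.

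The repair is to expand in the other order, $\hat B(u^N,u^N)-\hat B(P^Nu,P^Nu)=\hat B(u^N,u^N-P^Nu)+\hat B(u^N-P^Nu,P^Nu)$: the first term dies against $w$ by \eqref{antisimmetry hatB}, and the survivor is $\langle\hat B(w,P^Nu),w\rangle\le C_B\lVert w\rVert_V^2\lVert P^Nu\rVert_W\le C_B\lVert w\rVert_V^2\lVert u\rVert_W$, where now the large factor is controlled by the \emph{limit} process and hence by $CM$ on $[0,\tau_M]$. This is exactly the arrangement in the paper's estimate, which keeps only $\lVert P^Nu\rVert_W^2$ after Young's inequality and absorbs it with the weight $\sigma(t)$ (chosen with $\eta_2=C_B^2$), finally using $\sigma(t)\ge C_M>0$ on $[0,\tau_M]$. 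The rest of your proposal --- the $V$-orthogonal splitting via Lemma \ref{lemma eigenfunctions}, the treatment of the $F$ and $G^k$ terms, the vanishing of the initial term since $u_0^N=P^Nu_0$, and the strong-times-weak argument for $\hat B(P^Nu,P^Nu)-\hat B^*$ --- matches the paper and is sound, modulo the usual care in checking that the error $\varepsilon_N(t)$ is handled uniformly enough in $t$ before invoking Gr\"onwall.
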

\begin{proof}
Let $P^N$ be the projection of $W$ on $W^N=\operatorname{span}\{e_1,\dots, e_N\}.$ Thanks to dominated convergence Theorem, 
\begin{align}\label{preliminary observation P^N}
    P^N w\rightarrow w \quad \text{in }L^r(\Omega,L^q(0,T;W)) \quad \text{ if } r,\ q\in [1,+\infty)\text{ and } w\in L^r(\Omega,L^q(0,T;W)).
\end{align}
Consequently we have also convergence in $L^r(\Omega,L^q(0,T;V))$. Moreover, if $w\in W,\ i\leq N,\ \langle P^Nw,e_i\rangle_V=\langle w,e_i\rangle_V$. Let $\hat{F}(u)=(I-\alpha^2 A)^{-1}F(u)$, $\hat{G}^k(u)=(I-\alpha^2 A)^{-1}G^k(u)$. From the weak formulation satisfied by $u$, for each $i\leq N$, we get
\begin{align*}
    \langle P^N u(t), e_i\rangle_V-  \langle u^N_0, e_i\rangle_V&=\nu \int_0^t \langle P^N\hat{A} u(s),  e_i\rangle_V ds- \int_0^t \langle \hat{B}^*(s),e_i\rangle_{W^*,W}ds+\int_0^t \langle \hat{F}(u(s)),e_i \rangle_V ds \\ &+ \sum_{k\in K} \int_0^t \langle \hat{G}^k(u(s)),e_i \rangle  dW^k_s \ \ \mathbb{P}-a.s.
\end{align*}
Exploiting the relation satisfied by $u^N$, we get
\begin{align}\label{preliminary Ito crucial Lemma}
    \langle P^N u(t)-u^N(t), e_i\rangle_V&=\nu \int_0^t \langle P^N\hat{A} (u(s)-u^N(s)),  e_i\rangle_V ds+ \int_0^t \langle \hat{B}(u^N(s),u^N(s))-\hat{B}^*(s),e_i\rangle_{W^*,W}ds\notag\\ &+\int_0^t \langle \hat{F}(u(s))-\hat{F}^N(s),e_i \rangle_V ds + \sum_{k\in K} \int_0^t \langle \hat{G}^k(u(s))-\hat{G}^{k,N}(s),e_i \rangle  dW^k_s \quad \mathbb{P}-a.s.
\end{align}
Thanks to \eqref{preliminary Ito crucial Lemma}, applying the It\^o's formula to $\sigma(t)\lVert P^N u(t)-u^N(t)\rVert_V^2$, where $\sigma(t)=exp(-\eta_1 t-\eta_2\int_0^t \lVert u(s)\rVert_W^2 ds)$, we obtain
\begin{align}\label{energy Lemma}
    &\sigma(t)\lVert P^N u(t)-u^N(t)\rVert_V^2+2\nu \int_0^t \sigma(s)\langle \hat{A}(u(s)-u^N(s)),P^Nu(s)-u^N(s)\rangle_Vds \notag\\ &= \sum_{i=1}^N\sum_{k\in K} \lambda_i\int_0^t \sigma(s) \langle \hat{G}^k(u(s))-\hat{G}^{k,N}(s),e_i\rangle_V^2ds\notag\\ &+2\sum_{k\in K} \int_0^t \sigma(s)\langle \hat{G}^k(u(s))-\hat{G}^{k,N}(s),P^Nu(s)-u^N(s)\rangle_V dW^k_s \notag\\ & -\eta_1\int_0^t \sigma(s)\lVert P^Nu(s)-u^N(s)\rVert_V^2ds-\eta_2\int_0^t \sigma(s)\lVert P^Nu(s)-u^N(s)\rVert_V^2\lVert u(s)\rVert_W^2ds\notag\\ & +2 \int_0^t \sigma(s)\langle \hat{B}(u^N(s),u^N(s))-\hat{B}^*(s),P^Nu(s)-u^N(s)\rangle_{W^*,W} ds \notag\\& +2 \int_0^t \sigma(s)\langle \hat{F}(u(s))-\hat{F}^{N}(s),P^Nu(s)-u^N(s)\rangle_V ds.
\end{align}
Let us analyze the terms in \eqref{energy Lemma} one by one. We will not add details where the computations are analogous to Lemma 3.9 in \cite{razafimandimby2012strong}.
\begin{align*}
\langle \hat{A}(u(s)-u^N(s)),P^Nu(s)-u^N(s)\rangle_V=\langle \hat{A}(u(s)-u^N(s)),u(s)-u^N(s)\rangle_V -\langle \hat{A}(u(s)-u^N(s)),u(s)-P^Nu(s)\rangle_V,             
\end{align*}
\begin{align*}
\frac{1}{C_p^2+\alpha^2}\lVert P^Nu(s)-u^N(s)\rVert_V^2\leq \langle \hat{A}(u(s)-u^N(s)),u(s)-u^N(s) \rangle_V,
\end{align*}
\begin{align*}
&\langle \hat{F}(u(s))-\hat{F}^{N}(s),P^Nu(s)-u^N(s)\rangle_V\\& \stackrel{\text{equation } \eqref{behave F V H}}{\leq} C^F_{\alpha,\{\sigma_k\}_{k\in K}}\lVert u(s)-u^N(s)\rVert_V^2\leq 2C^F_{\alpha,\{\sigma_k\}_{k\in K}}\lVert u^N(s)-P^N u(s)\rVert_V^2+2C^F_{\alpha,\{\sigma_k\}_{k\in K}}\lVert u(s)-P^N u(s)\rVert_V^2,
\end{align*}
\begin{align*}
\sum_{i=1}^N\sum_{k\in K} \lambda_i  \langle \hat{G}^k(u(s))-\hat{G}^{k,N}(s),e_i\rangle_V^2 &=\sum_{k\in K}\lVert P^N (\hat{G}^k(u(s))-\hat{G}^{k,N}(s))\rVert_V^2\\ &\stackrel{\text{equation }\eqref{behave G^k V H}}{\leq} 2C^G_{\alpha,\{\sigma_k\}_{k\in K}}\lVert u^N(s)-P^N u(s)\rVert_V^2+2C^G_{\alpha,\{\sigma_k\}_{k\in K}}\lVert u(s)-P^N u(s)\rVert_V^2,
\end{align*}

\begin{align*}
    2\langle \hat{B}(u^N(s),u^N(s))-\hat{B}^*(s),P^Nu(s)-u^N(s)\rangle_{W^*,W}&\leq C_B^2\lVert  P^Nu(s)-u^N(s)\rVert_V^2\lVert P^N(s) u(s)\rVert_W^2+\lVert  P^Nu(s)-u^N(s)\rVert_V^2\\ & +2 \langle \hat{B}(P^Nu(s),P^Nu(s))-\hat{B}^*(s),P^Nu(s)-u^N(s)\rangle_{W^*,W}. 
\end{align*}
Inserting these relations in equality (\ref{energy Lemma}) we obtain
\begin{align*}
    &\sigma(t)\lVert P^N u(t)-u^N(t)\rVert_V^2+\frac{2\nu}{C_p^2+\alpha^2} \int_0^t \sigma(s)\lVert P^Nu(s)-u^N(s)\rVert_V^2ds \notag\\ &\leq 2\nu\int_0^t \sigma(s)\langle \hat{A}(u(s)-u^N(s)),u(s)-P^Nu(s)\rangle_Vds\\ & +\int_0^tds \sigma(s)(2C^G_{\alpha,\{\sigma_k\}_{k\in K}}\lVert u^N(s)-P^N u(s)\rVert_V^2+2C^G_{\alpha,\{\sigma_k\}_{k\in K}}\lVert u(s)-P^N u(s)\rVert_V^2)\\ &+2\sum_{k\in K} \int_0^t \sigma(s)\langle \hat{G}^k(u(s))-\hat{G}^{k,N}(s),P^Nu(s)-u^N(s)\rangle_V dW^k_s \notag\\ & -\eta_1\int_0^t \sigma(s)\lVert P^Nu(s)-u^N(s)\rVert_V^2ds-\eta_2\int_0^t \sigma(s)\lVert P^Nu(s)-u^N(s)\rVert_V^2\lVert u(s)\rVert_W^2ds\notag\\ & + \int_0^t \sigma(s)\bigg(C_B^2\lVert  P^Nu(s)-u^N(s)\rVert_V^2\lVert P^N u(s)\rVert_W^2+\lVert  P^Nu(s)-u^N(s)\rVert_V^2\\ & +2\langle \hat{B}(P^Nu(s),P^Nu(s))-\hat{B}^*(s),P^Nu(s)-u^N(s)\rangle_{W^*,W}\bigg) ds \notag\\& +\int_0^t \sigma(s)(4C^F_{\alpha,\{\sigma_k\}_{k\in K}}\lVert u^N(s)-P^N u(s)\rVert_V^2+4C^F_{\alpha,\{\sigma_k\}_{k\in K}}\lVert u(s)-P^N u(s)\rVert_V^2) ds.    
\end{align*}
Taking $\eta_1=2C^G_{\alpha,\{\sigma_k\}_{k\in K}}+4C^F_{\alpha,\{\sigma_k\}_{k\in K}}+1$, $\eta_2=C_B^2$ we get
\begin{align}\label{energy lemma 2}
    &\sigma(t)\lVert P^N u(t)-u^N(t)\rVert_V^2+\frac{2\nu}{C_p^2+\alpha^2} \int_0^t \sigma(s)\lVert P^Nu(s)-u^N(s)\rVert_V^2ds \notag\\ &\leq 2\nu\int_0^t \sigma(s)\langle \hat{A}(u(s)-u^N(s)),u(s)-P^Nu(s)\rangle_Vds +C\int_0^t\ \sigma(s)\lVert u(s)-P^N u(s)\rVert_V^2ds\notag\\ &+2\sum_{k\in K} \int_0^t \sigma(s)\langle \hat{G}^k(u(s))-\hat{G}^{k,N}(s),P^Nu(s)-u^N(s)\rangle_V dW^k_s \notag\\ & +2 \int_0^t \sigma(s) \langle \hat{B}(P^Nu(s),P^Nu(s))-\hat{B}^*(s),P^Nu(s)-u^N(s)\rangle_{W^*,W} ds.
\end{align}
Considering the expected value of \eqref{energy lemma 2} for $t=\tau_M\wedge r$, $r\in [0,T]$, the stochastic integral cancel out, thus we arrive at
\begin{align}\label{crucial lemma behavior N inequality}
    &\mathbb{E}\left[\sigma(\tau_M\wedge r)\lVert P^N u(\tau_M\wedge r)-u^N(\tau_M\wedge r)\rVert_V^2\right]+\frac{2\nu}{C_p^2+\alpha^2} \mathbb{E}\left[\int_0^{\tau_M\wedge r} \sigma(s)\lVert P^Nu(s)-u^N(s)\rVert_V^2ds\right] \notag\\ &\leq 2\nu\mathbb{E}\left[\int_0^{\tau_M\wedge r} \sigma(s)\langle \hat{A}(u(s)-u^N(s)),u(s)-P^Nu(s)\rangle_Vds\right] +C\mathbb{E}\left[\int_0^{\tau_M\wedge r}\ \sigma(s)\lVert u(s)-P^N u(s)\rVert_V^2ds\right]\notag\\ & +2\mathbb{E}\left[ \int_0^{\tau_M\wedge r}\sigma(s) \langle \hat{B}(P^Nu(s),P^Nu(s))-\hat{B}^*(s),P^Nu(s)-u^N(s)\rangle_{W^*,W} ds\right] .
\end{align}
We want understand the behavior of the last term in the inequality above. From Lemma \ref{energy in W} and relation \eqref{preliminary observation P^N} we have 
\begin{align}\label{weak convergence crucial lemma}
    P^Nu-u^N=(P^Nu-u)+(u-u^N)\rightharpoonup 0 \text{ in } L^2(\Omega,\mathcal{F},\mathbb{P};L^2(0,T;W)). \end{align}
Instead we have
\begin{align}\label{strong convergence crucial lemma}
    \left\lVert 1_{[0,\tau_M\wedge r]}\sigma\left(\hat{B}(P^Nu,P^Nu)-\hat{B}(u,u)\right)\right\rVert_{L^2(\Omega,\mathcal{F},\mathbb{P};L^2(0,T;W^*))}\rightarrow 0.
\end{align} 
In fact thanks to relation \eqref{preliminary observation P^N} and the boundedness properties of $\hat{B}$ \eqref{inequality bilinear 1},\eqref{inequality bilinear 2}, $\mathbb{P}-a.s.$ for each $t\in[0,T]$ it holds
\begin{align*}
    \lVert (1_{[0,\tau_M\wedge r]}\sigma)\hat{B}(P^Nu,P^Nu)-\hat{B}(u,u)\rVert_{L^2(0,T;W^*)}\leq C\lVert u\rVert_{L^4(0,T;W)}\lVert (P^N-I)u\rVert_{L^4(0,T;W)}\rightarrow 0.
\end{align*}
Moreover
\begin{align*}
    \lVert (1_{[0,\tau_M\wedge r]}\sigma)\hat{B}(P^Nu,P^Nu)-\hat{B}(u,u)\rVert_{L^2(0,T;W^*)}\leq C\lVert u(t)\rVert_W^2\in L^2(\Omega,\mathcal{F},\mathbb{P};L^2(0,T)).
\end{align*}
By dominated convergence Theorem we have the validity of relation \eqref{strong convergence crucial lemma}. Combing the weak convergence guaranteed by relation \eqref{weak convergence crucial lemma} and the strong convergence guaranteed by \eqref{strong convergence crucial lemma} we obtain
\begin{align*}
    2\mathbb{E}\left[ \int_0^{\tau_M\wedge r}\sigma(s) \langle \hat{B}(P^Nu(s),P^Nu(s))-\hat{B}(u(s),u(s)),P^Nu(s)-u^N(s)\rangle_{W^*,W} ds\right]\rightarrow 0.
\end{align*}
From this relation, by triangle inequality, we can analyze easily the last term in \eqref{crucial lemma behavior N inequality}
\begin{align}\label{convergence nonlinear crucial lemma}
    &\mathbb{E}\left[ \int_0^{\tau_M\wedge r}\sigma(s) \langle \hat{B}(P^Nu(s),P^Nu(s))-\hat{B}^*(s),P^Nu(s)-u^N(s)\rangle_{W^*,W} ds\right]\notag\\ &=\mathbb{E}\left[ \int_0^{\tau_M\wedge r}\sigma(s) \langle \hat{B}(P^Nu(s),P^Nu(s))-\hat{B}(u(s),u(s)),P^Nu(s)-u^N(s)\rangle_{W^*,W} ds\right]\notag\\ &+\mathbb{E}\left[ \int_0^{\tau_M\wedge r}\sigma(s) \langle \hat{B}(u(s),u(s))-\hat{B}^*(s),P^Nu(s)-u^N(s)\rangle_{W^*,W} ds\right]\rightarrow 0.
\end{align}
Thanks to the boundedness of $u^N$ and relation \eqref{preliminary observation P^N}
\begin{align}\label{convergence linear crucial lemma}
    \mathbb{E}\left[\int_0^{\tau_M\wedge r} \sigma(s)\langle \hat{A}(u(s)-u^N(s)),u(s)-P^Nu(s)\rangle_Vds\right]&\leq \lVert u(s)-P^N u(s)\rVert_{L^2(\Omega,\mathcal{F},\mathbb{P};L^2(0,T;V))} \lVert \hat{A}(u-u^N)\rVert_{L^2(\Omega,\mathcal{F},\mathbb{P};L^2(0,T;V))}\notag\\ &\leq C \lVert u-P^N u\rVert_{L^2(\Omega,\mathcal{F},\mathbb{P};L^2(0,T;V))} \rightarrow 0.
\end{align}
Combining \eqref{convergence nonlinear crucial lemma} and \eqref{convergence linear crucial lemma} in relation \eqref{crucial lemma behavior N inequality} we obtain
\begin{align}\label{crucial estimate}
    &\mathbb{E}\left[\sigma(\tau_M\wedge r)\lVert P^N u(\tau_M\wedge r)-u^N(\tau_M\wedge r)\rVert_V^2\right]+\frac{2\nu}{C_p^2+\alpha^2} \mathbb{E}\left[\int_0^{\tau_M\wedge r} \sigma(s)\lVert P^Nu(s)-u^N(s)\rVert_V^2ds\right] \rightarrow 0.
\end{align}
From relation (\ref{crucial estimate}), $\sigma(t)\geq C_M>0\ \forall t\leq \tau_M $ and the properties of $P^N$ via triangle inequality the thesis follows considering $r=T$.
\end{proof}
\begin{remark}\label{changes crucial lemma}
    The proof presented above works only in the case $\nu>0.$ In order to treat the case $\nu=0$ we start from relation \eqref{crucial estimate}. Then, triangle inequality allows to prove 
    \begin{align}\label{crucial estimate nu=0 2}
        \mathbb{E}\left[\sigma(\tau_M\wedge r)\lVert  u(\tau_M\wedge r)-u^N(\tau_M\wedge r)\rVert_V^2\right] \rightarrow 0\quad  \forall r\in [0,T].
    \end{align}
    By dominated convergence theorem we can improve the pointwise convergence of relation \eqref{crucial estimate nu=0 2} in order to obtain Lemma \ref{crucial lemma}. We omit the easy details at this stage, since this argument will be described in full details in the proof of Corollary \ref{corollary convergence} below.
\end{remark}

Combing Lemma \ref{crucial lemma} and the moment estimates for $u$ and $u^N$ we get the following Corollary.
\begin{corollary}\label{corollary convergence}
The subsequence $u^N$ satisfies \begin{align}
    \lim_{N\rightarrow +\infty}\mathbb{E}\left[\lVert u^N(t)-u(t)\rVert_V^2\right]=0, \label{convergence galerkin 1}\\ 
    \lim_{N\rightarrow +\infty}\int_0^T\mathbb{E}\left[\lVert u^N(t)-u(t)\rVert_V^2\right]\ dt=0 \label{convergence galerkin 2}.
\end{align}
\end{corollary}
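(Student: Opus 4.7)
The plan is a standard truncation argument: Lemma \ref{crucial lemma} gives convergence on the stopped interval $[0,\tau_M]$, and the uniform moment estimates \eqref{norm V galerkin}, \eqref{norm W Galerkin} together with the limit process estimates in \eqref{convergences} let us discard the complementary event by sending $M\to\infty$. Throughout, the key supplementary fact is that $\tau_M\to T$ almost surely as $M\to\infty$, hence by Markov's inequality and the $L^2$ bound on $\sup_{t\in[0,T]}(\|u(t)\|_V+\|u(t)\|_*)$ we have $\mathbb{P}(\tau_M<T)\to 0$ as $M\to\infty$.

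I would first prove \eqref{convergence galerkin 2}, which is the easier statement. Split
\begin{align*}
\int_0^T\mathbb{E}\left[\lVert u^N(t)-u(t)\rVert_V^2\right]dt
=\mathbb{E}\!\left[\int_0^T\lVert u^N(t)-u(t)\rVert_V^2\,1_{\{t\leq\tau_M\}}\,dt\right]
+\mathbb{E}\!\left[\int_0^T\lVert u^N(t)-u(t)\rVert_V^2\,1_{\{t>\tau_M\}}\,dt\right].
\end{align*}
The first summand tends to zero as $N\to\infty$ by Lemma \ref{crucial lemma}. For the second, Cauchy--Schwarz yields
\begin{align*}
\mathbb{E}\!\left[\int_0^T\lVert u^N(t)-u(t)\rVert_V^2\,1_{\{t>\tau_M\}}\,dt\right]\leq T\,\Bigl(\mathbb{E}\Bigl[\sup_{t\in[0,T]}\lVert u^N(t)-u(t)\rVert_V^4\Bigr]\Bigr)^{1/2}\mathbb{P}(\tau_M<T)^{1/2},
\end{align*}
where the first factor is bounded uniformly in $N$ by Lemma \ref{energy in V} and the weak-$*$ limit information from \eqref{convergences}. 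Sending first $N\to\infty$ and then $M\to\infty$ gives \eqref{convergence galerkin 2}.

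For the pointwise statement \eqref{convergence galerkin 1}, I would go back to inequality \eqref{crucial estimate} in the proof of Lemma \ref{crucial lemma}, which is already pointwise in $r\in[0,T]$. Fixing $t\in[0,T]$ and applying it at $r=t$, together with $\sigma(\tau_M\wedge t)\geq C_M>0$, gives
\begin{align*}
\mathbb{E}\!\left[\lVert P^Nu(\tau_M\wedge t)-u^N(\tau_M\wedge t)\rVert_V^2\right]\xrightarrow[N\to\infty]{}0.
\end{align*}
Combining this with the triangle inequality and the convergence $P^Nu(\tau_M\wedge t)\to u(\tau_M\wedge t)$ in $L^2(\Omega;V)$ (from \eqref{preliminary observation P^N} and dominated convergence, the dominating function being $\sup_{t\in[0,T]}\lVert u(t)\rVert_W^2$ which is integrable by \eqref{convergences}), we obtain $\mathbb{E}[\lVert u^N(\tau_M\wedge t)-u(\tau_M\wedge t)\rVert_V^2]\to 0$, hence $\mathbb{E}[\lVert u^N(t)-u(t)\rVert_V^2\,1_{\{t\leq\tau_M\}}]\to 0$. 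On the complementary event we argue exactly as before via Cauchy--Schwarz, the uniform fourth moment bound and $\mathbb{P}(\tau_M<T)\to 0$.

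The main obstacle is conceptual rather than technical: Lemma \ref{crucial lemma} is stated as convergence in $L^2(\Omega;L^2(0,T;V))$, so a priori it does not deliver pointwise-in-$t$ convergence. The resolution is simply to observe that its proof in fact establishes the stronger pointwise identity \eqref{crucial estimate}, which we reuse here. Once that is recognized, both assertions are obtained by the same two-step scheme: use the local convergence for fixed $M$, then pass $M\to\infty$ using the integrability of $\sup_{t\in[0,T]}\lVert u\rVert_W$.
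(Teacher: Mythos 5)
Your proposal is correct and follows essentially the same route as the paper: both rest on the pointwise-in-$r$ estimate \eqref{crucial estimate} (not merely the statement of Lemma \ref{crucial lemma}), the splitting on $\{\tau_M\geq t\}$ versus its complement, and the removal of the stopping time via Cauchy--Schwarz, the uniform fourth-moment bounds of Lemma \ref{energy in V}/Lemma \ref{energy in W}, and Markov's inequality. The only cosmetic difference is that the paper proves \eqref{convergence galerkin 1} first and deduces \eqref{convergence galerkin 2} by dominated convergence in $t$, whereas you obtain \eqref{convergence galerkin 2} by running the same truncation directly on the time integral; both are valid.
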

\begin{proof}
By relation \eqref{crucial estimate} and triangle inequality we already know that 
\begin{align}
    \lim_{N\rightarrow +\infty}\mathbb{E}\left[\lVert u^N(t\wedge \tau_M)-u(t\wedge \tau_M)\rVert_V^2\right]=0 \label{weaker convergence galerkin 1},\\ 
    \lim_{N\rightarrow +\infty}\mathbb{E}\left[\int_0^{T\wedge\tau_M}\lVert u^N(t)-u(t)\rVert_V^2 dt\right]=0 \label{weaker convergence galerkin 2}.
\end{align}
We start proving convergence \eqref{convergence galerkin 1}. By definition of $\tau_M$, Lemma \ref{energy in W} and the weak-$*$ convergence of $u^N$ to $u$ described by relation \eqref{convergences} and Markov's inequality it follows that
\begin{align*}
    \mathbb{E}\left[\lVert u^N(t)-u(t)\rVert_V^2\right]&=\mathbb{E}\left[\lVert u^N(t)-u(t)\rVert_V^2 1_{\tau_M\geq t}\right]+\mathbb{E}\left[\lVert u^N(t)-u(t)\rVert_V^2 1_{\tau_M<t}\right]\\ & = \mathbb{E}\left[\lVert u^N(t\wedge \tau_M)-u(t\wedge \tau_M)\rVert_V^2 1_{\tau_M\geq t}\right]+\mathbb{E}\left[\lVert u^N(t)-u(t)\rVert_V^2 1_{\tau_M<t}\right]\\ & \leq
    \mathbb{E}\left[\lVert u^N(t\wedge \tau_M)-u(t\wedge \tau_M)\rVert_V^2\right]+\mathbb{E}\left[\lVert u^N(t)-u(t)\rVert_V^4 \right]^{1/2}\mathbb{P}(\tau_M<t)^{1/2}\\ &  \leq
    \mathbb{E}\left[\lVert u^N(t\wedge \tau_M)-u(t\wedge \tau_M)\rVert_V^2\right]+C\operatorname{sup}_{N\in\mathbb{N}}\mathbb{E}\left[\lVert u^N(t)\rVert_W^4\right]^{1/2}\mathbb{P}(\operatorname{sup}_{t\in[0,T]}\lVert u(t)\rVert_W>M)^{1/2}\\ & \leq
    \mathbb{E}\left[\lVert u^N(t\wedge \tau_M)-u(t\wedge \tau_M)\rVert_V^2\right]+\frac{C}{M^2}\operatorname{sup}_{N\in\mathbb{N}}\mathbb{E}\left[\operatorname{sup}_{t\in [0,T]}\lVert u^N(t)\rVert_W^4\right]\\ & \leq \mathbb{E}\left[\lVert u^N(t\wedge \tau_M)-u(t\wedge \tau_M)\rVert_V^2\right]+\frac{C_{\nu, \alpha,\{\sigma_k\}_{k\in K}}}{M^2} ,
\end{align*}
where $C_{\nu, \alpha,\{\sigma_k\}_{k\in K}}$ is a constant independent from $M$ and $N$. If we fix $\epsilon>0$ and choose $M$ large enough such that $\frac{C_{\nu, \alpha,\{\sigma_k\}_{k\in K}}}{M^2}\leq \epsilon$ then by relation \eqref{weaker convergence galerkin 1} we have \begin{align*}
    \operatorname{limsup}_{N\rightarrow +\infty}\mathbb{E}\left[\lVert u^N(t)-u(t)\rVert_V^2\right]\leq \epsilon.
\end{align*}
From the arbitrariness of $\epsilon$, the first thesis follows. In order to obtain the other convergence we apply dominated convergence Theorem. Indeed, by relation \eqref{convergence galerkin 1} we already know that for each $t\in [0,T]$ \begin{align*}
    \mathbb{E}\left[\lVert u^N(t)-u(t)\rVert_V^2\right]\rightarrow 0. 
\end{align*} Moreover, by Lemma \eqref{energy in V}, for each $N$ \begin{align*}
    \mathbb{E}\left[\lVert u^N(t)-u(t)\rVert_V^2\right]&\leq 2 \mathbb{E}\left[\lVert u^N(t)\rVert_V^2\right]+2\mathbb{E}\left[\lVert u(t)\rVert_V^2\right]\\ & \leq C_{p,\alpha,\{\sigma_k\}_{k\in K}}+2\mathbb{E}\left[\lVert u(t)\rVert_V^2\right]\in L^1(0,T).
\end{align*}
Therefore convergence \eqref{convergence galerkin 2} follows.
\end{proof}

From Lemma \ref{crucial lemma}, without any change with respect to the proof of Lemma 3.8 in \cite{razafimandimby2012strong}, we have that the Lemma below holds, thus $u$ is a solution of problem (\ref{Ito}) in the sense of Definition \ref{weak solution}.
\begin{Lemma}
$\hat{B}^*=\hat{B}(u,u) \textit{\ in\ } L^2(\Omega,\mathcal{F},\mathbb{P};L^2(0,T;W^*))$ 
\end{Lemma}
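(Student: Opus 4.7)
The plan is to exploit uniqueness of weak limits: since we already know $\hat{B}(u^N,u^N)\rightharpoonup \hat{B}^*$ in $L^2(\Omega;L^2(0,T;W^*))$, it suffices to establish $\hat{B}(u^N,u^N)\rightharpoonup \hat{B}(u,u)$ in the same space, tested against a dense class of functions $\phi\in L^\infty(\Omega;L^\infty(0,T;W))$. Concretely I would aim at
$$\mathbb{E}\int_0^T \langle \hat{B}(u^N,u^N)-\hat{B}(u,u),\phi\rangle_{W^*,W}\,dt\longrightarrow 0.$$
I split the bilinear difference as $\hat{B}(u^N,u^N)-\hat{B}(u,u)=\hat{B}(u^N,u^N-u)+\hat{B}(u^N-u,u)$. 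For the first summand, the trilinear estimate \eqref{inequality trilinear 1} gives
$$|\langle \hat{B}(u^N,u^N-u),\phi\rangle_{W^*,W}|\leq C\|u^N\|_W\|u^N-u\|_V\|\phi\|_W,$$
which already involves only the $V$-norm of $u^N-u$.

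The second summand is more delicate because a direct bound would require $\|u^N-u\|_W$, which we do not control strongly. Here I would apply the antisymmetry \eqref{antisimmetry hatB} together with integration by parts, exploiting $\operatorname{div}\phi=0$ and $(u^N-u)|_{\partial D}=0$, to rewrite
$$\langle \hat{B}(u^N-u,u),\phi\rangle_{W^*,W}$$
as a sum of trilinear expressions in which all the derivatives fall on $u$ or $\phi$ (both controlled in $W$), so that $u^N-u$ appears only through its $V$-norm (or even $L^2$-norm). This step is the main technical obstacle: it relies on moving the derivatives appearing in $\operatorname{curl}(u-\alpha^2\Delta u)$ off the difference via \eqref{equvalence hatB and b} and successive integrations by parts, using the smoothness of $u\in W\subset H^3$ and of $\phi$. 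The upshot is a bound of the form
$$|\langle \hat{B}(u^N-u,u),\phi\rangle_{W^*,W}|\leq C\|u\|_W\|\phi\|_W\|u^N-u\|_V.$$

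With both pieces estimated by $C_\phi(\|u^N\|_W+\|u\|_W)\|u^N-u\|_V$, I introduce the stopping time $\tau_M$ from Lemma \ref{crucial lemma} and split the time integral as $\int_0^{T\wedge \tau_M}+\int_{T\wedge \tau_M}^T$. On $[0,T\wedge \tau_M]$, Cauchy--Schwarz plus the uniform $W$-bounds of Lemma \ref{energy in W} reduce the problem to $\mathbb{E}\int_0^{T\wedge\tau_M}\|u^N-u\|_V^2\,dt$, which tends to zero by Corollary \ref{corollary convergence}. On the complementary set $\{\tau_M<T\}$, I use Lemma \ref{energy in W} together with Markov's inequality (as in the proof of Corollary \ref{corollary convergence}) to get $\mathbb{P}(\tau_M<T)\leq C_{\nu,\alpha,\{\sigma_k\}}/M^2$, so the contribution of this piece is bounded by $C_\phi\cdot C_{\nu,\alpha,\{\sigma_k\}}/M$ uniformly in $N$. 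A standard two-parameter $\varepsilon$-argument (first $N\to\infty$ at fixed $M$, then $M\to\infty$) closes the proof and forces $\hat{B}^*=\hat{B}(u,u)$ in $L^2(\Omega;L^2(0,T;W^*))$.
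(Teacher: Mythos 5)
Your argument is correct and is essentially the route the paper itself takes, since the paper proves nothing here directly but defers to Lemma 3.8 of \cite{razafimandimby2012strong}, whose proof is exactly this combination of the strong $V$-convergence from Lemma \ref{crucial lemma}/Corollary \ref{corollary convergence}, the uniform $W$-bounds of Lemma \ref{energy in W}, and the structure of $\hat{B}$. The one step you flag as delicate --- the bound $|\langle \hat{B}(u^N-u,u),\phi\rangle_{W^*,W}|\leq C_{\alpha}\lVert u^N-u\rVert_V\lVert u\rVert_W\lVert\phi\rVert_W$ --- does go through as you describe, because after applying \eqref{equvalence hatB and b} the two integrations by parts needed to unload $\Delta(u^N-u)$ produce only boundary integrals containing either $u$, $\phi$, or first derivatives of $u\otimes\phi$ multiplied by $u$ or $\phi$, all of which vanish on $\partial D$ by the no-slip conditions.
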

Now we can prove the uniqueness.
\begin{theorem}
The solution of problem (\ref{Ito}) in the sense of Definition \ref{weak solution} is unique.\end{theorem}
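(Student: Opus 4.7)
The plan is to take two solutions $u_1, u_2$ of \eqref{Ito} in the sense of Definition \ref{weak solution} sharing the initial datum $u_0$ and to show that $w := u_1 - u_2 \equiv 0$ almost surely. Because $F$ and each $G^k$ are \emph{linear} (Corollary \ref{regularity of G^k, F}) and $\hat{B}$ is bilinear, the process $w$ starts from zero and the only genuinely nonlinear term entering its equation is
\[
\hat{B}(u_1, u_1) - \hat{B}(u_2, u_2) = \hat{B}(w, u_1) + \hat{B}(u_2, w).
\]

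I would next derive an It\^o formula for $\|w(t)\|_V^2$ by exactly the same computation that produced \eqref{ito galerkin} in Lemma \ref{energy in V}. Two key cancellations occur for free. First, the martingale term $\sum_k \int_0^{\cdot} \langle G^k(w), w\rangle \, dW^k_s$ vanishes pointwise by \eqref{trilinear G^{k,N}}, since $\langle G^k(w), w\rangle = b(\sigma_k, w, w) = 0$. Second, the combination of the Stratonovich--It\^o drift $2\langle F(w), w\rangle$ with the quadratic variation produces precisely the structure appearing in \eqref{corrector Galerkin}, which for the non-projected equation vanishes identically. Coupled with the antisymmetry \eqref{antisimmetry hatB}, which kills $\langle \hat{B}(u_2, w), w\rangle$, this leaves the deterministic pathwise identity
\[
\|w(t)\|_V^2 + 2\nu \int_0^t \|\nabla w(s)\|_{L^2}^2 \, ds = -2\int_0^t \langle \hat{B}(w, u_1), w\rangle_{W^*, W} \, ds.
\]

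The right-hand side is estimated by using antisymmetry once more together with \eqref{inequality bilinear 2}:
\[
|\langle \hat{B}(w, u_1), w\rangle_{W^*, W}| = |\langle \hat{B}(w, w), u_1\rangle_{W^*, W}| \leq \|\hat{B}(w, w)\|_{W^*} \|u_1\|_W \leq C \|w\|_V^2 \|u_1\|_W.
\]
Dropping the favorable viscous contribution yields the pathwise inequality $\|w(t)\|_V^2 \leq 2C \int_0^t \|u_1(s)\|_W \|w(s)\|_V^2 \, ds$; since $u_1 \in L^{\infty}(0,T;W)$ almost surely, the function $s \mapsto \|u_1(s)\|_W$ is integrable on $[0,T]$, and pathwise Gr\"onwall forces $w \equiv 0$.

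The main technical obstacle is justifying the It\^o formula at the regularity level of weak solutions: the derivation of \eqref{ito galerkin} was carried out on a finite-dimensional Galerkin subspace, and one has to promote it to the limit process $w$, which is not literally the difference of the Galerkin approximations of $u_1$ and $u_2$. I would handle this by applying It\^o's formula to $\|P^N w(t)\|_V^2$, controlling the mismatch between $P^N w$ and genuine Galerkin approximations as an error term, and passing to the limit using the strong convergence established in Corollary \ref{corollary convergence} and the moment bounds of Lemmas \ref{energy in V}--\ref{energy in W}. The case $\nu = 0$ requires no modification, since the viscous term enters only with a favorable sign and is immediately discarded.
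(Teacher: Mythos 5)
Your proposal is correct and follows essentially the same route as the paper: both derive the It\^o formula for $\lVert w\rVert_V^2$ via finite-dimensional projection, observe that the martingale and It\^o-corrector terms cancel by $b(\sigma_k,w,w)=0$ and the structure of \eqref{corrector Galerkin} without the $(I-P^N)$ defect, reduce the nonlinearity to $\langle \hat{B}(w,w),u_i\rangle$ by antisymmetry, bound it by $C\lVert w\rVert_V^2\lVert u_i\rVert_W$, and conclude by Gr\"onwall (the paper uses $u_2$ and an exponential weight $\sigma(t)$ where you use $u_1$ and a direct pathwise Gr\"onwall, which are immaterial variants).
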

\begin{proof}
Let $u_1$ and $u_2$ be two solutions. Let $w$ be their difference, then for each $\phi \in W$ and $t>0$
\begin{align*}
    \langle w(t), \phi\rangle_V&=\nu \int_0^t \langle \nabla w(s), \nabla\phi\rangle_{L^2} ds- \int_0^t b(u_1(s), u_1(s)-\alpha^2 \Delta u_1(s),\phi)ds\\ &-\alpha^2 \int_0^t b(\phi, \Delta u_1(s),u_1(s))ds+\int_0^t b(u_2(s), u_2(s)-\alpha^2 \Delta u_2(s),\phi)ds\\ &+\alpha^2 \int_0^t b(\phi, \Delta u_2(s),u_2(s))ds+\int_0^t \langle F(w(s)),\phi \rangle ds \\ &+ \sum_{k\in K} \int_0^t \langle G^k(w(s)),\phi \rangle  dW^k_s \ \ \mathbb{P}-a.s.
\end{align*}
Now we apply the It\^o's formula to compute $\lVert w\rVert_V^2$. Arguing as in the first part of the proof of Lemma \ref{thm energy inequality} we obtain \begin{align*}
    d\lVert w\rVert_V^2=-2\nu\lVert \nabla w\rVert_{L^2}^2 dt +(b(w,w-\alpha\Delta w, u_2)-b(u_2,w-\alpha\Delta w, w))dt.
\end{align*}
Let us consider $exp(-\int_0^t \lVert u_2(s)\rVert_W^2ds)\lVert w(t)\rVert_V^2:=\sigma(t)\lVert w(t)\rVert_V^2$, via It\^o's formula we get\begin{align*}
    d(\sigma\lVert w\rVert_V^2)+2\nu\sigma\lVert \nabla w\rVert_{L^2}^2 dt=-\sigma\lVert u_2\rVert_W^2\lVert w\rVert_V^2 dt+\sigma(b(w,w-\alpha\Delta w, u_2)-b(u_2,w-\alpha\Delta w, w))dt
\end{align*}
Combining relations \eqref{equvalence hatB and b} and \eqref{inequality trilinear 2} it follows that  \begin{align*}
    \lvert b(w,w-\alpha^2\Delta w,u_2)-b(u_2,w-\alpha^2\Delta w,w)\rvert\leq C\lVert w\rVert_V^2\lVert u_2 \rVert_W. 
\end{align*}
Therefore
\begin{align*}
    d(\sigma\lVert w\rVert_V^2)\leq -\sigma\lVert u_2\rVert_W^2\lVert w\rVert_V^2 dt+C\sigma\lVert w \rVert_V^2\lVert u_2\rVert_Wdt\leq C_{\epsilon}\sigma\lVert w\rVert_V^2,
\end{align*}
where in the last step we applied Young's inequality. From the last chain of inequalities, via Gr\"onwall's Lemma we get the thesis.
\end{proof}
\begin{theorem}\label{theorem convergence galerkin}
The entire Galerkin's sequence $u^N$ satisfies \begin{align*}
    \lim_{N\rightarrow +\infty}\mathbb{E}\left[\lVert u^N(t)-u(t)\rVert_V^2\right]=0,\\ 
    \lim_{N\rightarrow +\infty}\int_0^T\mathbb{E}\left[\lVert u^N(t)-u(t)\rVert_V^2\right]\ dt=0.
\end{align*}
\end{theorem}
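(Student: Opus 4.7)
The plan is to upgrade the subsequential convergence in Corollary \ref{corollary convergence} to convergence of the full sequence by a standard Urysohn-type argument, leveraging the uniqueness statement just established for the limit equation (\ref{Ito}).

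Recall that the a priori estimates \eqref{norm V galerkin} and \eqref{norm W Galerkin} from Lemma \ref{energy in V} and Lemma \ref{energy in W} hold uniformly in $N \in \mathbb{N}$, and are therefore inherited by every subsequence of $(u^N)$. Consequently, the entire convergence scheme of Section \ref{Sec Galerkin approximation}---weak-$*$ compactness in $L^p(\Omega; L^\infty(0,T;W))$, passing to the limit in the weak formulation, identification of $\hat B^{*}$ with $\hat B(\tilde u, \tilde u)$ via Lemma \ref{crucial lemma}---applies verbatim to any subsequence of $(u^N)$ and produces a weak solution $\tilde u$ of equation (\ref{Ito}) in the sense of Definition \ref{weak solution} as a limit of a further sub-subsequence.

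Now suppose, for contradiction, that one of the two claimed convergences fails for the entire sequence; say, there exist $t_0 \in [0,T]$, $\varepsilon > 0$ and a subsequence $(u^{N_k})$ with
\begin{align*}
\mathbb{E}\left[\lVert u^{N_k}(t_0) - u(t_0)\rVert_V^2\right] \geq \varepsilon \quad \text{for every } k.
\end{align*}
By the discussion above, there is a further subsequence $(u^{N_{k_j}})$ and a weak solution $\tilde u$ of (\ref{Ito}) such that Corollary \ref{corollary convergence} applies to $(u^{N_{k_j}})$ and $\tilde u$ in place of $(u^N)$ and $u$; in particular $\mathbb{E}[\lVert u^{N_{k_j}}(t_0) - \tilde u(t_0)\rVert_V^2] \to 0$. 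By the uniqueness theorem (the preceding result), $\tilde u = u$ almost surely, which contradicts the choice of $(u^{N_k})$. The argument for the integral version $\int_0^T \mathbb{E}[\lVert u^N(t)-u(t)\rVert_V^2]\,dt \to 0$ is identical. The only real ingredient is the uniqueness of the limit; the main (mild) obstacle is simply verifying that every step in Section \ref{Sec Galerkin approximation} depended only on the uniform bounds \eqref{norm V galerkin}, \eqref{norm W Galerkin} and not on the particular choice of subsequence, which is clear by inspection.
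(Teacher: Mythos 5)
Your proof is correct and follows essentially the same route as the paper: the paper's own argument is precisely the subsequence-of-a-subsequence principle combined with the uniqueness of the weak solution of (\ref{Ito}) and Corollary \ref{corollary convergence}, which you have simply written out in more detail via the contradiction formulation. No gaps.
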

\begin{proof}
Each subsequence $u^{N_k}$ has a converging sub-subsequence $u^{N_{k,k}}$ which satisfies all previous Lemmas. By uniqueness of the solution of equation \eqref{Ito} and Corollary \ref{corollary convergence} then the thesis follows.
\end{proof}
\begin{remark}
Theorem \ref{theorem convergence galerkin} plays no role concerning the well-posedness of equation (\ref{Ito}), but it will be crucial for obtaining the energy estimates of Section \ref{energy estimates}, and thus for proving Theorem \ref{Main thm}.
\end{remark}
\section{Energy Estimates}\label{energy estimates}
Now we start considering equations (\ref{Ito Scaled}) and assuming also Hypothesis \ref{Hypothesis inviscid limit}. The goal of this Section is to prove the following lemma:

\begin{Lemma}\label{thm energy inequality}
Under Hypothesis \ref{hypothesis well-posedness}-\ref{Hypothesis inviscid limit}, if $u^{\alpha}$ is the solution of problem (\ref{Ito Scaled}) in the sense of Definition \ref{weak solution}, then \begin{align}
    &\lVert u^{\alpha}(t)\rVert^2+\alpha^2\lVert \nabla u^{\alpha}(t)\rVert_{L^2}^2+\nu\int_0^t \lVert \nabla u^{\alpha}(s)\rVert_{L^2}^2ds=\lVert u^{\alpha}_0\rVert^2+\alpha^2\lVert \nabla u^{\alpha}_0\rVert_{L^2}^2\label{Ito solution};\\
    &\mathbb{E}\left[\alpha^6\operatorname{sup}_{t\in [0,T]}\lVert u^{\alpha}(t)\rVert_{H^3}^2\right]=O(1). \label{H3 scaling solution}
\end{align}
\end{Lemma}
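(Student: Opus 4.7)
The proof naturally splits into two parts, mimicking the deterministic strategy of \cite{lopes2015approximation} but with noise contributions tracked explicitly.

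For the identity \eqref{Ito solution}, the plan is to apply It\^o's formula to $\lVert u^{\alpha,N}\rVert_V^2$ at the Galerkin level and then pass to the limit. The argument is formally the one in Lemma \ref{energy in V}, only with the scalings $\sqrt{\tilde\nu}$ and $\tilde\nu/2$ in the martingale and corrector terms. As in the derivation of \eqref{corrector Galerkin} the $F$-drift exactly cancels the quadratic variation modulo the projector defect $(I-P^N)$, leaving
\begin{align*}
d\lVert u^{\alpha,N}\rVert_V^2 = -2\nu\lVert \nabla u^{\alpha,N}\rVert_{L^2}^2\,dt - \tilde\nu\sum_{k\in K} b\bigl(\sigma_k,\,u^{\alpha,N},\,(I-P^N)(I-\alpha^2A)^{-1}P(\sigma_k\cdot\nabla u^{\alpha,N})\bigr)dt,
\end{align*}
the stochastic integral dropping out because $\langle G^k(u^{\alpha,N}),u^{\alpha,N}\rangle = b(\sigma_k,u^{\alpha,N},u^{\alpha,N}) = 0$. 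By \eqref{behave tilde G^k}, the family $(I-\alpha^2A)^{-1}P(\sigma_k\cdot\nabla u^{\alpha,N})$ is bounded in $W$, so $(I-P^N)$ applied to it converges strongly to $0$; combining this with the mean-square convergence of Theorem \ref{theorem convergence galerkin} one passes to the limit and obtains the pathwise equality $\lVert u^\alpha(t)\rVert_V^2 + 2\nu\int_0^t \lVert\nabla u^\alpha\rVert_{L^2}^2\,ds = \lVert u_0^\alpha\rVert_V^2$, which is \eqref{Ito solution}.

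For \eqref{H3 scaling solution}, I would first invoke \eqref{scaling H^3} (restricting to $\alpha\leq 1$) to bound
\begin{align*}
\alpha^6\lVert u^\alpha\rVert_{H^3}^2 \;\leq\; C\alpha^2(\alpha^2+1)^2\lVert\nabla u^\alpha\rVert_{L^2}^2 + C\alpha^2\lVert u^\alpha\rVert_*^2 \;\lesssim\; \alpha^2\lVert\nabla u^\alpha\rVert_{L^2}^2 + \alpha^2\lVert u^\alpha\rVert_*^2.
\end{align*}
The first term is controlled uniformly in $\alpha$ by \eqref{Ito solution} together with \eqref{L^2 conv initial}--\eqref{H^1 behav initial}, since $\alpha^2\lVert\nabla u^\alpha\rVert_{L^2}^2\leq \lVert u^\alpha\rVert_V^2 \leq \lVert u_0^\alpha\rVert_V^2$ and $\mathbb{E}[\lVert u_0^\alpha\rVert_V^2] = O(1)$. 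Hence the whole game is to prove $\mathbb{E}[\alpha^2\sup_{t\in[0,T]}\lVert u^\alpha(t)\rVert_*^2] = O(1)$. The plan is to repeat the It\^o-formula computation of Lemma \ref{energy in W} for the scaled equation \eqref{Ito Scaled}, treating $\sqrt{\tilde\nu}G^k$ and $\tilde\nu F$ in place of $G^k,F$, and to track the powers of $\alpha$ through the analogues of the estimates \eqref{estimate 1}--\eqref{estimate 4}: with $\nu,\tilde\nu = O(\alpha^2)$, the BDG term scales like $\tfrac{\tilde\nu}{\alpha^2}\sum_k\lVert\sigma_k\rVert_{W^{1,\infty}}^2 = O(1)$, the $F$-contribution scales like $\tfrac{\alpha^2\tilde\nu^2}{\nu\alpha^4}\bigl(\sum_k\lVert\sigma_k\rVert_{L^\infty}\lVert\sigma_k\rVert_{W^{1,\infty}}\bigr)^2 = O(1)$, while the dissipative coefficient $(\nu - \epsilon\tilde\nu\sum_k\lVert\sigma_k\rVert^2)/\alpha^2$ remains strictly positive for small $\epsilon$. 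After combining and applying Gr\"onwall's lemma the counterpart of \eqref{final norm W 2} becomes $\mathbb{E}[\sup_t\lVert u^{\alpha,N}\rVert_*^2] \leq C\mathbb{E}[\lVert u_0^{\alpha,N}\rVert_*^2] + C\,\mathbb{E}\bigl[\int_0^T\lVert\nabla u^{\alpha,N}\rVert_{L^2}^2\,ds\bigr]$ with $C$ independent of $\alpha$ and $N$.

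To finish, I would absorb the $\alpha^2$ factor into both sides. The initial term is controlled via
\begin{align*}
\alpha^2\lVert u_0^\alpha\rVert_*^2 \;\leq\; 2\alpha^2\lVert\nabla u_0^\alpha\rVert_{L^2}^2 + 2\alpha^6\lVert u_0^\alpha\rVert_{H^3}^2 \;=\; o(1) + O(1)
\end{align*}
in expectation, using \eqref{H^1 behav initial}--\eqref{H^3 behav initial}; the integrated $H^1$ term gives $\alpha^2\mathbb{E}[\int_0^T\lVert\nabla u^\alpha\rVert^2\,ds]\leq T\,\mathbb{E}[\lVert u_0^\alpha\rVert_V^2] = O(1)$ from \eqref{Ito solution}. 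Lower semicontinuity and Theorem \ref{theorem convergence galerkin} transfer the Galerkin bound to $u^\alpha$, producing $\mathbb{E}[\alpha^2\sup_t\lVert u^\alpha\rVert_*^2] = O(1)$, and combined with the first term this delivers \eqref{H3 scaling solution}. The main obstacle is unquestionably the second step: every constant $C_{\alpha,\nu,\{\sigma_k\}}$ in Lemma \ref{energy in W} hides a non-trivial $\alpha$-dependence, and it is only through systematic bookkeeping in the $\tilde\nu$-rescaled It\^o formula, together with the scaling $\nu,\tilde\nu = O(\alpha^2)$ of Hypothesis \ref{Hypothesis inviscid limit}, that these constants conspire to give the $\alpha^{-2}$ blow-up rate for $\lVert u^\alpha\rVert_*^2$ rather than something worse.
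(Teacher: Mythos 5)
Your proof of \eqref{H3 scaling solution} follows the paper's argument essentially verbatim: reduce via \eqref{scaling H^3} to a bound on $\mathbb{E}\left[\alpha^2\sup_{t}\lVert u^{\alpha}\rVert_*^2\right]$, rerun Lemma \ref{energy in W} for the scaled system while tracking the powers of $\alpha$ through the analogues of \eqref{estimate 1}--\eqref{estimate 4} (this is exactly the paper's passage from \eqref{final norm W 2} to \eqref{scaling norm W 2}), control the data terms by \eqref{H^1 behav initial}--\eqref{H^3 behav initial} and the integrated $H^1$ term by \eqref{Ito solution}, and transfer the resulting Galerkin bound to $u^{\alpha}$ by weak-$*$ lower semicontinuity. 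For the identity \eqref{Ito solution} you take a genuinely different, though equivalent, route: the paper does not go through the Galerkin system at all, but projects the solution $u^{\alpha}$ itself onto the first $n$ Stokes eigenfunctions $\tilde e_i$, applies It\^o's formula to $\tfrac12\langle (I-\alpha^2A)\tilde u^n,\tilde u^n\rangle$, and shows that the quadratic-variation term and the $F$-drift cancel exactly as $n\to\infty$; you instead recycle the It\^o formula \eqref{ito galerkin}--\eqref{corrector Galerkin} for the Galerkin approximations in the $W$-orthonormal basis and pass to the limit via Theorem \ref{theorem convergence galerkin}. This works and is arguably more economical, but it obliges you to justify two limit passages: (i) convergence of $\lVert u^{\alpha,N}(t)\rVert_V^2$ and of $\int_0^t\lVert\nabla u^{\alpha,N}\rVert_{L^2}^2\,ds$ for \emph{every} $t$ pathwise (Theorem \ref{theorem convergence galerkin} gives mean-square convergence at fixed $t$, so you need a diagonal subsequence plus path continuity in $V$), and (ii) the vanishing of the projector defect. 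On (ii), your phrase ``bounded in $W$, so $(I-P^N)$ applied to it converges strongly to $0$'' is false in the $W$-norm (take $w_N=e_{N+1}$); it is true in $V$, hence in $L^2$, since $\lVert (I-P^N)w\rVert_V^2=\sum_{i>N}\langle w,e_i\rangle_W^2/\lambda_i\le \lambda_{N+1}^{-1}\lVert w\rVert_W^2$ and $\lambda_i\to\infty$ by compactness of $W\hookrightarrow V$. That suffices here because the defect sits in the third slot of $b(\sigma_k,u^{\alpha,N},\cdot)$, which only needs $L^2$ control of that argument, but the norm should be stated. With these two points made precise your argument is complete and yields the same conclusion as the paper's (up to the factor $2\nu$ versus $\nu$ in front of the dissipation integral, which is immaterial for the sequel).
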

\begin{proof}
For the sake of simplicity we write $u$ and $u_0$ instead of $u^{\alpha},\ u^{\alpha}_0$ since $\alpha$ is fixed in this proof. Therefore all the asymptotic expansions and limits will be considering $N\rightarrow +\infty.$
\begin{itemize}
    \item Let $\Tilde{e}_i$ be the eigenfunctions of the Stokes operator $-A$, and $\Tilde{\lambda}_i$ the corresponding eigenvalues introduced in Lemma \ref{eigenfunction Stokes}. Let, moreover, $\Tilde{u}^N=\sum_{i=1}^N\langle u,\Tilde{e}_i\rangle \Tilde{e}_i=\Tilde{P}^N u$. Exploiting the weak formulation with test functions $\Tilde{e}_i$ we get
\begin{align*}
    \langle u(t), \Tilde{e}_i\rangle-\alpha^2\langle u(t), A\Tilde{e}_i\rangle-  \langle u_0, \Tilde{e}_i\rangle+\alpha^2 \langle u_0, A\Tilde{e}_i\rangle&=\nu \int_0^t \langle u(s), A\Tilde{e}_i\rangle ds- \int_0^t b(u(s), u(s)-\alpha^2 \Delta u(s),\Tilde{e}_i)ds\\ &-\alpha^2 \int_0^t b(\Tilde{e}_i, \Delta u(s),u(s))ds+\Tilde{\nu}\int_0^t \langle F(u),\Tilde{e}_i \rangle ds \\ &+ \sqrt{\Tilde{\nu}}\sum_{k\in K} \int_0^t \langle G^k(u(s)),\Tilde{e}_i \rangle  dW^k_s \ \ \mathbb{P}-a.s.
\end{align*}
Multiplying each equation by $\Tilde{e}_i$ and summing up, we get
\begin{align*}
    d(\Tilde{u}^N-\alpha^2A\Tilde{u}^N)&=\nu A\Tilde{u}^N dt-\sum_{i=1}^N b(u, u-\alpha^2 \Delta u,\Tilde{e}_i)\ dt\\ &-\alpha^2 \sum_{i=1}^N\int_0^t b(\Tilde{e}_i, \Delta u,u) \Tilde{e}_i\ dt+ \Tilde{\nu}\sum_{i=1}^N\langle F(u),\Tilde{e}_i \rangle \Tilde{e}_i \ dt \\ &+ \sqrt{\Tilde{\nu}}\sum_{k\in K} \sum_{i=1}^N \langle G^k(u),\Tilde{e}_i \rangle \Tilde{e}_i \ dW^k_t
\end{align*}
Now we can apply the It\^o's formula to the process \begin{align*}
    \frac{1}{2} (\lVert \Tilde{u}^N(t)\rVert^2+\alpha^2 \lVert \nabla \Tilde{u}^N(t)\rVert_{L^2}^2)=\frac{1}{2} \langle (I-\alpha^2 A)\Tilde{u}^N(t),\Tilde{u}^N(t)\rangle
\end{align*}
obtaining \begin{align*}
   \frac{\lVert \Tilde{u}^N(t)\rVert^2+\alpha^2 \lVert  \nabla \Tilde{u}^N(t)\rVert}{2}&=\frac{\lVert \Tilde{u}^N_0\rVert^2+\alpha^2 \lVert  \nabla \Tilde{u}^N_0\rVert}{2}  -\nu\int_0^t \langle \nabla \Tilde{u}^N(s), \nabla u^N(s)\rangle_{L^2} ds-\int_0^t b(u(s),u(s)\\ &-\alpha^2\Delta u(s),\Tilde{u}^N(s))-\alpha^2\int_0^t b(\Tilde{u}^N(s),\Delta u(s), u(s))ds\\ & +\frac{\Tilde{\nu}}{2}\int_0^t \sum_{k\in K} \langle P(\sigma_k\cdot \nabla((I-\alpha^2 A)^{-1}P(\sigma_k\cdot\nabla u(s)))),\Tilde{u}^N(s)\rangle ds\\ &+ \sqrt{\Tilde{\nu}}\sum_{k\in K}\int_0^t \langle P(\sigma_k\cdot \nabla u(s)),\Tilde{u}^N\rangle dW^K_s\\ &+\frac{\Tilde{\nu}}{2}\sum_{k\in K}\int_0^t \sum_{i=1}^N\langle P(\sigma_k\cdot\nabla u(s)),\tilde{e}_i\rangle^2\langle \tilde{e}_i, (I-\alpha^2A)^{-1}\tilde{e}_i\rangle ds. 
\end{align*}
Thanks to the properties of the projector $\Tilde{P}^N$ we get easily the first relation. The only thing we need to prove is that \begin{align*}
    \sum_{i=1}^N\langle P(\sigma_k\cdot\nabla u),\tilde{e}_i\rangle^2\langle \tilde{e}_i, (I-\alpha^2A)^{-1}\tilde{e}_i\rangle+\langle P(\sigma_k\cdot \nabla((I-\alpha^2 A)^{-1}P(\sigma_k\cdot\nabla u))),\Tilde{u}^N\rangle\rightarrow 0.
\end{align*} The last relation is true, in fact 
\begin{align*}
    &\sum_{i=1}^N\langle P(\sigma_k\cdot\nabla u),\tilde{e}_i\rangle^2\langle \tilde{e}_i, (I-\alpha^2A)^{-1}\tilde{e}_i\rangle+\langle P(\sigma_k\cdot \nabla((I-\alpha^2 A)^{-1}P(\sigma_k\cdot\nabla u))),\Tilde{u}^N\rangle=\\& \sum_{i=1}^N\langle P(\sigma_k\cdot\nabla u),(I-\alpha^2 A)^{-1/2}\tilde{e}_i\rangle^2+\langle P(\sigma_k\cdot \nabla((I-\alpha^2 A)^{-1}P(\sigma_k\cdot\nabla u))),\Tilde{u}^N\rangle\\ & \rightarrow \langle (I-\alpha^2 A)^{-1}P(\sigma_k\cdot \nabla u),P(\sigma_k\cdot \nabla u)\rangle+\langle P(\sigma_k\cdot \nabla((I-\alpha^2 A)^{-1}P(\sigma_k\cdot\nabla u))),u\rangle=0.
\end{align*}
\item From Theorem \ref{theorem convergence galerkin} and equation \eqref{equivalence H1-V}, we know that \begin{align*}
    \int_0^T \mathbb{E}\left[\lVert \nabla u^N(s)\rVert_{L^2}^2\right] ds& \leq \frac{1}{\alpha^2}\int_0^T \mathbb{E}\left[\lVert u^N(s)\rVert_V^2\right]ds =\frac{1}{\alpha^2}\int_0^T\mathbb{E}\left[\lVert u(s)\rVert_V^2\right] ds+o(1).
\end{align*}
Thus, from the It\^o formula \eqref{Ito solution} the following relations hold true:
\begin{align}\label{importanti ito}
   & \int_0^T \mathbb{E}\left[\lVert \nabla u^N(s)\rVert_{L^2}^2 \right] ds\leq \frac{C}{\alpha^2}\mathbb{E}\left[\lVert  u_0\rVert^2\right]+C\mathbb{E}\left[\lVert \nabla u_0\rVert_{L^2}^2\right]+o(1)\\\label{importanti ito2}
    & \mathbb{E}\left[\operatorname{sup}_{t\in [0,T]}\lVert \nabla u(t)\rVert_{L^2}^2 \right]\leq \frac{1}{\alpha^2}\mathbb{E}\left[\lVert  u_0\rVert^2\right]+\mathbb{E}\left[\lVert \nabla u_0\rVert_{L^2}^2\right]
\end{align}
According to inequality (\ref{scaling H^3}), in order to prove relation \eqref{H3 scaling solution}, it remains to study \begin{align*}\
    \mathbb{E}\left[\operatorname{sup}_{t\in [0,T]}\lVert  u^N(t)\rVert_*^2 \right].
\end{align*}
Before going on we recall some notation. For each $N\in\mathbb{N}$
\begin{align*}
    \tau_M^N=\inf\{t:\ \lVert u^N(t)\rVert_V+\lVert u^N(t)\rVert_*\geq M \}\wedge T,
\end{align*}
Thanks to the scaling factor $\sqrt{\Tilde{\nu}}$ appearing in front of the noise and exploiting the asymptotic relation between $\nu,\ \Tilde{\nu}$ and $\alpha^2$ described by Hypothesis \ref{Hypothesis inviscid limit} , if we choose \begin{align*}
    \epsilon=\frac{1}{2\sum_{k\in K}\lVert \sigma_k\rVert_{W^{1,\infty}}^2},
\end{align*} equation \eqref{final norm W 2} in Lemma \ref{energy in W} becomes
\begin{align}\label{scaling norm W 2}
    &\mathbb{E}\left[\operatorname{sup}_{t\leq r\wedge \tau_M^N}\lVert u^N(t)\rVert_*^2\right]+\mathbb{E}\left[\int_0^{r\wedge \tau_M^N} \lVert u^N(s)\rVert_*^2ds\right] \leq C\left(\mathbb{E}\left[\lVert u^N_0\rVert_*^2\right]+\left(\alpha^2+1\right) \mathbb{E}\left[\int_0^{r\wedge \tau_M^N} \lVert \nabla u^{N}(s)\rVert_{L^2}^2\right]\right).
\end{align}
Therefore, thanks to equation \eqref{importanti ito}, we have 
\begin{align}\label{scaled norm W 2}
     &\mathbb{E}\left[\operatorname{sup}_{t\leq r\wedge \tau_M^N}\lVert u^N(t)\rVert_*^2\right]+\mathbb{E}\left[\int_0^{r\wedge \tau_M^N} \lVert u^N(s)\rVert_*^2ds\right]\notag\\ & \leq C\left(\mathbb{E}\left[\lVert u^N_0\rVert_*^2\right]+\left(\alpha^2+1\right)\left( \mathbb{E}\left[\frac{\lVert  u_0\rVert^2}{\alpha^2}\right]+\mathbb{E}\left[\lVert \nabla u_0\rVert_{L^2}^2\right]\right)\right)+o(1).
\end{align}

So far we showed that $u^N\in L^2(\Omega;L^2([0,T];H^1))$, $\operatorname{curl}(u^N-\alpha^2\Delta u^N)\in L^2(\Omega, L^{\infty}([0,T];L^2))$. By monotone convergence Theorem, we can remove the dependence from $M$ in relation \eqref{scaled norm W 2}. Therefore
\begin{align}
     &\mathbb{E}\left[\operatorname{sup}_{t\leq T}\lVert u^N(t)\rVert_*^2\right] \leq C\left(\mathbb{E}\left[\lVert u^N_0\rVert_*^2\right]+\left(\alpha^2+1\right)\left( \mathbb{E}\left[\frac{\lVert  u_0\rVert^2}{\alpha^2}\right]+\mathbb{E}\left[\lVert \nabla u_0\rVert_{L^2}^2\right]\right)\right)+o(1)\label{final scaling estimate 2}.
\end{align}
Thus, by Theorem \ref{theorem convergence galerkin} and the uniform bound \eqref{final scaling estimate 2} there exists a subsequence $N_k$ such that \begin{align*}
    & u^{N_k}\rightarrow u\text{ in } L^2(\Omega;L^2([0,T];H^1))\\
    & \operatorname{curl}(u^{N_k}-\alpha^2\Delta u^{N_k})\stackrel{*}{\rightharpoonup} g \text{ in } L^2(\Omega, L^{\infty}([0,T];L^2)).
\end{align*} If we take a test function $\phi\in L^2(\Omega;L^2(0,T;C^{\infty}_c(D)))$, we get easily \begin{align*}
    \mathbb{E}\left[\int_0^T \langle \phi(s),g(s)\rangle_{L^2} ds \right]&=\lim_{k\rightarrow+\infty}\mathbb{E}\left[\int_0^T \langle \phi(s),\operatorname{curl}(u^{N_k}(s)-\alpha^2\Delta u^{N_k}(s))\rangle_{L^2} ds\right]\\ & = \lim_{k\rightarrow+\infty}\mathbb{E}\left[\int_0^T \langle (I-\alpha^2\Delta)\nabla^{\perp}\phi(s),u^{N_k})(s)\rangle_{L^2} ds \right]\\ & =\mathbb{E}\left[\int_0^T \langle (I-\alpha^2\Delta)\nabla^{\perp}\phi(s),u(s))\rangle_{L^2} ds\right].
\end{align*}
Therefore $g=\operatorname{curl}(u-\alpha^2\Delta u)\in L^2(\Omega, L^{\infty}([0,T];L^2)) $ and the following inequality holds true \begin{align}\label{prefinal norm *}
    \mathbb{E}\left[\operatorname{sup}_{t\leq T}\lVert u(t)\rVert_{*}^2\right]\leq
    C\left(\liminf_{k\rightarrow +\infty}\mathbb{E}\left[\lVert u^{N_k}_0\rVert_*^2\right]+\left(\alpha^2+1\right)\left( \mathbb{E}\left[\frac{\lVert  u_0\rVert^2}{\alpha^2}\right]+\mathbb{E}\left[\lVert \nabla u_0\rVert_{L^2}^2\right]\right)\right).
\end{align}
Let us analyze better the first term. We denote by $u_0^{N,\infty}=u_0-u_0^N$. 
\begin{align*}
    \lVert u^{N_k}_0\rVert_*^2&=\lVert u^{N_k}_0\rVert_W^2-\lVert u^{N_k}_0\rVert_V^2\\ &\leq \lVert u_0\rVert_W^2-\lVert u^{N_k}_0\rVert_V^2\\ & \leq \lVert u_0\rVert_*^2+\lVert u^{{N_k},\infty}_0\rVert_V^2\\ & \leq \lVert u_0\rVert_*^2+\lVert u_0\rVert_V^2\\ &\leq C( \lVert \nabla u_0\rVert_{L^2}^2+\alpha^4\lVert \operatorname{curl}\Delta u_0\rVert_{L^2}^2+\lVert u_0\rVert^2+\alpha^2\lVert \nabla u_0\rVert_{L^2}^2)\\ & \leq C(
    \lVert u_0\rVert^2+(1+\alpha^2)\lVert \nabla u_0\rVert_{L^2}^2+\alpha^4\lVert u_0\rVert_{H^3}^2)
\end{align*}
In conclusion, combining the observation above,  relations \eqref{scaling H^3}, (\ref{importanti ito2}) and \eqref{prefinal norm *} we get

\begin{align}\label{explicit estimate H^3}
    \mathbb{E}\left[\operatorname{sup}_{t\leq T}\lVert u(t)\rVert_{H^3}^2\right]& \leq C\left( \frac{\alpha^4+\alpha^2+1}{\alpha^6}\mathbb{E}\left[\lVert  u_0\rVert^2\right]+\frac{1+\alpha^2+\alpha^4}{\alpha^4}\mathbb{E}\left[\lVert \nabla u_0\rVert_{L^2}^2\right]+ \mathbb{E}\left[\lVert u_0\rVert_{H^3}^2\right]\right) .\end{align}
\end{itemize}
Thanks to the assumptions on $u_0^{\alpha}$, see Hypothesis \ref{Hypothesis inviscid limit}, the thesis follows.
\end{proof}
\begin{remark}\label{remark thm energy inequality}
In the case $\nu=0$, relation \eqref{Ito solution} follows without any change with respect to the main proof. For what concerns relation \eqref{H3 scaling solution}, equation \eqref{scaling norm W 2} above is false in this framework. However, introducing the proper scaling in front of the noise we can restart from relation \eqref{ito formula norm * galerkin step 3 remark} obtaining \begin{align}
    \mathbb{E}\left[\operatorname{sup}_{t\leq r\wedge \tau_M^N}\lVert u^N(t)\rVert_*^2\right]& \leq 2\mathbb{E}\left[\lVert u^N_0\rVert_*^2\right]+\mathbb{E}\left[\int_0^{r\wedge \tau_M^N} \lVert u^N(s)\rVert_*^2 ds\right]\notag\\ & +4\sqrt{\Tilde{\nu}}\mathbb{E}\left[\operatorname{sup}_{t\leq r\wedge \tau_M^N}\left\lvert \sum_{k\in K} \int_0^{t}\langle \operatorname{curl}({G}^{k,N}(s) ),\operatorname{curl}(u^N(s)-\alpha^2\Delta u^N(s) )\rangle_{L^2} dW^k_s\right\rvert\right]\notag\\ &+ 4\Tilde{\nu}\mathbb{E}\left[\int_0^{r\wedge \tau_M^N}\lVert \operatorname{curl} F^N(s)\rVert_{L^2}^2 ds \right]+2\Tilde{\nu}\sum_{k\in K}\sum_{i=1}^N(\lambda_i+\lambda_i^2)\mathbb{E}\left[\int_0^{r\wedge \tau_M^N}\langle {G}^{k,N}(s),e_i \rangle^2ds\right].
\label{remark scaling norm W 2}\end{align}
Therefore, combining estimates \eqref{estimate 1},\eqref{estimate 2},\eqref{estimate 3},\eqref{estimate 4}, exploiting the asymptotic relation between $\Tilde{\nu}$ and $\alpha^2$ described by Hypothesis \ref{Hypothesis inviscid limit} and choosing $\epsilon=\frac{1}{\sum_{k\in K}\lVert \sigma_k\rVert_{W^{1,\infty}}^2}$, we obtain
\begin{align}
    \mathbb{E}\left[\operatorname{sup}_{t\leq r\wedge \tau_M^N}\lVert u^N(t)\rVert_*^2\right]& \leq 4\mathbb{E}\left[\lVert u^N_0\rVert_*^2\right]+6\mathbb{E}\left[\int_0^{r\wedge \tau_M^N} \lVert u^N(s)\rVert_*^2 ds\right] +C_{\{\sigma_k\}_{k\in K}}\left(\alpha^2+1\right) \mathbb{E}\left[\int_0^{r\wedge \tau_M^N} \lVert \nabla u^{N}(s)\rVert_{L^2}^2 ds\right].
\label{scaling norm W 2 remark}\end{align}
Therefore, thanks to equation \eqref{importanti ito}, we have 
\begin{align}
    \mathbb{E}\left[\operatorname{sup}_{t\leq r\wedge \tau_M^N}\lVert u^N(t)\rVert_*^2\right]& \leq 4\mathbb{E}\left[\lVert u^N_0\rVert_*^2\right]+6\mathbb{E}\left[\int_0^{r\wedge \tau_M^N} \lVert u^N(s)\rVert_*^2 ds\right]\notag\\ &  +C_{\{\sigma_k\}_{k\in K}}\left(\alpha^2+1\right) \left( \mathbb{E}\left[\frac{\lVert  u_0\rVert^2}{\alpha^2}\right]+\mathbb{E}\left[\lVert \nabla u_0\rVert_{L^2}^2\right]\right)+o(1).
\label{scaled norm W 2 remark}\end{align}
Arguing as in Remark \ref{changes Lemma energy W}, we can apply Gr\"onwall's Lemma in inequality \eqref{scaled norm W 2 remark} obtaining 
\begin{align}\label{scaled norm W 2 final remark}
     &\mathbb{E}\left[\operatorname{sup}_{t\leq r\wedge \tau_M^N}\lVert u^N(t)\rVert_*^2\right]  \leq C_{\{\sigma_k\}_{k\in K}}\left(\mathbb{E}\left[\lVert u^N_0\rVert_*^2\right]+\left(\alpha^2+1\right)\left( \mathbb{E}\left[\frac{\lVert  u_0\rVert^2}{\alpha^2}\right]+\mathbb{E}\left[\lVert \nabla u_0\rVert_{L^2}^2\right]\right)\right)+o(1).
\end{align}
Relation \eqref{scaled norm W 2 final remark} is completely analogous to relation \eqref{scaled norm W 2} above. Therefore we can follow the same argument of the main proof in order to obtain estimate \eqref{H3 scaling solution} and we omit the details.
\end{remark}

\section{Proof of Theorem \ref{Main thm}}\label{proof of main thm}
In order to prove Theorem \ref{Main thm}, we will follow the ideas of \cite{lopes2015approximation} and \cite{luongo2021inviscid}. We will start with a weaker result with the supremum in time outside the expected value and then we will move to the stronger one with the supremum in time inside the expected value. 

\begin{proof}[Proof of Theorem \ref{Main thm}]
Let $W^{\alpha}=u^{\alpha}-\Bar{u}$, it satisfies $\mathbb{P}-a.s.$ for each $\phi \in H$ and $t\in[0,T]$

\begin{align*}
    \langle W^{\alpha}(t),\phi\rangle-\langle W^{\alpha}_0,\phi\rangle& =\alpha^2\langle Au^{\alpha}(t),\phi\rangle-\alpha^2\langle Au^{\alpha}_0,\phi\rangle+\nu \int_0^t \langle A u^{\alpha}(s), \phi\rangle ds\\ & -\int_0^t b(u^{\alpha}(s),W^{\alpha}(s),\phi)ds -\int_0^tb(W^{\alpha}(s),\Bar{u}(s),\phi) ds\\ & -\alpha^2 \int_0^t b(\phi, \Delta u^{\alpha}(s),u^{\alpha}(s))ds+\alpha^2 \int_0^t b(u^{\alpha}(s), \Delta u^{\alpha}(s),\phi)ds\\ & \Tilde{\nu} \int_0^t\langle F(u^{\alpha})(s),\phi\rangle ds +\sqrt{\Tilde{\nu}}\sum_{k\in K}\int_0^t\langle G^k(u^{\alpha}(s)),\phi\rangle d W^k_s.
\end{align*}
Following the idea of \cite{kato1984remarks}, let $v$ the corrector of the boundary layer of width $\delta$, i.e. a divergence free vector field with support in a strip of the boundary of width $\delta$ such that $\Bar{u}-v\in V$ and \begin{align}\label{property bl corrector}
    \operatorname{sup}_{t\in[0,T]}\lVert \partial_t^l v\rVert\lesssim \delta^{1/2},\ \ \ \operatorname{sup}_{t\in[0,T]}\lVert \partial_t^l \nabla v\rVert\lesssim \delta^{-1/2}, \ l\in\{0,1\}.
\end{align}
Let $\delta=\delta(\alpha)$ such that
\begin{align}\label{assumptions delta}
    \lim_{\alpha\rightarrow 0}\delta=0,\ \ \lim_{\alpha\rightarrow 0} \frac{\alpha^2}{\delta}=0.
\end{align}
We want to write the It\^o's formula for $\lVert W^{\alpha}(t)\rVert^2$. Let us take an orthonormal basis of $H$, $\{\Tilde{e}_i\}$ made by eigenvectors of $A$, let $\{-\Tilde{\lambda}_i\}$ the corresponding eigenvalues. Let us consider the weak formulation with test functions $\phi=\Tilde{e}_i$, let us call $W^{\alpha,n}=\sum_{i=1}^n \langle W^{\alpha},\tilde{e}_i\rangle \tilde{e}_i$, $u^{\alpha,n}=\sum_{i=1}^n \langle u^{\alpha},\tilde{e}_i\rangle \tilde{e}_i$, $\Bar{u}^n=\sum_{i=1}^n \langle \Bar{u},\tilde{e}_i\rangle \tilde{e}_i$ e $v^n=\sum_{i=1}^n\langle v,\tilde{e}_i\rangle \tilde{e}_i$, then, arguing as in the proof of Lemma \ref{thm energy inequality}, we get  
\begin{align}\label{ito formula 1}
    W^{\alpha,n}(t)-W^{\alpha,n}_0& =\alpha^2 Au^{\alpha,n}(t)-\alpha^2 Au^{\alpha,n}_0+\nu\int_0^t  Au^{\alpha,n}(s)ds\notag\\ &-\int_0^t \sum_{i=1}^n b(u^{\alpha},W^{\alpha}(s),\tilde{e}_i)\tilde{e}_ids-\int_0^t \sum_{i=1}^n b(W^{\alpha}(s),\Bar{u}(s),\tilde{e}_i)\tilde{e}_ids\notag\\ & -\alpha^2\int_0^t \sum_{i=1}^n b(\tilde{e}_i,\Delta u^{\alpha}(s),u^{\alpha}(s)) ds +\alpha^2\int_0^t \sum_{i=1}^n b(u^{\alpha}(s),\Delta u^{\alpha}(s),\tilde{e}_i) ds\notag\\ & +\Tilde{\nu}\int_0^t\sum_{i=1}^n\langle F(u^{\alpha}(s)),\tilde{e}_i\rangle \tilde{e}_ids+\sqrt{\Tilde{\nu}}\sum_{k\in K}\int_0^t \sum_{i=1}^n\langle G^k(u^{\alpha}(s)), \tilde{e}_i\rangle \tilde{e}_i dW^k_s.
\end{align}
Therefore \begin{align}\label{ito formula 2}
    d\lVert W^{\alpha,n}\rVert^2&=2\langle W^{\alpha,n},dW^{\alpha,n}\rangle+\alpha^4 d\langle\langle Au^{\alpha,n},Au^{\alpha,n}\rangle\rangle_t+\Tilde{\nu} \sum_{k\in K}\sum_{i=1}^n \langle G^k(u^{\alpha}), \tilde{e}_i\rangle^2 dt\notag\\ &+\alpha^2\sqrt{\Tilde{\nu}}\sum_{k\in K}  \sum_{i=1}^n \langle G^k(u^{\alpha}),\tilde{e}_i\rangle d\langle\langle \tilde{e}_i W^k, Au^{\alpha,n}\rangle\rangle_t.\end{align}
In the same way, considering the weak formulation satisfied by $u^{\alpha}$, we get 
\begin{align}\label{ito formula 3}
    d Au^{\alpha,n}&=\left(\nu A(I-\alpha^2 A)^{-1}Au^{\alpha,n}-\sum_{i=1}^n b(u^{\alpha,n},u^{\alpha,n},\tilde{e}_i)A(I-\alpha^2 A)^{-1}\tilde{e}_i\right)dt\notag\\ &-\alpha^2 \sum_{i=1}^n b(\tilde{e}_i,\Delta u,u )A(I-\alpha^2 A)^{-1}\tilde{e}_i dt \notag\\ &+\Tilde{\nu}\sum_{i=1}^n\langle F(u^{\alpha}), \tilde{e}_i\rangle A(I-\alpha^2 A)^{-1}\tilde{e}_i dt\notag\\ &+\sqrt{\Tilde{\nu}}\sum_{k\in K}\sum_{i=1}^n\langle G^k(u^{\alpha}), \tilde{e}_i\rangle A(I-\alpha^2 A)^{-1}\tilde{e}_i dW^k_t.
\end{align}
Combining relation \eqref{ito formula 1}, \eqref{ito formula 2}, \eqref{ito formula 3} we obtain
\begin{align*}
    d\lVert W^{\alpha,n}\rVert^2&=2\alpha^2\langle W^{\alpha,n},dAu^{\alpha,n}\rangle+2\nu\langle W^{\alpha,n},Au^{\alpha,n}\rangle dt\\ &-2\langle W^{\alpha,n},\sum_{i=1}^n b(W^{\alpha},\Bar{u},\tilde{e}_i)\tilde{e}_i\rangle dt-2\langle W^{\alpha,n},\sum_{i=1}^n b(u^{\alpha},W^{\alpha},\tilde{e}_i)\tilde{e}_i\rangle dt\\ & -2\alpha^2\langle W^{\alpha,n},\sum_{i=1}^n b(\tilde{e}_i,\Delta u^{\alpha},u^{\alpha})\tilde{e}_i\rangle dt+2\alpha^2\langle W^{\alpha,n},\sum_{i=1}^n b(u^{\alpha},\Delta u^{\alpha},\tilde{e}_i)\tilde{e}_i\rangle \ dt\\ &+2\Tilde{\nu}\sum_{i=1}^n\langle F(u^{\alpha}),\tilde{e}_i\rangle \langle W^{\alpha,n},\tilde{e}_i\rangle dt+2\sqrt{\Tilde{\nu}}\sum_{k\in K}\sum_{i=1}^n \langle G^{k}(u),\tilde{e}_i\rangle\langle W^{\alpha,n},\tilde{e}_i\rangle dW^k_t\\ &+\alpha^4 \Tilde{\nu} \sum_{k\in K}\sum_{i=1}^n \langle G^k(u^{\alpha}), \tilde{e}_i\rangle^2 \lVert A(I-\alpha^2 A)^{-1}\tilde{e}_i\rVert^2 \ dt\\ &+\Tilde{\nu} \sum_{k\in K}\sum_{i=1}^n\langle G^k(u^{\alpha}), \tilde{e}_i\rangle^2 dt +\alpha^2\Tilde{\nu}\sum_{k\in K}\sum_{i=1}^n \langle G^k(u^{\alpha}), \tilde{e}_i\rangle^2\langle A(I-\alpha^2 A)^{-1}\tilde{e}_i,\tilde{e}_i\rangle dt.
\end{align*}
Let us rewrite $\langle W^{\alpha,n},dAu^{\alpha,n}\rangle$ in a different way
\begin{align*}
    \langle W^{\alpha,n},dAu^{\alpha,n}\rangle&=\langle u^{\alpha,n}-\Bar{u}^n,dAu^{\alpha,n}\rangle=\langle u^{\alpha,n},dAu^{\alpha,n}\rangle-\langle \Bar{u}^n-v^n,dAu^{\alpha,n}\rangle-\langle v^n,dAu^{\alpha,n}\rangle\\ &= -\langle (-A)^{1/2} u^{\alpha,n},d(-A)^{1/2}u^{\alpha,n}\rangle+\langle (-A)^{1/2}(\Bar{u}^n-v^n),d(-A)^{1/2}u^{\alpha,n}\rangle-\langle v^n,dAu^{\alpha,n}\rangle\\ & =-\frac{d\lVert (-A)^{1/2}u^{\alpha,n}\rVert^2}{2}+\frac{d\langle\langle (-A)^{1/2}u^{\alpha,n},(-A)^{1/2}u^{\alpha,n} \rangle\rangle_t}{2}+d\langle (-A)^{1/2}(\Bar{u}^n-v^n),(-A)^{1/2}u^{\alpha,n}\rangle\\ &- \langle (-A)^{1/2}\partial_t(\Bar{u}^n-v^n),(-A)^{1/2}u^{\alpha,n}\rangle-d\langle v^n,Au^{\alpha,n}\rangle+\langle \partial _t v^n,Au^{\alpha,n}\rangle.
\end{align*}
Therefore, we arrive to this final expression
\begin{align*}
    \lVert W^{\alpha,n}(t)\rVert^2& =\lVert W^{\alpha,n}_0\rVert^2-\alpha^2\lVert \nabla u^{\alpha,n}(t)\rVert_{L^2}^2+\alpha^2\lVert \nabla u^{\alpha,n}_0\rVert_{L^2}^2\\ &+\Tilde{\nu}\alpha^2\sum_{k\in K}\int_0^t\sum_{i=1}^n\langle G^k(u^{\alpha}(s)),\tilde{e}_i\rangle^2\lVert (-A)^{1/2}(I-\alpha^2A)^{-1}\tilde{e}_i\rVert^2ds\\ &+2\alpha^2\langle \nabla (\Bar{u}^n-v^n)(t),\nabla u^{\alpha,n}(t)\rangle_{L^2}-2\alpha^2\langle \nabla (\Bar{u}^n-v^n)_0,\nabla u^{\alpha,n}_0\rangle_{L^2}\\ & -2\alpha^2\int_0^t \langle\nabla \partial_s(\Bar{u}^n(s)-v^n(s)),\nabla u^{\alpha,n}(s)\rangle_{L^2} ds\\ &-2\alpha^2 \langle v^n(t),\Delta u^{\alpha,n}(t)\rangle_{L^2}+2\alpha^2\langle v^n_0,\Delta u^{\alpha,n}_0\rangle_{L^2}\\ &+2\alpha^2\int_0^t \langle \partial_s v^n(s),\Delta u^{\alpha,n}(s)\rangle_{L^2} ds +2\nu\int_0^t\langle W^{\alpha,n}(s),Au^{\alpha,n}(s)\rangle ds\\ &-2\int_0^t\langle W^{\alpha,n}(s),\sum_{i=1}^n b(W^{\alpha}(s),\Bar{u}(s),\tilde{e}_i)\tilde{e}_i\rangle ds-2\int_0^t\langle W^{\alpha,n}(s),\sum_{i=1}^n b(u^{\alpha}(s),W^{\alpha}(s),\tilde{e}_i)\tilde{e}_i\rangle ds\\ & -2\alpha^2\int_0^t\langle W^{\alpha,n}(s),\sum_{i=1}^n b(\tilde{e}_i,\Delta u^{\alpha}(s),u^{\alpha}(s))\tilde{e}_i\rangle ds+2\alpha^2\int_0^t\langle W^{\alpha,n}(s),\sum_{i=1}^n b(u^{\alpha}(s),\Delta u^{\alpha}(s),\tilde{e}_i)\tilde{e}_i\rangle ds\\ &+2\Tilde{\nu}\sum_{i=1}^n\int_0^t\langle F(u^{\alpha}(s)),\tilde{e}_i\rangle \langle W^{\alpha,n}(s),\tilde{e}_i\rangle ds+2\sqrt{\Tilde{\nu}}\sum_{k\in K}\int_0^t\sum_{i=1}^n \langle G^k(u^{\alpha}(s)),\tilde{e}_i\rangle \langle W^{\alpha,n}(s),\tilde{e}_i\rangle d W^k_s\\ &+\alpha^4 \Tilde{\nu} \sum_{k\in K}\int_0^t\sum_{i=1}^n \langle G^k(u^{\alpha}(s)), \tilde{e}_i\rangle^2 \lVert A(I-\alpha^2 A)^{-1}\tilde{e}_i\rVert^2  ds\\ &+\Tilde{\nu} \sum_{k\in K}\int_0^t\sum_{i=1}^n\langle G^k(u^{\alpha}(s)), \tilde{e}_i\rangle^2 ds +\alpha^2\Tilde{\nu}\sum_{k\in K}\int_0^t\sum_{i=1}^n \langle G^k(u^{\alpha}(s)), \tilde{e}_i\rangle^2\langle A(I-\alpha^2 A)^{-1}\tilde{e}_i,\tilde{e}_i\rangle ds.
\end{align*}
Now, letting $n\rightarrow +\infty$, exploiting the regularity of  $u^{\alpha},\ \Bar{u},\ v$ and the continuity of the trilinear form $b$ we arrive to the formula below
\begin{align*}
    \lVert W^{\alpha}(t)\rVert^2+\alpha^2\lVert \nabla u^{\alpha}(t)\rVert_{L^2}^2& =\lVert W^{\alpha}_0\rVert^2+\alpha^2\lVert \nabla u^{\alpha}_0\rVert_{L^2}^2\\ &+\alpha^2\Tilde{\nu}\sum_{k\in K}\int_0^t\lVert (-A)^{1/2}(I-\alpha^2A)^{-1}G^k(u^{\alpha}(s))\rVert^2ds\\ &+2\alpha^2\langle \nabla (\Bar{u}-v)(t),\nabla u^{\alpha}(t)\rangle_{L^2}-2\alpha^2\langle \nabla (\Bar{u}-v)_0,\nabla u^{\alpha}_0\rangle_{L^2}\\ & -2\alpha^2\int_0^t \langle\nabla \partial_s(\Bar{u}(s)-v(s)),\nabla u^{\alpha}(s)\rangle_{L^2} ds\\ &-2\alpha^2 \langle v(t),\Delta u^{\alpha}(t)\rangle_{L^2}+2\alpha^2\langle v_0,\Delta u^{\alpha}_0\rangle_{L^2}\\ &+2\alpha^2\int_0^t \langle \partial_s v(s),\Delta u^{\alpha}(s)\rangle_{L^2} ds +2\nu\int_0^t\langle W^{\alpha}(s),Au^{\alpha}(s)\rangle ds\\ &-2\int_0^t b(W^{\alpha}(s),\Bar{u}(s),W^{\alpha}(s))\rangle ds\\ & -2\alpha^2\int_0^t b(W^{\alpha}(s),\Delta u^{\alpha}(s),u^{\alpha}(s)) ds+2\alpha^2\int_0^tb(u^{\alpha}(s),\Delta u^{\alpha}(s),W^{\alpha}(s)) ds\\ &+2\Tilde{\nu}\int_0^t\langle F(u^{\alpha}(s)),W^{\alpha}(s)\rangle ds +2\sqrt{\Tilde{\nu}}\sum_{k\in K}\int_0^t \langle G^k(u^{\alpha}(s)),W^{\alpha}(s)\rangle  dW^k_s\\ &+\alpha^4 \Tilde{\nu} \sum_{k\in K} \int_0^t\lVert A(I-\alpha^2 A)^{-1}G^k(u^{\alpha}(s))\rVert^2 ds \\ &+\Tilde{\nu} \sum_{k\in K}\int_0^t\lVert G^k(u^{\alpha}(s))\rVert^2 ds +\alpha^2\Tilde{\nu}\sum_{k\in K}\int_0^t\langle A(I-\alpha^2A)^{-1}G^k(u^{\alpha}(s)),G^k(u^{\alpha}(s))\rangle ds\\ &= I_1(t)+I_2(t)+I_3(t)+I_4(t)+I_5(t)+I_6(t)+M(t),
\end{align*}
where:
\begin{align*}
    I_1(t)&=\lVert W^{\alpha}_0\rVert^2+\alpha^2\lVert \nabla u^{\alpha}_0\rVert_{L^2}^2+2\alpha^2\langle \nabla (\Bar{u}-v)(t),\nabla u^{\alpha}(t)\rangle_{L^2}-2\alpha^2\langle \nabla (\Bar{u}-v)_0,\nabla u^{\alpha}_0\rangle_{L^2}
    \\ &-2\alpha^2 \langle v(t),\Delta u^{\alpha}(t)\rangle_{L^2}+2\alpha^2\langle v_0,\Delta u^{\alpha}_0\rangle_{L^2},
\end{align*}
\begin{align*}
    I_2(t)&= \alpha^2\Tilde{\nu}\sum_{k\in K}\int_0^t\lVert (-A)^{1/2}(I-\alpha^2A)^{-1}G^k(u^{\alpha})\rVert^2ds+\alpha^4 \Tilde{\nu} \sum_{k\in K} \int_0^t\lVert A(I-\alpha^2 A)^{-1}G^k(u^{\alpha}(s))\rVert^2 ds\\ &+\Tilde{\nu} \sum_{k\in K}\int_0^t\lVert G^k(u^{\alpha}(s))\rVert^2 ds +\alpha^2\Tilde{\nu}\sum_{k\in K}\int_0^t\langle A(I-\alpha^2A)^{-1}G^k(u^{\alpha}(s)),G^k(u^{\alpha}(s))\rangle ds\\ & +2\Tilde{\nu}\int_0^t \langle F(u^{\alpha}(s)),W^{\alpha}(s)\rangle ds,
\end{align*}
\begin{align*}
    I_3(t) &=-2\alpha^2\int_0^t \langle\nabla \partial_s(\Bar{u}(s)-v(s)),\nabla u^{\alpha}(s)\rangle_{L^2} ds
    +2\alpha^2\int_0^t \langle \partial_s v(s),\Delta u^{\alpha}(s)\rangle_{L^2} ds,
\end{align*}
\begin{align*}
    I_4(t)&=2\nu\int_0^t\langle W^{\alpha}(s),Au^{\alpha}(s)\rangle ds,
\end{align*}
\begin{align*}
    I_5(t) &=-2\int_0^t b(W^{\alpha}(s),\Bar{u}(s),W^{\alpha}(s))\rangle ds,
\end{align*}
\begin{align*}
    I_6(t) &=-2\alpha^2\int_0^t b(W^{\alpha}(s),\Delta u^{\alpha}(s),u^{\alpha}(s)) ds+2\alpha^2\int_0^tb(u^{\alpha}(s),\Delta u^{\alpha}(s),W^{\alpha}(s)) ds,
\end{align*}
\begin{align*}
    M(t)=2\sqrt{\Tilde{\nu}}\sum_{k\in K}\int_0^t \langle G^k(u^{\alpha}(s)),W^{\alpha}(s)\rangle  dW^k_s.
\end{align*}

Our approach is almost completely pathwise. Therefore we need to estimate the terms $I_i(t),\ t\in \{1,\dots,6\}$.
The analysis of $I_1(t)$ follows by Young's inequality, the estimates on the boundary layer corrector \eqref{property bl corrector} and the interpolation estimate \eqref{interpolation estimate}

\begin{align}\label{I_1 estimate}
    I_1(t)  & \leq \lVert W_0^{\alpha}\rVert^2+\alpha^2\lVert \nabla u_0^{\alpha}\rVert_{L^2}^2+C\alpha^2\delta^{1/2}\lVert \nabla u_0^{\alpha}\rVert_{L^2}^{1/2}\lVert u_0^{\alpha}\rVert_{H^3}^{1/2} \notag\\ & + C\alpha^2(1+\delta^{-1/2})\lVert \nabla u^{\alpha}(t)\rVert_{L^2}+C\alpha^2\delta^{1/2}\lVert \nabla u(t)^{\alpha}\rVert^{1/2}_{L^2}\lVert u(t)^{\alpha}\rVert_{H^3}^{1/2} \notag\\ & +C\alpha^2(1+\delta^{-1/2})\lVert \nabla u_0^{\alpha}\rVert_{L^2}\notag\\ & \leq \lVert W_0^{\alpha}\rVert^2+C\alpha^2\lVert \nabla u_0^{\alpha}\rVert_{L^2}^2+C\alpha^2(1+\delta^{-1})+C\alpha^6\delta(\lVert u_0^{\alpha}\rVert_{H^3}^2+\lVert u(t)^{\alpha}\rVert_{H^3}^2)+C\delta^{1/2}+\frac{\alpha^2}{2}\lVert \nabla u^{\alpha}(t)\rVert_{L^2}^2.
\end{align}
The analysis of $I_2(t)$ follows by Young's inequality and the results of Lemma \ref{behavior G^k, F}, Corollary \ref{regularity of G^k, F}. Indeed it holds \begin{align}\label{I_2 estimate}
    I_2(t)&\leq C \Tilde{\nu}\sum_{k\in K} \lVert \sigma_k\rVert_{L^{\infty}}^2\int_0^t  \lVert \nabla u^{\alpha}(s)\rVert_{L^2}^2 ds +\frac{\Tilde{\nu}}{\alpha}\sum_{k\in K} \lVert \sigma_k\rVert_{L^{\infty}}^2 \int_0^t \lVert  W^{\alpha}(s)\rVert \lVert \nabla u^{\alpha}(s)\rVert_{L^2} ds \notag \\ & \leq 
    C \Tilde{\nu}\sum_{k\in K} \lVert \sigma_k\rVert_{L^{\infty}}^2\int_0^t  \lVert \nabla u^{\alpha}(s)\rVert_{L^2}^2 ds+\sum_{k\in K} \lVert \sigma_k\rVert_{L^{\infty}}^2 \int_0^t \lVert  W^{\alpha}(s)\rVert^2 ds+\left(\frac{\Tilde{\nu}}{\alpha}\right)^2\sum_{k\in K} \lVert \sigma_k\rVert_{L^{\infty}}^2 \int_0^t  \lVert \nabla u^{\alpha}(s)\rVert_{L^2}^2 ds.
\end{align}

The analysis of $I_3(t)$ follows by Young's inequality, the estimates on the boundary layer corrector \eqref{property bl corrector} and the interpolation estimate \eqref{interpolation estimate}
\begin{align}\label{I_3 estimate }
    I_3(t)&\leq C\alpha^2(1+\delta^{-1/2})\int_0^t\lVert \nabla u^{\alpha}(s)\rVert_{L^2} ds+C\alpha^2\delta^{1/2}\int_0^t \lVert \nabla u^{\alpha}(s)\rVert_{L^2}^{1/2}\lVert u^{\alpha}(s)\rVert_{H^3}^{1/2} \notag\\ & \leq C\delta^{1/2}+C\alpha^2(1+\delta^{-1})+C\alpha^2\int_0^t \lVert \nabla u^{\alpha}(s)\rVert_{L^2}^2 ds +C\alpha^6\delta\int_0^t\lVert u^{\alpha}(s)\rVert_{H^3}^2 ds.
\end{align}
The analysis of $I_4(t)$ is analogous to equations (3.20)-(3.21)-(3.22) in \cite{lopes2015approximation}, it implies: \begin{align*}
        2\nu\int_0^t\langle W^{\alpha}(s),Au^{\alpha}(s)\rangle ds &\leq -2\nu \int_0^t\lVert \nabla u^{\alpha}(s)\rVert_{L^2}^2 ds+C\frac{\nu}{\alpha}(1+\delta^{-1/2})\int_0^t\alpha \lVert\nabla u^{\alpha}(s)\rVert_{L^2} ds\\ & +\frac{C\nu \delta^{1/2}}{\alpha^2}\int_0^t \alpha^2\lVert \Delta u^{\alpha}(s)\rVert_{L^2} ds.
    \end{align*}
     Therefore by the interpolation inequality \eqref{interpolation estimate} and Young's inequality we have \begin{align}\label{I_4 estimate}
     2\nu\int_0^t\langle W^{\alpha}(s),Au^{\alpha}(s)\rangle ds &\leq -2\nu \int_0^t \lVert \nabla u^{\alpha}(s)\rVert_{L^2}^2 ds+C\alpha^2 \int_0^t \lVert \nabla u^{\alpha}(s)\rVert_{L^2}^2 ds \notag\\ & +C\alpha^6\delta\int_0^t\lVert u^{\alpha}(s)\rVert_{H^3}^2 ds+C\left(\frac{\nu}{\alpha^2}\right)^2\delta^{1/2}+C\left(\frac{\nu}{\alpha}\right)^2(1+\delta^{-1}).
 \end{align}
 The analysis of $I_5(t)$ follows immediately by H\"older's inequality:
 \begin{align}\label{I_5 estimate}
I_5(t)\leq \lVert \bar{u}\rVert_{L^{\infty}(0,T;H^3)}\int_0^t\lVert W^{\alpha}(s)\rVert^2 ds.
 \end{align}
For what concerns the analysis of $I_6(t)$, preliminary we observe that
\begin{align*}
       -2\alpha^2 b(W^{\alpha},\Delta u^{\alpha},u^{\alpha}) +2\alpha^2 b(u^{\alpha},\Delta u^{\alpha},W^{\alpha})&=2\alpha^2 b(\Bar{u},\Delta u^{\alpha},u^{\alpha})-2\alpha^{2}b(u^{\alpha},\Delta u^{\alpha},u^{\alpha})\\ & +2\alpha^2 b(u^{\alpha}, \Delta u^{\alpha}, u^{\alpha})-2\alpha^{2}b(u^{\alpha},\Delta u^{\alpha},\Bar{u}).
    \end{align*}
    Arguing as in \cite{lopes2015convergence}, equations (4.18)-(4.19) we get
    \begin{align}\label{I_6 estimate}
        I_6(t)&\leq C \alpha^2\left(1+\lVert \Bar{u}\rVert_{L^{\infty}(0,T;H^3)}\right)\int_0^t \lVert \nabla u^{\alpha}(s)\rVert_{L^2}^2ds+C\alpha^2\lVert \Bar{u}\rVert_{L^{\infty}(0,T;H^3)}^4\int_0^t \lVert  u^{\alpha}(s)\rVert^2 ds.
    \end{align}
Combining equations \eqref{I_1 estimate},\eqref{I_2 estimate},\eqref{I_3 estimate },\eqref{I_4 estimate},\eqref{I_5 estimate},\eqref{I_6 estimate} and exploiting our assumptions on the behavior of $\nu,\ \Tilde{\nu}, \alpha^2$, see Hypothesis \ref{Hypothesis inviscid limit}, we have the integral relation below: \begin{align}\label{pathwise estimate}
    \lVert W^{\alpha}(t)\rVert^2+\frac{\alpha^2}{2}\lVert \nabla u^{\alpha}(t)\rVert_{L^2}^2 &\leq M(t)+C\alpha^2(1+\delta^{-1})+C\delta^{1/2}+\lVert W^{\alpha}_0\rVert^2+C\alpha^2\lVert \nabla u_0^{\alpha}\rVert_{L^2}^2 \notag+C\alpha^6\delta(\lVert u_0^\alpha\rVert_{H^3}^2+\lVert u(t)^\alpha\rVert_{H^3}^2) \\ & +C_{\{\sigma_k\}_{k\in K}}\int_0^t \lVert W^{\alpha}(s)\rVert^2+{\alpha^2}\lVert \nabla u^{\alpha}(s)\rVert_{L^2}^2 ds+C\alpha^6\delta\int_0^t\lVert  u^{\alpha}(s)\rVert_{H^3}^2 ds+C\alpha^2\int_0^t\lVert u^{\alpha}(s)\rVert^2 ds.
\end{align}
By the stochastic Gr\"onwall's Lemma \ref{stochastic gronwall } above we have:
\begin{align}
    &\operatorname{sup}_{t\in [0,T]}\mathbb{E}\left[\lVert W^{\alpha}(t)\rVert^2\right]+\alpha^2\operatorname{sup}_{t\in [0,T]}\mathbb{E}\left[\lVert \nabla u^{\alpha}(t)\rVert_{L^2}^2\right]\notag\\ &\leq C_{\{\sigma_k\}_{k\in K}}\left(\alpha^2\mathbb{E}\left[\int_0^T \lVert u^{\alpha}(s)\rVert^2 ds\right]+\alpha^6\delta \int_0^T \lVert u^{\alpha}(s)\rVert_{H^3}^2 ds+\alpha^6\delta\mathbb{E}\left[\operatorname{sup}_{t\in[0,T]}\lVert u^{\alpha}(t)\rVert_{H^3}^2\right]\right)\notag\\ & +C_{\{\sigma_k\}_{k\in K}}\left(\alpha^2(1+\delta^{-1})+\delta^{1/2}+\mathbb{E}\left[\lVert W^{\alpha}_0\rVert^2+\alpha^2\lVert \nabla u_0^{\alpha}\rVert_{L^2}^2 +\alpha^6\delta\lVert u_0^\alpha\rVert_{H^3}^2\right]\right).
\end{align}
Thanks to Hypothesis \ref{Hypothesis inviscid limit} and our assumptions on $\delta$, see equation \eqref{assumptions delta}, we have that
\begin{align}\label{prefinal 1}
    \alpha^2(1+\delta^{-1})+\delta^{1/2}+\mathbb{E}\left[\lVert W^{\alpha}_0\rVert^2+\alpha^2\lVert \nabla u_0^{\alpha}\rVert_{L^2}^2 +\alpha^6\delta\lVert u_0^\alpha\rVert_{H^3}^2\right]\rightarrow 0.
\end{align}
Thanks to Lemma \ref{thm energy inequality}, we have that
\begin{align}\label{prefinal 2}
\alpha^2\mathbb{E}\left[\int_0^T \lVert u^{\alpha}(s)\rVert^2 ds\right]+\alpha^6\delta \int_0^T \lVert u^{\alpha}(s)\rVert_{H^3}^2 ds+\alpha^6\delta\mathbb{E}\left[\operatorname{sup}_{t\in[0,T]}\lVert u^{\alpha}(t)\rVert_{H^3}^2\right]\rightarrow 0.    
\end{align}
Therefore \begin{align}\label{weaker convergence final}
    \operatorname{sup}_{t\in [0,T]}\mathbb{E}\left[\lVert W^{\alpha}(t)\rVert^2\right]+\alpha^2\operatorname{sup}_{t\in [0,T]}\mathbb{E}\left[\lVert \nabla u^{\alpha}(t)\rVert_{L^2}^2\right]\rightarrow 0.
\end{align}
Restarting from equation \eqref{pathwise estimate} and considering the expected value of the supremum of both the terms in the left hand side we have
\begin{align}\label{prefinal strong convergence}
    &\mathbb{E}\left[\operatorname{sup}_{t\in [0,T]}\lVert W^{\alpha}(t)\rVert^2\right]+\alpha^2\mathbb{E}\left[\operatorname{sup}_{t\in [0,T]}\lVert \nabla u^{\alpha}(t)\rVert_{L^2}^2\right]\notag \\ & \leq C\left(\alpha^2\mathbb{E}\left[\int_0^T \lVert u^{\alpha}(s)\rVert^2 ds\right]+\alpha^6\delta \int_0^T \lVert u^{\alpha}(s)\rVert_{H^3}^2 ds+\alpha^6\delta\mathbb{E}\left[\operatorname{sup}_{t\in[0,T]}\lVert u^{\alpha}(t)\rVert_{H^3}^2\right]\right)\notag\\ & +C\left(\alpha^2(1+\delta^{-1})+\delta^{1/2}+\mathbb{E}\left[\lVert W^{\alpha}_0\rVert^2+\alpha^2\lVert \nabla u_0^{\alpha}\rVert_{L^2}^2 +\alpha^6\delta\lVert u_0^\alpha\rVert_{H^3}^2\right]\right)\notag\\ & +C\mathbb{E}\left[\operatorname{sup}_{t\in [0,T]}M(t)\right]+C_{\{\sigma_k\}_{k\in K}}\mathbb{E}\left[\int_0^T \lVert W^{\alpha}(s)\rVert^2+{\alpha^2}\lVert \nabla u^{\alpha}(s)\rVert_{L^2}^2 ds\right].
\end{align} We already proved that almost all the terms in the right hand side of equation \eqref{prefinal strong convergence} go to $0$.
Therefore in order to complete the proof we left to show that \begin{align*}
    \mathbb{E}\left[\operatorname{sup}_{t\in [0,T]}M(t)\right]+\mathbb{E}\left[\int_0^t \lVert W^{\alpha}(s)\rVert^2+{\alpha^2}\lVert \nabla u^{\alpha}(s)\rVert_{L^2}^2 ds\right]\rightarrow 0.
\end{align*}By the weaker convergence described by equation \eqref{weaker convergence final} and Fubini Theorem \begin{align*}
    \mathbb{E}\left[\int_0^t \lVert W^{\alpha}(s)\rVert^2+{\alpha^2}\lVert \nabla u^{\alpha}(s)\rVert_{L^2}^2 ds\right]\rightarrow 0.
\end{align*} For what concerns the other, the convergence follows by Burkholder-Davis-Gundy inequality, Hypothesys \ref{Hypothesis inviscid limit}, equation \eqref{weaker convergence final}, Fubini Theorem and Lemma \eqref{energy estimates}. Indeed \begin{align*}
    \mathbb{E}\left[\operatorname{sup}_{t\in [0,T]}M(t)\right]& \leq C   \sqrt{\Tilde{\nu}}\mathbb{E}\left[\left(\sum_{k\in K}\int_0^T\lVert G^k(u^{\alpha}(s))\rVert^2 \lVert W^{\alpha}(s)\rVert^2ds\right)^{1/2}\right]\\ & 
        \leq C\left(\sum_{k\in K}\lVert \sigma_k\rVert_{L^{\infty}}^2\right)^{1/2} \sqrt{\Tilde{\nu}}\mathbb{E}\left[\left(\int_0^T\lVert \nabla u^{\alpha}(s)\rVert_{L^2}^2\lVert W^{\alpha}(s)\rVert^2ds\right)^{1/2}\right]\\ & \leq C\left(\sum_{k\in K}\lVert \sigma_k\rVert_{L^{\infty}}^2\right)^{1/2} \sqrt{\Tilde{\nu}}\mathbb{E}\left[\operatorname{sup}_{t\in [0,T]}\lVert \nabla u^{\alpha}(t)\rVert_{L^2}\left(\int_0^T \lVert W^{\alpha}(s)\rVert^2ds\right)^{1/2}\right]\\ & \leq C\left(\sum_{k\in K}\lVert \sigma_k\rVert_{L^{\infty}}^2\right)^{1/2} \mathbb{E}\left[\int_0^T \lVert W^{\alpha}(s)\rVert^2ds\right]^{1/2}\left(\mathbb{E}\left[\alpha^2\operatorname{sup}_{t\in [0,T]}\lVert \nabla u^{\alpha}(t)\rVert_{L^2}^2\right]\right)^{1/2}\\ &\rightarrow 0.
\end{align*}
Now the proof is complete.
\end{proof}

\begin{remark}
Combining Lemma \ref{thm energy inequality} and Theorem \ref{Main thm} we understand that, if $\nu=O(\alpha^2)$ and $\Tilde{\nu}=O(\alpha^2)$, the assumptions on the behavior of the initial conditions $u_0^{\alpha}$ in norm $H,\ H^1$ and $H^3$ are satisfied also for $t\in [0,T]$.
\end{remark}

\section{The Case of Additive Noise}\label{additive noise}
For what concerns the case with additive noise, as stated in Section \ref{results}, the well-posedness is a well-known fact in case of $\nu>0$ and we can prove a result completely analogous to Theorem \ref{Main thm}, following exactly the same argument. However, the restriction $\nu>0$ can be omitted modifying slightly the proof of \cite{razafimandimby2012strong} as described in Remarks \ref{changes Lemma energy W}, \ref{changes crucial lemma} and \ref{remark thm energy inequality}. However, we do not stress this assumption in this Section, therefore $\nu>0$ in what follows. What was crucial for the proof of Theorem \ref{Main thm} were the energy estimates of Lemma \ref{thm energy inequality}. Thus in this Section we want to explain a different approach to prove these energy estimates in the case of additive noise. These computations are more similar to what happens in the deterministic framework. We keep previous assumptions on the coefficients $\sigma_k$ and the Brownian motions $W^k$. For generality reasons we consider the equations without any scaling factor on the noise. Thus we consider
\begin{align}
\begin{cases}\label{system}
d v=(\nu \Delta u- \operatorname{curl}(v)\times u+\nabla p)dt +\sum_{k\in K} \sigma_k dW^k_t\\ 
\operatorname{div}u=0\\
v=u-\alpha^2\Delta u\\
u|_{\partial D}=0\\ 
u(0)=u_0
\end{cases}
\end{align}
Before going on, we need to recall a result of \cite{lopes2015convergence}.
\begin{Lemma}\label{well posed elliptic}
Let $q\in L^2(D)$, there exists a unique $\phi\in H^2_0(D)$ solution of \begin{align*}
    \begin{cases}
    \Delta \phi -\alpha^2\Delta^2\phi=q\\ 
    \phi|_{\partial D}=\partial_n\phi|_{\partial D}=0
    \end{cases}
\end{align*}
which satisfies
$$\langle \nabla \phi ,\nabla v\rangle_{L^2}+\alpha^2 \langle \Delta \phi, \Delta v\rangle_{L^2}=-\langle q,v\rangle \text{ for each }v\in H^2_0.  $$
Moreover, the solution map is continuous from $L^2(D)$ to $H^2_0(D)\cap H^4(D).$
\end{Lemma}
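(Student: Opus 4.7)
The plan is to derive the variational formulation, apply the Lax-Milgram theorem in $H^2_0(D)$, and then bootstrap to $H^4$ regularity by classical elliptic theory. First, for $v\in H^2_0(D)$, I would multiply the PDE $\Delta\phi-\alpha^2\Delta^2\phi=q$ by $v$ and integrate by parts: a single integration on $\Delta\phi\cdot v$ kills the boundary integral because $v|_{\partial D}=0$, and two integrations on $\Delta^2\phi\cdot v$ kill theirs because additionally $\partial_n v|_{\partial D}=0$. This yields exactly the stated identity
\[
\langle\nabla\phi,\nabla v\rangle_{L^2}+\alpha^2\langle\Delta\phi,\Delta v\rangle_{L^2}=-\langle q,v\rangle,
\]
which I would take as the definition of a weak solution in $H^2_0(D)$.

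Next, I would set up Lax-Milgram on the Hilbert space $H^2_0(D)$ with bilinear form $a(\phi,v):=\langle\nabla\phi,\nabla v\rangle+\alpha^2\langle\Delta\phi,\Delta v\rangle$. Continuity of $a$ is immediate by Cauchy-Schwarz, and coercivity follows from the fact that $\|\Delta\cdot\|_{L^2}$ is an equivalent norm on $H^2_0(D)$: one applies Poincaré once to bound $\|\phi\|_{L^2}$ by $\|\nabla\phi\|_{L^2}$, then the identity $\|\nabla\phi\|_{L^2}^2=-\langle\phi,\Delta\phi\rangle$ together with the Dirichlet elliptic estimate $\|\phi\|_{H^2}\lesssim\|\Delta\phi\|_{L^2}$ for $\phi\in H^1_0\cap H^2$ (valid since $D$ is smooth). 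Hence $a(\phi,\phi)\geq\alpha^2\|\Delta\phi\|_{L^2}^2\gtrsim\alpha^2\|\phi\|_{H^2}^2$. The linear functional $v\mapsto-\langle q,v\rangle$ is continuous since $|\langle q,v\rangle|\leq\|q\|_{L^2}\|v\|_{L^2}\lesssim\|q\|_{L^2}\|v\|_{H^2}$. Lax-Milgram would then deliver a unique weak solution $\phi\in H^2_0(D)$ with $\|\phi\|_{H^2}\leq C\alpha^{-2}\|q\|_{L^2}$.

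To upgrade to $H^4$, I would invoke classical elliptic regularity: the operator $\alpha^2\Delta^2-\Delta$ is uniformly elliptic of order four, and the clamped conditions $\phi=\partial_n\phi=0$ form a complementing boundary system in the Agmon-Douglis-Nirenberg sense. Since $\partial D$ is smooth, this gives
\[
\|\phi\|_{H^4}\leq C(\|q\|_{L^2}+\|\phi\|_{L^2}),
\]
which combined with the $H^2$ estimate from Lax-Milgram yields $\phi\in H^2_0(D)\cap H^4(D)$ and $\|\phi\|_{H^4}\leq C\|q\|_{L^2}$, hence the continuity of the solution map as claimed.

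The main obstacle, such as it is, is the $H^4$ step: it is not a Lax-Milgram conclusion but a quote of a classical fourth-order elliptic regularity theorem, so one should either cite a standard reference (Agmon-Douglis-Nirenberg, or a monograph such as Gazzola-Grunau-Sweers on polyharmonic problems) or carry out a local flattening plus difference-quotient argument tailored to the clamped biharmonic. An alternative splitting approach that sets $w:=\phi-\alpha^2\Delta\phi$ so that $\Delta w=q$ is tempting, but it is awkward because $w$ does not inherit a clean boundary condition: the clamped pair $(\phi|_{\partial D},\partial_n\phi|_{\partial D})=(0,0)$ couples nontrivially with the subsequent Stokes-type problem $\phi-\alpha^2\Delta\phi=w$ rather than decoupling it, so the direct Lax-Milgram-plus-regularity route is both shorter and structurally cleaner.
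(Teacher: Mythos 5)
Your argument is correct, and it is the standard one. Note that the paper itself gives no proof of this lemma: it is recalled verbatim from the cited reference \cite{lopes2015convergence}, so there is no in-paper argument to compare against. Your route --- take the clamped weak formulation as the definition, apply Lax--Milgram on $H^2_0(D)$ with the bilinear form $a(\phi,v)=\langle\nabla\phi,\nabla v\rangle_{L^2}+\alpha^2\langle\Delta\phi,\Delta v\rangle_{L^2}$ (coercive since $\lVert\Delta\cdot\rVert_{L^2}$ is an equivalent norm on $H^2_0(D)$), and then upgrade to $H^4$ by Agmon--Douglis--Nirenberg regularity for the fourth-order operator $\alpha^2\Delta^2-\Delta$ with the clamped boundary conditions --- is exactly how this result is established in the literature, and your observation that the splitting $w=\phi-\alpha^2\Delta\phi$ fails to decouple because $w$ carries no usable boundary data is apt. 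The only thing to make explicit in a final write-up is the reference for the complementing condition of the clamped biharmonic problem (e.g.\ Agmon--Douglis--Nirenberg or Gazzola--Grunau--Sweers), which you already flag yourself.
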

Thanks to this Lemma, we can define an operator $\mathbb{K}:L^2(D)\rightarrow H^3(D)\cap W^{1,\infty}_0(D)$ which associates to each $q\in L^2(D)$ the vector field $u=\nabla^{\perp} \phi$, where $\phi$ is the solution of the equation of Lemma \ref{well posed elliptic}.

\begin{definition}\label{weak Solution system} 
A stochastic process $u$ weakly continuous with values in $W$ and continuous with values in $V$
is a weak solution of equation (\ref{system}) if
$$
u \in L^p(\Omega,\mathcal{F},\mathbb{P};L^{\infty}(0,T;W)),\ \forall p\geq 2.
$$
and $\mathbb{P}-a.s.$ for every $t\in [0,T]$ and $\phi \in D(A)$  we have
\begin{align*}
    \langle u(t), (I-\alpha^2 A)\phi\rangle-  \langle u_0, (I-\alpha^2 A)\phi\rangle&=\nu \int_0^t \langle u(s), A\phi\rangle ds- \int_0^t b(u(s), u(s)-\alpha^2 \Delta u(s),\phi)ds\\ &-\alpha^2 \int_0^t b(\phi, \Delta u(s),u(s))ds+\sum_{k\in K} \langle \sigma_k,\phi\rangle W^k_t.
\end{align*}
\end{definition}
Arguing as in the first part of the proof of Lemma \ref{thm energy inequality} we can prove the following result.
\begin{Lemma}\label{Ito system}
Let $u$ be a weak solution of problem (\ref{system}) in the sense of Definition \ref{weak Solution system}, then the following relations hold true
\begin{enumerate}
    \item $$ d\lVert u\rVert^2+\alpha^2d\lVert \nabla u\rVert_{L^2}^2=(-2\nu\lVert \nabla u\rVert_{L^2}^2+\sum_{k\in K} \langle \sigma_k,(I-\alpha^2A)^{-1}\sigma_k\rangle)dt+ 2\sum_{k\in K} \langle \sigma_k,u \rangle dW^k_t$$
 \item \begin{align*}
     &\mathbb{E}\left[\lVert u(t)\rVert^2\right]+\alpha^2\mathbb{E}\left[\lVert \nabla u(t)\rVert_{L^2}^2\right]+2\nu\int_0^t \mathbb{E}\left[\lVert \nabla u(s)\rVert_{L^2}^2\right]ds=\\& \mathbb{E}\left[\lVert u_0\rVert^2\right]+\alpha^2\mathbb{E}\left[\lVert \nabla u_0\rVert_{L^2}^2\right]+t\sum_{k\in K} \langle \sigma_k,(I-\alpha^2A)^{-1}\sigma_k\rangle
 \end{align*} 
 \item \begin{align*}
     &\mathbb{E}\left[\operatorname{sup}_{t\in [0,T]}\lVert u(t)\rVert^2\right]+\alpha^2\mathbb{E}\left[\operatorname{sup}_{t\in [0,T]}\lVert \nabla u(t)\rVert_{L^2}^2\right]+2\nu\int_0^T \mathbb{E}\left[\lVert \nabla u(s)\rVert_{L^2}^2\right]ds\leq \\& C\left( \mathbb{E}\left[\lVert u_0\rVert^2\right]+\alpha^2\mathbb{E}\left[\lVert \nabla u_0\rVert_{L^2}^2\right]+T\sum_{k\in K} \langle \sigma_k,(I-\alpha^2A)^{-1}\sigma_k\rangle+\mathbb{E}\left[\left(\int_0^T \sum_{k\in K}\langle \sigma_k,u(s) \rangle^2ds\right)^{1/2}\right]\right)
\end{align*}
\end{enumerate}
\end{Lemma}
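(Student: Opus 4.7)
The plan is to mimic the first bullet point in the proof of Lemma \ref{thm energy inequality}, adapted to the additive noise setting where the stochastic integrand is independent of $u$, which simplifies the Itô correction considerably. I would start from the eigenbasis $\{\tilde{e}_i\}$ of the Stokes operator given by Lemma \ref{eigenfunction Stokes} and test the weak formulation of Definition \ref{weak Solution system} against $\phi=\tilde{e}_i$, using $-A\tilde{e}_i=\tilde{\lambda}_i\tilde{e}_i$. This yields, for each $i$, a one-dimensional SDE for $c_i(t):=\langle u(t),\tilde{e}_i\rangle$ of the form
\begin{align*}
(1+\alpha^2\tilde{\lambda}_i)\,dc_i=\bigl(-\nu\tilde{\lambda}_i c_i-b(u,u-\alpha^2\Delta u,\tilde{e}_i)-\alpha^2 b(\tilde{e}_i,\Delta u,u)\bigr)dt+\sum_{k\in K}\langle\sigma_k,\tilde{e}_i\rangle\, dW^k_t,
\end{align*}
so that the martingale part of $c_i$ has (deterministic!) coefficient $(1+\alpha^2\tilde{\lambda}_i)^{-1}\langle\sigma_k,\tilde{e}_i\rangle$.

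Next I would apply Itô's formula to $\sum_{i=1}^n(1+\alpha^2\tilde{\lambda}_i)c_i(t)^2=\lVert \tilde{P}^n u(t)\rVert^2+\alpha^2\lVert\nabla\tilde{P}^n u(t)\rVert_{L^2}^2$, where $\tilde{P}^n$ is the projection onto $\mathrm{span}\{\tilde{e}_1,\dots,\tilde{e}_n\}$. The cross-quadratic-variation terms collapse to
\begin{align*}
\sum_{i=1}^n(1+\alpha^2\tilde{\lambda}_i)\cdot\frac{1}{(1+\alpha^2\tilde{\lambda}_i)^2}\sum_{k\in K}\langle\sigma_k,\tilde{e}_i\rangle^2\,dt=\sum_{k\in K}\sum_{i=1}^n\frac{\langle\sigma_k,\tilde{e}_i\rangle^2}{1+\alpha^2\tilde{\lambda}_i}\,dt,
\end{align*}
which converges as $n\to\infty$ to $\sum_{k\in K}\langle\sigma_k,(I-\alpha^2A)^{-1}\sigma_k\rangle\,dt$, giving exactly the coefficient appearing in statement (1). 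The trilinear drift terms, after passing to the limit, combine into $-2b(u,u-\alpha^2\Delta u,u)-2\alpha^2 b(u,\Delta u,u)=-2b(u,u,u)=0$ by the cancellation already exploited in Lemma \ref{thm energy inequality}, while $-2\nu\sum_i\tilde{\lambda}_i c_i^2\to-2\nu\lVert\nabla u\rVert_{L^2}^2$. Taking $n\to\infty$ (justified as in Lemma \ref{thm energy inequality} using the $W$-integrability of $u$ and the continuity of $b$) yields statement (1).

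Statement (2) then follows immediately by integrating in $t$ and taking expectation: the local martingale $2\sum_k\int_0^t\langle\sigma_k,u(s)\rangle\,dW^k_s$ is in fact a true martingale since $\sigma_k\in L^\infty$ and $u\in L^p(\Omega;L^\infty(0,T;H))$, so its expectation vanishes. For statement (3), I would rewrite the identity of (1) as
\begin{align*}
\lVert u(t)\rVert^2+\alpha^2\lVert\nabla u(t)\rVert_{L^2}^2+2\nu\!\int_0^t\!\!\lVert\nabla u(s)\rVert_{L^2}^2 ds=\lVert u_0\rVert^2+\alpha^2\lVert\nabla u_0\rVert_{L^2}^2+t\!\sum_{k\in K}\!\langle\sigma_k,(I-\alpha^2A)^{-1}\sigma_k\rangle+M(t),
\end{align*}
take the supremum over $t\in[0,T]$, and use the Burkholder--Davis--Gundy inequality on $M$ to bound $\mathbb{E}[\sup_{t}|M(t)|]$ by $C\,\mathbb{E}[(\int_0^T\sum_k\langle\sigma_k,u(s)\rangle^2 ds)^{1/2}]$, which produces the last term on the right-hand side of (3); dropping the nonnegative $\int\lVert\nabla u\rVert_{L^2}^2$ on the right after bounding the sup then gives the claimed inequality.

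The only genuine subtlety is the passage $n\to\infty$ in the Itô identity, because the nonlinear drift involves terms of the form $b(u,u-\alpha^2\Delta u,\tilde{P}^nu)$ and $\alpha^2 b(\tilde{P}^n u,\Delta u,u)$ whose limits must be shown to cancel pathwise. This is handled exactly as in Lemma \ref{thm energy inequality}, using the continuity of $b$ and the fact that $\tilde{P}^n u\to u$ in $V$ almost surely for a.e. $t$; the additive structure of the noise actually makes this step easier than in the transport case, since the stochastic corrector reduces to the deterministic quantity $\sum_k\langle\sigma_k,(I-\alpha^2A)^{-1}\sigma_k\rangle$ and no subtle identification of the limit of a Stratonovich-to-Itô corrector is required.
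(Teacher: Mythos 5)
Your proposal is correct and follows essentially the same route as the paper: the paper proves this lemma by invoking verbatim the first part of the proof of Lemma \ref{thm energy inequality}, i.e.\ projecting onto the Stokes eigenbasis $\{\tilde{e}_i\}$, applying It\^o's formula to $\sum_i(1+\alpha^2\tilde{\lambda}_i)\langle u,\tilde{e}_i\rangle^2$, identifying the quadratic-variation term as $\sum_k\langle\sigma_k,(I-\alpha^2A)^{-1}\sigma_k\rangle$ in the limit, and using the cancellation $b(u,u-\alpha^2\Delta u,u)+\alpha^2 b(u,\Delta u,u)=0$. Your derivations of (2) by taking expectations of the true martingale and of (3) via Burkholder--Davis--Gundy are exactly the intended arguments.
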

Let us introduce the vorticity formulation of (\ref{system}), we denote $s_k=\operatorname{curl}\sigma_k$
\begin{align}\label{vorticity}
\begin{cases}
     dq+\left(\frac{\nu}{\alpha^2}(q-\operatorname{curl}u)+u\cdot\nabla q\right) dt=\sum_{k\in K} s_k  dW^k_t \\ 
    \operatorname{div}u=0\\ 
    q=\operatorname{curl}(u-\alpha^2\Delta u)\\ 
    q(0)=q_0:=\operatorname{curl}(u_0-\alpha^2\Delta u_0)\\
    u|_{\partial D}=0    
\end{cases}
\end{align}
\begin{definition}\label{well posed vorticity non linear}
A stochastic process $q$, which is weakly continuous with values in $L^2(D)$ and continuous with values in $H^{-1}(D)$,
is a weak solution of equation (\ref{vorticity}) if
$$
q \in L^p(\Omega,\mathcal{F},\mathbb{P};L^{\infty}(0,T;L^2(D))),\ \forall p\geq 2.
$$
and $\mathbb{P}-a.s.$ for every $t\in [0,T]$ and $\phi \in H^2_0(D)$  we have
 \begin{align*}
    \langle q(t),\phi\rangle-\langle q_0,\phi\rangle& =\int_0^t\int_D u(s)\cdot \nabla \phi q(s) \ dx ds-\frac{\nu}{\alpha^2}\int_0^t\int_D (q(s)-\operatorname{curl}u(s))\phi\ dx ds\\ & +\sum_{k\in K} \langle s_k, \phi \rangle W^k_t\ \ \mathbb{P}-a.s.
\end{align*} $u=\nabla^{\perp}\varphi,$ $\varphi$ obtained by Lemma \ref{well posed elliptic}, $ u\in W.$
\end{definition}

Let us obtain a result about the equivalence between the solutions of these two problems. Since we know from the results of \cite{razafimandimby2012strong} that problem (\ref{system}) is well-posed, then problem (\ref{vorticity}) is well-posed as well.
\begin{proposition}\label{equivalence}Let $u$ be a solution of (\ref{system}) in the sense of Definition \ref{weak Solution system}, then $q:=\operatorname{curl}(u-\alpha^2\Delta u)$ is a solution of (\ref{vorticity}) in the sense of Definition \ref{well posed vorticity non linear}. Conversely, if $q$ is a solution of (\ref{vorticity}) in the sense of Definition \ref{well posed vorticity non linear} then $u=\nabla^{\perp}\varphi$, $\varphi$ obtained by Lemma \ref{well posed elliptic}, is a solution of (\ref{system}) in the sense of Definition \ref{weak Solution system}.
\end{proposition}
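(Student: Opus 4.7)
The bridge between the two formulations is the 2D stream-function correspondence on the simply-connected domain $D$: a divergence-free $\psi\in H^{k}\cap V$ admits a unique stream function $\rho\in H^{k+1}\cap H^2_0$ with $\psi=\nabla^{\perp}\rho$, and when $u=\nabla^{\perp}\rho$ one has $\operatorname{curl}(u-\alpha^2\Delta u)=\Delta\rho-\alpha^2\Delta^2\rho$. The plan is to use this correspondence to pass test functions between Definition~\ref{weak Solution system} and Definition~\ref{well posed vorticity non linear}, and to verify that each term of one weak formulation converts into the matching term of the other via routine integration by parts.

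\textbf{Forward direction.} Assuming $u$ solves (\ref{system}), the function $q=\operatorname{curl}(u-\alpha^2\Delta u)$ inherits progressive measurability and the required integrability from $u$ via continuity of $\operatorname{curl}(I-\alpha^2\Delta):W\to L^2$. For $\eta\in H^2_0(D)$, approximate $\eta$ in $H^2_0$ by $\eta_n\in C^{\infty}_c(D)$ and test Definition~\ref{weak Solution system} against $\phi_n:=\nabla^{\perp}\eta_n\in D(A)$. Using the $H^3$-regularity of $u$, the vanishing of $u$ and $\nabla\eta_n$ on $\partial D$, and identity (\ref{equvalence hatB and b}) together with the $2$D relations $\operatorname{curl}(v)\times u=q\,u^{\perp}$ and $u^{\perp}\!\cdot\nabla^{\perp}\eta_n=u\cdot\nabla\eta_n$, each term converts as
\begin{align*}
\langle u,(I-\alpha^2A)\phi_n\rangle &= -\langle q,\eta_n\rangle,\\
\langle u, A\phi_n\rangle &= \tfrac{1}{\alpha^2}\langle q-\operatorname{curl} u,\eta_n\rangle,\\
-b(u, u-\alpha^2\Delta u,\phi_n) - \alpha^2 b(\phi_n,\Delta u, u) &= -\int_D q\,u\cdot\nabla\eta_n\,dx,\\
\langle \sigma_k,\phi_n\rangle &= -\langle s_k,\eta_n\rangle.
\end{align*}
Substituting and multiplying by $-1$ yields Definition~\ref{well posed vorticity non linear} with test $\eta_n$. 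Every term is linear and continuous in the test with respect to the $H^2_0$-topology (using $u\in L^{\infty}(0,T;H^3)\hookrightarrow L^{\infty}$, $q\in L^{\infty}(0,T;L^2)$, $s_k\in L^2$), so passing $\eta_n\to\eta$ gives the vorticity identity for $\eta$. Continuity of $q$ in $H^{-1}$ is then read off from the resulting expression, whose drift lies in $L^{\infty}(0,T;H^{-1})$ and whose martingale is continuous.

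\textbf{Reverse direction.} Given $q$ solving (\ref{vorticity}), define $u(t):=\nabla^{\perp}\varphi(t)=\mathbb{K}(q(t))$ via Lemma~\ref{well posed elliptic}. A direct computation gives $\operatorname{curl} u=\Delta\varphi$ and $\operatorname{curl}\Delta u=\Delta^2\varphi$, hence $\operatorname{curl}(u-\alpha^2\Delta u)=q$; continuity of $\mathbb{K}:L^2\to H^3\cap W$ transfers measurability, integrability and weak time continuity from $q$ to $u$. For any $\psi\in D(A)$, simple-connectedness of $D$ combined with $\psi\in V\cap H^2$ produces a unique $\rho\in H^3\cap H^2_0$ with $\psi=\nabla^{\perp}\rho$; plugging $\rho$ into Definition~\ref{well posed vorticity non linear} and reading the four identities above in reverse reconstructs Definition~\ref{weak Solution system} for $\psi$. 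Strong continuity of $u$ in $V$ follows from continuity of $q$ in $H^{-1}$ through the smoothing property of $\mathbb{K}$.

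\textbf{Main obstacle.} No step is genuinely hard. The stochastic structure transfers trivially because the noise is additive and linear in the test function. The only real care needed is systematic sign bookkeeping in the integrations by parts, verifying that no boundary contributions survive (ensured by $u|_{\partial D}=\eta_n|_{\partial D}=\partial_n\eta_n|_{\partial D}=0$), and the standard stream-function construction in the reverse direction, which relies on the simple connectedness of $D$ built into Hypothesis~\ref{General Hypothesis}.
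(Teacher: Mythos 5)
Your proposal is correct and follows essentially the same route as the paper: test the velocity formulation against $\nabla^{\perp}$ of scalar test functions in $H^2_0(D)$ for the forward direction, reconstruct $u=\nabla^{\perp}\varphi$ via Lemma \ref{well posed elliptic} and the stream-function correspondence on the simply connected domain for the converse, and recover strong $V$-continuity of $u$ from the $H^{-1}$-continuity of $q$ through the regularity of the solution map of the elliptic problem. Your extra density step (approximating $\eta\in H^2_0$ by $C^{\infty}_c$ functions so that the test fields genuinely lie in $D(A)$) is a minor refinement of the paper's one-line forward direction, not a different argument.
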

\begin{proof}
\begin{itemize}
    \item$Def.\ref{weak Solution system}\implies Def.\ref{well posed vorticity non linear}$ is immediate taking as test function for problem (\ref{system}) $\phi=-\nabla^{\perp} \Tilde{\phi}$, $\Tilde{\phi}\in H^2_0(D)$.
    \item $Def.\ref{well posed vorticity non linear}\implies Def.\ref{weak Solution system}$ we take $u=\nabla^{\perp}\varphi,\ v=u-\alpha^{2}\Delta u,$ where  $\varphi$ is obtained by Lemma \ref{well posed elliptic} and $\phi=-\nabla^{\perp}\Tilde{\phi},$ where $\Tilde{\phi}\in H^2_0(D)$. Then integrating by parts and exploiting that $\operatorname{curl}\nabla^{\perp}=\Delta$, $\Delta \varphi -\alpha^2\Delta^2\varphi=q$ and $q$ is a solution of (\ref{vorticity}) in the sense of Definition \ref{well posed vorticity non linear} we get
    \begin{align*}
        &-\langle (I-\alpha^2\Delta)u(t),\nabla^{\perp}\Tilde{\phi}\rangle_{L^2}+\langle (I-\alpha^2\Delta)u_0,\nabla^{\perp}\Tilde{\phi}\rangle_{L^2}-\frac{\nu}{\alpha^2}\int_0^t \langle v(s)-u(s),\nabla^{\perp}\Tilde{\phi}\rangle ds\\ &+\int_0^t\int_D (u(s)\cdot \nabla) \nabla^{\perp}\Tilde{\phi}v(s)\ dx ds-\alpha^2 \int_0^t \int_D (\nabla^{\perp}\Tilde{\phi}\cdot\nabla) \Delta u(s)u(s) dxds+\sum_{k\in K}\langle \sigma_k,\nabla^{\perp}{\Tilde{\phi}}\rangle W^k_t=0 \ \ \mathbb{P}-a.s.
    \end{align*} From the last relation the thesis follows if we are able to prove the continuity properties of $u$. The weak continuity of $u$ with values in $W$ follows immediately from the regularity of $q$ and Lemma \ref{well posed elliptic}. Again by Lemma \ref{well posed elliptic} we get the strong continuity of $u$ with values in $V$. In fact, via Lax-Milgram Lemma we get the regularity of the solution mapping of the problem described in Lemma \ref{well posed elliptic}
between $H^{-2}(D)$ and $H^2_0(D)$. Via interpolation techniques we recover the regularity of the solution mapping between $H^{-1}(D)$ and $H^3(D)\cap H^2_0(D)$, therefore the required regularity for $u$.
\end{itemize} 
\end{proof}

Approximating the process $q(t)$ solution of (\ref{vorticity}) by the eigenvectors of the Laplacian with Dirichlet boundary conditions and then arguing as in the first part of the proof of Lemma \ref{thm energy inequality}, we can obtain some It\^o's formula and energy estimates. Moreover, if $u\in V$ we have $\lVert \nabla u\rVert_{L^2}^2=\lVert \operatorname{curl}u\rVert_{L^2}^2$. Thanks to Proposition \ref{equivalence}, we know that $u$ appearing in problem (\ref{vorticity}) is a solution of problem (\ref{system}). Therefore, thanks to Lemma \ref{Ito system} we know that
\begin{align}\label{additive noise H1 bound 1}
   2\nu\int_0^t \mathbb{E}\left[\lVert \nabla u(s)\rVert_{L^2}^2\right]ds\leq  \mathbb{E}\left[\lVert u_0\rVert^2\right]+\alpha^2\mathbb{E}\left[\lVert \nabla u_0\rVert_{L^2}^2\right]+t\sum_{k\in K} \langle \sigma_k,(I-\alpha^2A)^{-1}\sigma_k\rangle, \\
   \label{additive noise H1 bound 2} \alpha^2\mathbb{E}\left[\lVert \nabla u(t)\rVert_{L^2}^2\right]\leq  \mathbb{E}\left[\lVert u_0\rVert^2\right]+\alpha^2\mathbb{E}\left[\lVert \nabla u_0\rVert_{L^2}^2\right]+t\sum_{k\in K} \langle \sigma_k,(I-\alpha^2A)^{-1}\sigma_k\rangle
\end{align}
$$  $$ and we can obtain the following energy relations. 
\begin{Lemma}\label{Ito Vorticity}
Let $q$ be a weak solution of problem (\ref{vorticity}) in the sense of Definition \ref{well posed vorticity non linear}, then the following relations hold true
\begin{enumerate}
\item \begin{align*}
    d\lVert q\rVert^2=-\frac{2\nu}{\alpha^2}\langle q-\operatorname{curl}u,q\rangle dt+\sum_{k\in K}\lVert s_k\rVert^2\ dt +2\sum_{k\in K} \langle s_k,q\rangle dW^k_t
\end{align*}
    \item \begin{align*}
     \mathbb{E}\left[\lVert q(t)\rVert^2\right]&\leq e^{-\frac{\nu}{\alpha^2}t} \mathbb{E}\left[\lVert q_0\rVert^2\right]+\frac{\alpha^2}{\nu}(1-e^{-\frac{\nu t}{\alpha^2}})\sum_{k\in K}\lVert s_k\rVert^2 \\ &+\frac{1}{2\nu}\left(\mathbb{E}\left[\lVert u_0\rVert^2\right]+\alpha^2\mathbb{E}\left[\lVert \nabla u_0\rVert_{L^2}^2\right]+T\sum_{k\in K} \langle \sigma_k,(I-\alpha^2A)^{-1}\sigma_k\rangle \right)
 \end{align*} 
    \item \begin{align*}
\mathbb{E}\left[\operatorname{sup}_{t\in [0,T]}\lVert q(t)\rVert^2\right] &\leq \mathbb{E}\left[\lVert q_0\rVert^2\right] +\sum_{k\in K} \lVert s_k\rVert^2 T\\ &+ C\mathbb{E}\left[\left(\sum_{k\in K}\int_0^T\langle s_k,q(s) \rangle^2ds\right)^{1/2}\right]\\ &+\frac{1}{2\alpha^2}\left(\mathbb{E}\left[\lVert u_0\rVert^2\right]+\alpha^2\mathbb{E}\left[\lVert \nabla u_0\rVert_{L^2}^2\right]+T\sum_{k\in K} \langle \sigma_k,(I-\alpha^2A)^{-1}\sigma_k\rangle\right).
\end{align*}
\end{enumerate}
\end{Lemma}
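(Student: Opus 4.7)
The plan is to run the vorticity analogue of the It\^o calculation that opens the proof of Lemma \ref{thm energy inequality}. Following the remark preceding the statement, I would Galerkin-approximate $q$ along the orthonormal basis $\{\varphi_i\}$ of $L^2(D)$ given by Dirichlet--Laplacian eigenfunctions, set $q^n=\sum_{i=1}^n\langle q,\varphi_i\rangle\varphi_i$, and apply the finite-dimensional It\^o formula to $\lVert q^n(t)\rVert^2=\sum_{i=1}^n\langle q(t),\varphi_i\rangle^2$. The drift contributions reproduce the inner products $-(2\nu/\alpha^2)\langle q-\operatorname{curl}u,q\rangle$ and $-\langle u\cdot\nabla q,q\rangle$, while the noise supplies the martingale $2\sum_{i,k}\langle s_k,\varphi_i\rangle\langle q,\varphi_i\rangle\,dW^k_t$ and the It\^o correction $\sum_{i,k}\langle s_k,\varphi_i\rangle^2\,dt$. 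The transport piece vanishes by the integration-by-parts identity
\begin{align*}
\langle u\cdot\nabla q,q\rangle=\frac{1}{2}\int_D u\cdot\nabla(q^2)\,dx=-\frac{1}{2}\int_D(\operatorname{div}u)\,q^2\,dx+\frac{1}{2}\int_{\partial D}(u\cdot n)\,q^2\,d\sigma=0,
\end{align*}
valid because $u$ is divergence free and $u|_{\partial D}=0$. Passing $n\to+\infty$ via the regularity from Definition \ref{well posed vorticity non linear} together with Parseval's identity $\sum_i\langle s_k,\varphi_i\rangle^2=\lVert s_k\rVert^2$ produces the identity in (1).

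For (2), I would take expectations in (1), killing the martingale and leaving
\begin{align*}
\frac{d}{dt}\mathbb{E}[\lVert q(t)\rVert^2]+\frac{2\nu}{\alpha^2}\mathbb{E}[\lVert q(t)\rVert^2]=\frac{2\nu}{\alpha^2}\mathbb{E}[\langle\operatorname{curl}u(t),q(t)\rangle]+\sum_{k\in K}\lVert s_k\rVert^2.
\end{align*}
Young's inequality $2\langle\operatorname{curl}u,q\rangle\leq\lVert q\rVert^2+\lVert\operatorname{curl}u\rVert^2$ turns this into the linear differential inequality
\begin{align*}
\frac{d}{dt}\mathbb{E}[\lVert q(t)\rVert^2]+\frac{\nu}{\alpha^2}\mathbb{E}[\lVert q(t)\rVert^2]\leq \frac{\nu}{\alpha^2}\mathbb{E}[\lVert\nabla u(t)\rVert_{L^2}^2]+\sum_{k\in K}\lVert s_k\rVert^2,
\end{align*}
whose Duhamel representation with integrating factor $e^{\nu t/\alpha^2}$ generates the exponential decay of the initial datum and the term $\tfrac{\alpha^2}{\nu}(1-e^{-\nu t/\alpha^2})\sum_k\lVert s_k\rVert^2$. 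Bounding $e^{-\nu(t-s)/\alpha^2}\leq 1$ on the remaining convolution and invoking the $H^1$ energy identity \eqref{additive noise H1 bound 1} from Lemma \ref{Ito system} to control $\int_0^t\mathbb{E}[\lVert\nabla u(s)\rVert_{L^2}^2]\,ds$ delivers the stated bound.

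For (3), I would integrate (1) in time, take the supremum for $t\in[0,T]$, and then expectations. The deterministic drift is estimated exactly as in step (2), using the same Young inequality and \eqref{additive noise H1 bound 1}, and the supremum of the continuous local martingale $M_t=2\sum_{k\in K}\int_0^t\langle s_k,q(s)\rangle\,dW^k_s$ is controlled by the Burkholder--Davis--Gundy inequality, yielding the term $C\mathbb{E}\bigl[(\sum_k\int_0^T\langle s_k,q(s)\rangle^2\,ds)^{1/2}\bigr]$. The main obstacle is really confined to step (1), namely the passage to the limit for the truncated transport coupling $\sum_{i\leq n}\langle u\cdot\nabla q,\varphi_i\rangle\langle q,\varphi_i\rangle\to\langle u\cdot\nabla q,q\rangle=0$; this is handled by the uniform bound $q\in L^2(\Omega;L^\infty(0,T;L^2))$ together with $u\in L^\infty(0,T;W)$ granted by Proposition \ref{equivalence}, and beyond it the whole argument is essentially deterministic.
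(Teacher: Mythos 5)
Your proposal is correct and coincides with the paper's own (only sketched) argument: the paper likewise prescribes a Galerkin approximation of $q$ along the Dirichlet--Laplacian eigenfunctions, the It\^o computation from the first part of the proof of Lemma \ref{thm energy inequality} (with the transport term killed by $\operatorname{div}u=0$ and $u|_{\partial D}=0$), and the bounds \eqref{additive noise H1 bound 1}--\eqref{additive noise H1 bound 2} from Lemma \ref{Ito system} to close the estimates, exactly as you do. The only bookkeeping quibble is in part (2): the chain $\frac{\nu}{\alpha^2}\int_0^t\mathbb{E}\left[\lVert\nabla u(s)\rVert_{L^2}^2\right]ds\leq\frac{1}{2\alpha^2}(\cdots)$ that you set up yields the prefactor $\frac{1}{2\alpha^2}$ rather than the stated $\frac{1}{2\nu}$, which is immaterial in the regime $\nu=O(\alpha^2)$ in which the lemma is applied.
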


\begin{remark}\label{enrgy H3}
We can control the $H^3$ norm of $u$ via the $H^1$ norm of $u$ and the $L^2$ norm of $q$ in the following way \begin{align}\label{preliminary H3 additive noise}
    \lVert u(t)\rVert_{H^3}&\leq C\left( \lVert \nabla u(t)\rVert_{L^2} +\lVert \operatorname{curl}\Delta u(t)\rVert_{L^2}\right)\notag\\ &\leq C\left( \frac{\lVert q(t) \rVert}{\alpha^2}+\frac{\lVert \operatorname{curl} u(t)\rVert_{L^2}}{\alpha^2}+\lVert \nabla u(t)\rVert_{L^2}\right)\notag\\ &=C\left(\frac{\lVert q(t) \rVert}{\alpha^2}+\frac{\lVert \nabla u(t)\rVert_{L^2}}{\alpha^2}+\lVert \nabla u(t)\rVert_{L^2}\right). 
\end{align}
Therefore, thanks to Lemma \ref{Ito Vorticity} it holds

\begin{align}\label{ H3 additive 1}
     \mathbb{E}\left[\lVert u(t)\rVert_{H^3}^2\right]&\lesssim \frac{e^{-\frac{\nu}{\alpha^2}t} }{\alpha^4}\mathbb{E}\left[\lVert q_0\rVert^2\right]+\frac{1}{\nu\alpha^2}(1-e^{-\frac{\nu t}{\alpha^2}})\sum_{k\in K}\lVert s_k\rVert^2 \notag\\ &+\left(\frac{1}{\alpha^2}+\frac{1}{\alpha^6}+\frac{1}{\nu\alpha^4}\right)\left(\mathbb{E}\left[\lVert u_0\rVert^2\right]+\alpha^2\mathbb{E}\left[\lVert \nabla u_0\rVert_{L^2}^2\right]+T\sum_{k\in K} \langle \sigma_k,(I-\alpha^2A)^{-1}\sigma_k\rangle \right)
 \end{align} 
\begin{align}\label{ H3 additive 2}
\mathbb{E}\left[\operatorname{sup}_{t\in [0,T]}\lVert u(t)\rVert_{H^3}^2\right] &\lesssim \frac{1}{\alpha^4}\mathbb{E}\left[\lVert q_0\rVert^2\right] +\frac{1}{\alpha^4}\sum_{k\in K} \lVert s_k\rVert^2 T\notag\\ &+ \frac{1}{\alpha^4}\mathbb{E}\left[\left(\sum_{k\in K}\int_0^T\langle s_k,q(s) \rangle^2ds\right)^{1/2}\right]\notag\\ &+\left(\frac{1}{\alpha^6}+\frac{1}{\alpha^4}\right)\left(\mathbb{E}\left[\lVert u_0\rVert^2\right]+\alpha^2\mathbb{E}\left[\lVert \nabla u_0\rVert_{L^2}^2\right]+T\sum_{k\in K} \langle \sigma_k,(I-\alpha^2A)^{-1}\sigma_k\rangle\right)\notag\\
& +\left(\frac{1}{\alpha^6}+\frac{1}{\alpha^4}\right) \left(\sum_{k\in K}\mathbb{E}\left[\int_0^T\langle \sigma_k,u(s) \rangle^2ds\right]^{1/2} \right)
\end{align}

\end{remark}
\begin{remark}
If we consider the scaled equations with $\sqrt{\Tilde{\nu}}$ in front of the noise, then each $\sigma_k$ and $s_k$ is multiplied by $\sqrt{\Tilde{\nu}}$ in Lemmas \ref{Ito system}, \ref{Ito Vorticity} and Remark \ref{enrgy H3}.
\end{remark}
Thanks to Remark \ref{enrgy H3}, if we consider the scaled equation with additive noise and initial condition $u_0^{\alpha}$ satisfying Hypothesis \ref{Hypothesis inviscid limit}, then the following result follows immediately. 
\begin{Lemma}\label{behavior H3 H1 additive}
If we consider the stochastic second-grade fluid equations with additive noise (\ref{system}) scaled by $\sqrt{\Tilde{\nu}}$, under Hypothesis \ref{hypothesis well-posedness}-\ref{Hypothesis inviscid limit}, if $u^{\alpha}$ is the solution in the sense of Definition \ref{weak Solution system} of the problem with initial condition $u_0^{\alpha}$, then

\begin{align*}
 &\mathbb{E}\left[\operatorname{sup}_{t\in [0,T]}\lVert u(t)\rVert^2\right]+\alpha^2\mathbb{E}\left[\operatorname{sup}_{t\in [0,T]}\lVert \nabla u(t)\rVert_{L^2}^2\right]+2\nu\int_0^T \mathbb{E}\left[\lVert \nabla u(s)\rVert_{L^2}^2\right]ds=O(1),\\ 
    &\mathbb{E}\left[\alpha^6\operatorname{sup}_{t\in [0,T]}\lVert u^{\alpha}(t)\rVert_{H^3}^2\right]=O(1).
\end{align*}
\end{Lemma}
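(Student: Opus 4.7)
The plan is to derive Lemma \ref{behavior H3 H1 additive} directly from the energy relations of Lemma \ref{Ito system} and Lemma \ref{Ito Vorticity} (with each coefficient $\sigma_k,\ s_k$ replaced by $\sqrt{\tilde\nu}\sigma_k,\ \sqrt{\tilde\nu}s_k$ to account for the scaled noise), combined with the assumption $\nu,\tilde\nu = O(\alpha^2)$ and the bounds on $u_0^\alpha$ in $L^2,\ H^1$ and $H^3$ provided by Hypothesis \ref{Hypothesis inviscid limit}. The structure of the argument is almost mechanical: each term that appears on the right-hand side of the relevant energy inequalities will either be $O(1)$ by direct substitution of the assumed scalings, or will contain the unknown process itself, in which case we will use Cauchy--Schwarz together with Young's inequality to absorb part of it into the left-hand side.

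For the first estimate I would apply the third item of Lemma \ref{Ito system}, scaled by $\sqrt{\tilde\nu}$, to get
\[
\mathbb{E}\!\left[\sup_{t\in[0,T]}\lVert u^\alpha(t)\rVert^2\right]+\alpha^2\mathbb{E}\!\left[\sup_{t\in[0,T]}\lVert\nabla u^\alpha(t)\rVert_{L^2}^2\right]+2\nu\!\int_0^T\!\mathbb{E}[\lVert\nabla u^\alpha(s)\rVert_{L^2}^2]ds \lesssim A_0+\sqrt{\tilde\nu}\,\mathbb{E}\!\left[\Bigl(\sum_k\!\int_0^T\!\langle\sigma_k,u^\alpha\rangle^2 ds\Bigr)^{\!1/2}\right],
\]
with $A_0:=\mathbb{E}[\lVert u_0^\alpha\rVert^2]+\alpha^2\mathbb{E}[\lVert\nabla u_0^\alpha\rVert_{L^2}^2]+T\tilde\nu\sum_k\langle\sigma_k,(I-\alpha^2A)^{-1}\sigma_k\rangle$. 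Hypothesis \ref{Hypothesis inviscid limit} gives $\mathbb{E}[\lVert u_0^\alpha\rVert^2]=O(1)$ (by \eqref{L^2 conv initial} and $\bar u_0\in H$), $\alpha^2\mathbb{E}[\lVert\nabla u_0^\alpha\rVert^2]=o(1)$, and $\tilde\nu\sum_k\langle\sigma_k,(I-\alpha^2A)^{-1}\sigma_k\rangle\le \tilde\nu\sum_k\lVert\sigma_k\rVert^2=O(\alpha^2)$, hence $A_0=O(1)$. The martingale bracket is bounded via $\sqrt{\tilde\nu}(\sum_k\lVert\sigma_k\rVert^2)^{1/2}T^{1/2}\mathbb{E}[\sup_t\lVert u^\alpha\rVert]$, and a Young inequality $ab\le\tfrac12 a^2+\tfrac12 b^2$ absorbs $\tfrac12\mathbb{E}[\sup_t\lVert u^\alpha\rVert^2]$ into the left-hand side, leaving an $O(\tilde\nu T)=O(1)$ remainder.

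For the second estimate I would invoke Remark \ref{enrgy H3} through the pointwise bound \eqref{preliminary H3 additive noise}, which gives
\[
\alpha^6\lVert u^\alpha(t)\rVert_{H^3}^2 \lesssim \alpha^2\lVert q^\alpha(t)\rVert^2+\alpha^2\lVert\nabla u^\alpha(t)\rVert_{L^2}^2+\alpha^6\lVert\nabla u^\alpha(t)\rVert_{L^2}^2.
\]
The two gradient contributions are already controlled in expected supremum by the first estimate. For the vorticity term I would apply the third item of Lemma \ref{Ito Vorticity} (scaled by $\sqrt{\tilde\nu}$) to get
\[
\alpha^2\mathbb{E}\!\left[\sup_{t\in[0,T]}\lVert q^\alpha(t)\rVert^2\right]\lesssim \alpha^2\mathbb{E}[\lVert q_0^\alpha\rVert^2]+\alpha^2\tilde\nu T\sum_k\lVert s_k\rVert^2+\alpha^2 C\sqrt{\tilde\nu}\,\mathbb{E}\!\Bigl[\bigl(\textstyle\sum_k\int_0^T\langle s_k,q^\alpha\rangle^2 ds\bigr)^{\!1/2}\Bigr]+\tfrac12 A_0.
\]
The initial vorticity is estimated by $\lVert q_0^\alpha\rVert\lesssim \lVert\nabla u_0^\alpha\rVert_{L^2}+\alpha^2\lVert u_0^\alpha\rVert_{H^3}$, so $\alpha^2\mathbb{E}[\lVert q_0^\alpha\rVert^2]\lesssim \alpha^2\mathbb{E}[\lVert\nabla u_0^\alpha\rVert^2]+\alpha^6\mathbb{E}[\lVert u_0^\alpha\rVert_{H^3}^2]=O(1)$ by \eqref{H^1 behav initial}--\eqref{H^3 behav initial}. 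The term $\alpha^2\tilde\nu T\sum_k\lVert s_k\rVert^2$ is $O(\alpha^4)$ since $s_k=\operatorname{curl}\sigma_k\in L^2$. The stochastic term is bounded by $\alpha^2\sqrt{\tilde\nu}(\sum_k\lVert s_k\rVert^2)^{1/2}T^{1/2}\mathbb{E}[\sup_t\lVert q^\alpha\rVert^2]^{1/2}$, and a Young inequality absorbs $\tfrac12\alpha^2\mathbb{E}[\sup_t\lVert q^\alpha\rVert^2]$ into the left-hand side, leaving $C\alpha^2\tilde\nu T\sum_k\lVert s_k\rVert^2=O(\alpha^4)$.

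The main (and really only) delicate point is the absorption argument for the two martingale contributions: because they involve the unknown process and its vorticity evaluated through the noise coefficients, one must make sure the factor $\sqrt{\tilde\nu}$ appearing from the scaling combines with $(\sum_k\lVert\sigma_k\rVert^2)^{1/2}$, respectively $(\sum_k\lVert s_k\rVert^2)^{1/2}$, and $\alpha^2$ to beat the coefficient in front of $\mathbb{E}[\sup_t\lVert u^\alpha\rVert^2]$, respectively $\alpha^2\mathbb{E}[\sup_t\lVert q^\alpha\rVert^2]$. The scaling $\tilde\nu=O(\alpha^2)$ is exactly what makes the Young-type absorption leave an $O(1)$ (in fact $o(1)$ for the stochastic remainder) residual, yielding both estimates simultaneously.
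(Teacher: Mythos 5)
Your proposal is correct and follows essentially the same route as the paper, which simply declares that the lemma ``follows immediately'' from the scaled versions of Lemma \ref{Ito system}, Lemma \ref{Ito Vorticity} and Remark \ref{enrgy H3} together with Hypothesis \ref{Hypothesis inviscid limit}. You merely make explicit the Young-type absorption of the two martingale terms and the bound $\lVert q_0^\alpha\rVert\lesssim\lVert\nabla u_0^\alpha\rVert_{L^2}+\alpha^2\lVert u_0^\alpha\rVert_{H^3}$, details the paper leaves implicit but which are exactly the intended computation.
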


Looking carefully at the proof of Theorem \ref{Main thm}, Lemma \ref{behavior H3 H1 additive} contains the crucial bounds on the norm of the solutions to obtain the inviscid limit. Therefore, following the same ideas of Section \ref{proof of main thm}, one can prove that the inviscid limit holds:
\begin{theorem}\label{Main thm additive noise}
Under Hypotheses \ref{General Hypothesis}-\ref{hypothesis well-posedness}-\ref{Hypothesis inviscid limit}, calling $u^{\alpha}$ the solution of the stochastic second-grade fluid equations with additive noise (\ref{system}) scaled by $\sqrt{\Tilde{\nu}}$ and $\bar{u}$ the solution of (\ref{Euler equations}), then \begin{align*}
    \lim_{\alpha \rightarrow 0}\mathbb{E}\left[\sup_{t\in [0,T]}\lVert u^{\alpha}(t)-\bar{u}(t)\rVert^2\right]= 0.
\end{align*}
\end{theorem}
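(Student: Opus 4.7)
The plan is to mimic the pathwise argument used in the proof of Theorem~\ref{Main thm}, replacing the transport-type Itô contributions by the simpler additive-noise counterparts and using Lemma~\ref{behavior H3 H1 additive} in place of Lemma~\ref{thm energy inequality}. Setting $W^{\alpha}=u^{\alpha}-\bar{u}$ and projecting the weak formulations of \eqref{system} and \eqref{Euler equations} onto the Stokes eigenbasis $\{\tilde{e}_i\}$ from Lemma~\ref{eigenfunction Stokes}, I would apply Itô's formula on finite-dimensional projections and, after letting the projection level tend to infinity using the regularity of $u^{\alpha}$ and $\bar{u}$, obtain an identity of the form
\begin{align*}
\lVert W^{\alpha}(t)\rVert^2+\alpha^2\lVert \nabla u^{\alpha}(t)\rVert_{L^2}^2 = I_1(t)+I_2(t)+I_3(t)+I_4(t)+I_5(t)+I_6(t)+M(t),
\end{align*}
where the deterministic pieces $I_1,\ldots,I_6$ are the exact analogues of those in the proof of Theorem~\ref{Main thm}. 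The only structural differences in this identity are that the martingale reduces to the affine functional
\begin{align*}
M(t)=2\sqrt{\tilde{\nu}}\sum_{k\in K}\int_0^t \langle \sigma_k,W^{\alpha}(s)\rangle\, dW^k_s,
\end{align*}
and that $I_2(t)$ collapses to the deterministic covariance contribution $t\,\tilde{\nu}\sum_{k\in K}\langle \sigma_k,(I-\alpha^2A)^{-1}\sigma_k\rangle$, which by \eqref{norm HD(A)} is bounded by $T\tilde{\nu}\sum_k\lVert \sigma_k\rVert^2=O(\alpha^2)$ and therefore goes to zero under Hypothesis~\ref{Hypothesis inviscid limit}.

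Introducing the Kato boundary-layer corrector $v$ of width $\delta=\delta(\alpha)$ satisfying \eqref{property bl corrector}--\eqref{assumptions delta}, the estimates \eqref{I_1 estimate}, \eqref{I_3 estimate }, \eqref{I_4 estimate}, \eqref{I_5 estimate} and \eqref{I_6 estimate} carry over verbatim, since they rely only on the regularity of $\bar{u}$, the interpolation inequality \eqref{interpolation estimate}, Young's inequality, and pointwise control of $\lVert \nabla u^{\alpha}\rVert_{L^2}$ and $\lVert u^{\alpha}\rVert_{H^3}$. Crucially, Lemma~\ref{behavior H3 H1 additive} supplies exactly the replacements $\mathbb{E}[\alpha^2\sup_t\lVert \nabla u^{\alpha}(t)\rVert_{L^2}^2]=O(1)$ and $\mathbb{E}[\alpha^6\sup_t\lVert u^{\alpha}(t)\rVert_{H^3}^2]=O(1)$ that were provided by Lemma~\ref{thm energy inequality} in the transport-noise setting. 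Plugging these bounds into the pathwise inequality I would obtain the analogue of \eqref{pathwise estimate}, and then the stochastic Grönwall Lemma~\ref{stochastic gronwall } yields
\begin{align*}
\sup_{t\in[0,T]}\mathbb{E}[\lVert W^{\alpha}(t)\rVert^2]+\alpha^2\sup_{t\in[0,T]}\mathbb{E}[\lVert \nabla u^{\alpha}(t)\rVert_{L^2}^2]\to 0,
\end{align*}
thanks to Hypothesis~\ref{Hypothesis inviscid limit} and the estimates \eqref{prefinal 1}--\eqref{prefinal 2}, augmented by the extra term $O(\tilde{\nu})$ coming from the additive covariance.

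To upgrade the convergence to $\mathbb{E}[\sup_t\lVert W^{\alpha}(t)\rVert^2]\to 0$ I would take $\mathbb{E}[\sup_t]$ in the pathwise identity and apply Burkholder--Davis--Gundy to the martingale: using $\lVert G^k(W^\alpha)\rVert$ replaced by $\sqrt{\tilde{\nu}}\lVert \sigma_k\rVert_{L^\infty}\lVert W^{\alpha}\rVert$,
\begin{align*}
\mathbb{E}\Big[\sup_{t\in[0,T]}|M(t)|\Big]\lesssim \sqrt{\tilde{\nu}}\Big(\sum_{k\in K}\lVert \sigma_k\rVert^2_{L^\infty}\Big)^{1/2}\mathbb{E}\Big[\int_0^T \lVert W^{\alpha}(s)\rVert^2\, ds\Big]^{1/2},
\end{align*}
which vanishes by the already-proven $L^2$ convergence and Fubini, mirroring the final step of the proof of Theorem~\ref{Main thm}. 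The main technical obstacle, as in the transport case, is keeping track of the $\alpha$-scalings in $I_4$ and $I_6$ and balancing the choice of $\delta$ against the factors $\nu=O(\alpha^2)$, $\tilde{\nu}=O(\alpha^2)$, $\alpha^6\lVert u^{\alpha}\rVert_{H^3}^2=O(1)$; the novelty relative to the transport case is merely to verify that the additive covariance term $\tilde{\nu}\sum_k\langle \sigma_k,(I-\alpha^2A)^{-1}\sigma_k\rangle$ is $o(1)$ and can be absorbed among the vanishing boundary-layer errors, which it is thanks to $\tilde{\nu}=O(\alpha^2)$.
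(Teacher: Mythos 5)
Your proposal is correct and follows essentially the same route as the paper: the paper itself only sketches this proof, observing that Lemma \ref{behavior H3 H1 additive} supplies the same $H^1$ and $H^3$ bounds that Lemma \ref{thm energy inequality} provided in the transport case, and then invoking the argument of Section \ref{proof of main thm} verbatim. Your filling-in of the details --- the simplified martingale, the collapse of $I_2$ to the deterministic covariance $t\,\Tilde{\nu}\sum_{k}\langle \sigma_k,(I-\alpha^2A)^{-1}\sigma_k\rangle=O(\alpha^2)$, and the unchanged treatment of $I_1,I_3,\dots,I_6$ --- is exactly what the paper intends.
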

\section*{Acknowledgement}
I want to thank Professor Franco Flandoli and Professor Edriss Titi for useful discussions and valuable insights into the subject.
\section*{Conflict of Interest}
The author declares no conflict of interest.

  \bibliography{demo}
  \bibliographystyle{abbrv}

\end{document}